\newtheorem{theoremletter}{Theorem}
\newtheorem{corollary}{Corollary}
\newtheorem{definition}{Definition}
\newtheorem{lemma}{Lemma}
\newtheorem{proposition}{Proposition}
\newtheorem{remark}{Remark}
\newtheorem{theorem}{Theorem}
\newtheorem{example}{Example}
\numberwithin{equation}{section}
\begin{document}
	
	\title[Geometric Inequalities for electrostatic systems]{Geometric inequalities for\\ electrostatic systems with boundary}

	\author{Allan Freitas}
	\address[A. Freitas]{Universidade Federal da Para\'iba - UFPB, Departamento de Matem\'{a}tica, Jo\~ao Pessoa - PB, 58051-900, Brazil} 
	\curraddr{}
	\email{allan@mat.ufpb.br \& allan.freitas@academico.ufpb.br} 
	\thanks{A. Freitas was partially supported by CNPq/Brazil Grant 308141/2025-3.}
	
	\author{Benedito Leandro}
	\address[B. Leandro]{Universidade de Bras\'ilia - UNB, Departamento de Matem\'{a}tica, Bras\'ilia - DF, 70910-900, Brazil} 
	\curraddr{}
	\email{bleandrone@mat.unb.br} 
	\thanks{B. Leandro was partially supported by CNPq/Brazil Grant 303157/2022-4 and FAPDF - 00193-00001678/2024-39.}
	
	\author{Ernani Ribeiro Jr$^{\star}$}
	\address[E. Ribeiro Jr]{Universidade Federal do Cear\'a - UFC, Departamento  de Matem\'atica, Campus do Pici, Av. Humberto Monte, Bloco 914, 60455-760, Fortaleza - CE, Brazil}
	\curraddr{}
	\email{ernani@mat.ufc.br} 
	\thanks{E. Ribeiro Jr was partially supported by CNPq/Brazil Grant 305128/2025-6, CAPES/Brazil and FUNCAP/Brazil ITR-0214-00116.01.00/23.}
	
	\author{Guilherme Sabo}
	\address[G. Sabo]{Universidade de Bras\'ilia - UNB, Departamento de Matem\'{a}tica, Bras\'ilia - DF, 70910-900, Brazil}
	\curraddr{}
	\email{guipaes92@gmail.com}

	\thanks{$^{\star}$Corresponding Author: Ernani Ribeiro Jr (ernani@mat.ufc.br)}

\keywords{Electrostatic system; static spaces; geometric inequalities; boundary estimates}  \subjclass[2020]{Primary 53C25, 53C21, 53Z05}

%\date{September 19, 2025}
	
	\dedicatory{}
	
	\begin{abstract}
In this article, we investigate electrostatic systems with a nonzero cosmological constant on compact manifolds with boundary. We establish new geometric pro\-per\-ties for electrostatic manifolds in higher dimensions, extending previous results in the literature. Moreover, we prove sharp boundary estimates and isoperimetric-type inequa\-li\-ties for electrostatic manifolds, as well as volume and boundary inequalities involving the Brown-York and Hawking masses.
	\end{abstract}
	
	\maketitle
	\tableofcontents
	\section{Introduction}

The Einstein-Maxwell equations with cosmological constant $\Lambda$ on a Lorentzian manifold $(\widehat{M}^{n+1},\,\hat{g})$ are given by the following system:
	\begin{equation}
		\left\{\begin{array}{rcll}
			&&{Ric}_{\hat{g}}-\frac{{R}_{\hat{g}}}{2}\hat{g} + \Lambda\hat{g} = 2\left(F\circ F -\frac{1}{4}|F|^2_{\hat{g}}\hat{g}\right);\nonumber\\\\
			&&dF=0\quad\mbox{and}\quad div_{\hat{g}} F=0,\nonumber
		\end{array}\right.
	\end{equation}
	where $F$ stands for the (Faraday) electromagnetic $(0,\,2)$-tensor and $(F\circ F)_{\alpha\beta} = \hat{g}^{\sigma\gamma}F_{\alpha\sigma}F_{\beta\gamma},$ with  Greek indices ranging from $1$ to $n+1$. A static space-time is a product manifold $\widehat{M}^{n+1}=\mathbb{R}\times M^{n},$ endowed with the metric $\hat{g}=-f^2dt^2+g$, where $(M^n,\,g)$ is an oriented $n$-dimensional Riemannian manifold and $f$ is a positive smooth function on $M^n.$ In particular, by choosing the electromagnetic field in the form $F=fE^{\flat}\wedge dt,$ where $E^{\flat}$ denotes the dual $1$-form to the electric field $E$, the Einstein-Maxwell system reduces to a set of equations adapted to the electrostatic setting (cf. \cite{chrusciel2017non,tiarlos,leandro2024}).
	
	\begin{definition}\label{def1}
		The Einstein-Maxwell equations with cosmological constant $\Lambda$ for the electrostatic space-time associated to $(M^n,\, g,\, f,\,E)$ are given by
\begin{equation}\label{eq1}
\left\{
\begin{aligned}
\nabla^2 f &= f \left( Ric - \dfrac{2}{n-1} \Lambda g + 2 E^\flat \otimes E^\flat - \dfrac{2}{n-1} |E|^2 g \right), \\\\
\Delta f &= \dfrac{2}{n-1} \left( (n-2)|E|^2 - \Lambda \right) f, \\\\
0 &= div(E) \quad \text{and} \quad 0 = d(f E^\flat).
\end{aligned}
\right.
\end{equation}
We say that $(M^n,\, g,\, f,\,E)$ is an electrostatic system if the system of equations \eqref{eq1} is satisfied for some constant $\Lambda\in\mathbb{R}.$ In particular, if $(M^n,\,g)$ is complete, then the electrostatic system is also complete.
	\end{definition}
	
We remark that \( Ric \), \( \nabla^2 \), \(div\), and \( \Delta \) denote the Ricci tensor, the Hessian tensor, the divergence, and the Laplacian with respect to the metric \( g \), respectively. The smooth function $f$ is referred to as the {\it lapse function} (or electrostatic potential), and $E$ denotes the electric field. If $M$ has a horizon boundary $\partial M$, we additionally assume that $f^{-1}(0)=\partial M$ (cf. \cite{cederbaum2016uniqueness, chrusciel2017non,tiarlos}). An electrostatic system with zero cosmological constant is referred to as an electro\-vacuum system. In the absence of an electric field $E,$ the system reduces to a vacuum static system. Among the explicit examples of electrostatic spacetimes, we highlight the vacuum static models (namely, the de Sitter and Nariai systems, see Example \ref{Exa1}) as well as the Reissner-Nordstr\"om-de Sitter (RNdS), Majumdar-Papapetrou (MP), charged Nariai, cold black hole and ultracold black hole solutions; further details are provided in Section 2.2.

To proceed, we recall that the charge \( Q \) of a hypersurface \( \Sigma \) in \( (M^n, g) \) is given by
\begin{equation} \label{charge}
Q(\Sigma) = \frac{1}{\omega_{n-1}} \sqrt{\frac{2}{(n-1)(n-2)}} \int_{\Sigma} \langle E, \nu \rangle \, dA_g,
\end{equation}
where \( \omega_{n-1} \) denotes the area of the standard unit \((n-1)\)-sphere, and \( \nu \) is the unit normal vector to \( \Sigma \). Moreover, the classification problem for electro-vacuum spacetimes can be formulated as follows: assume that $Q_iQ_j \geq 0,$ for all $\,i,\,j,$ where $Q_i$
denotes the charge of the $i$-th connected degenerate component of an electrically charged black hole. Under this condition, the black hole must be either a Reissner–Nordstr\"om (RN) or a Majumdar-Papapetrou (MP) black hole. Several important contributions to the classification of electro-vacuum spacetimes can be found in the literature; see, for example, \cite{cederbaum2016uniqueness,chrusciel1999,chrusciel2007, kunduri2018, leandro2023, Lucietti}, and the references therein. In particular, by the positive mass theorem, any asymptotically flat electro-vacuum spacetime is conformally flat. That is, the Cotton tensor $C$ vanishes in three dimensions, and the Weyl tensor $W$ vanishes in higher dimensions. This, in turn, implies that the only admissible solutions to the electro-vacuum system under such conditions are precisely the Reissner-Nordstr\"om and Majumdar–Papapetrou spacetimes (cf. \cite[Theorem 3.6]{chrusciel1999}).

In this article, we investigate whether the de Sitter system is the unique compact, simply connected electrostatic system with positive scalar curvature. This question, originally posed in the context of vacuum static spaces, has been extensively studied within general relativity and is closely related to the {\it Cosmic No-Hair Conjecture}, formulated by Boucher, Gibbons, and Horowitz \cite{boucher1} (see also \cite{boucher}):  ``{\it The only compact vacuum static system \((M^n, g, f)\) with positive scalar curvature and connected boundary is the de Sitter system, with static potential \(f\) given by the height function}''. While several results have confirmed the conjecture under additional assumptions (see, for example, \cite{ambrozio,bouch,boucher1,fried,HY,koba, laf,reilly}), counterexamples were constructed by Gibbons, Hartnoll and Pope \cite{ghp} in dimensions \( 4 \leq n \leq 8 \), and a simply connected counterexample in all dimensions \( n \geq 4 \) was later presented by Costa, Di\'ogenes, Pinheiro e Ribeiro \cite{costa}. To the best of our knowledge, the conjecture is still open for dimension \( n = 3 \).

A fruitful approach to investigating rigidity phenomena is through the establishment of obstruction results, such as geometric inequalities, which serve to obtain classification results as well as rule out some potential new examples. In this context, a distinguished result, by Boucher, Gibbons and Horowitz \cite{boucher1} and Shen \cite{sh}, asserts that the de Sitter system maximizes the boundary area among all compact vacuum static spaces with positive scalar curvature and connected boundary. More precisely, they proved the following:

\begin{quote}
\textit{Let \((M^3, g, f)\) be a compact, oriented vacuum static space with connected boundary and scalar curvature equal to \(6\). Then the area of \(\partial M\) satisfies the inequality
\[
|\partial M| \leq 4\pi.
\]
Moreover, equality holds if and only if \((M^3, g)\) is isometric to the de Sitter system.}
\end{quote} In recent years, also motivated by the classical isoperimetric inequality, the study of boun\-dary and volume estimates for special classes of manifolds (or metrics) has made significant advances. Such estimates have been established for static spaces in, e.g., \cite{ambrozio,BMaz,costa,HMR}, as well as for critical metrics of the volume functional in, e.g., \cite{BDR1,BLF,BSilva,Batista,CEM,FY,Y2023}; see also \cite{ernani,ernani2}. Important related inequalities involving charge can be found in \cite{Jaracz,MKhuri}. In this spirit, by employing the recent genera\-lized Reilly formula due to Qiu and Xia \cite{qiu} (see also \cite{LiXia}), we establish a sharp boundary estimate for compact electrostatic manifolds. More precisely, we obtain the following result.

\begin{theoremletter}
\label{main}
Let $(M^n,\, g,\, f,\,E)$ be a compact electrostatic system with connected boundary $\partial M$ satisfying $|E|^2\leq\frac{\Lambda}{(n-2)}.$ Then we have:
\begin{equation}
\label{plmnbv5}
\sqrt{\frac{\alpha(n+2)}{2n\beta^{2}}}|\partial M|\leq  Vol(M),
\end{equation}
where $ \alpha = \min_{M}\dfrac{2}{n - 1}\left(\Lambda - (n - 2)|E|^2\right)$ and $ \beta = \max_{M}\dfrac{2}{n - 1}\left(\Lambda - (n - 2)|E|^2\right)$. Moreover, equality holds in (\ref{plmnbv5}) if and only if $(M^n,\, g,\, f)$ is isometric to the de Sitter system.
\end{theoremletter}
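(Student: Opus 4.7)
My plan is to apply the generalized Reilly identity of Qiu--Xia \cite{qiu} with the substatic weight $V=f$ together with a carefully chosen auxiliary function $u$. The first step is to recast~\eqref{eq1} in substatic form: combining the Hessian equation with $\Delta f=-\mu f$, where $\mu=\tfrac{2}{n-1}(\Lambda-(n-2)|E|^{2})\in[\alpha,\beta]$, a short calculation yields
\[
f\,Ric-\nabla^{2}f+(\Delta f)\,g \;=\; 2f\bigl(|E|^{2}g-E^{\flat}\otimes E^{\flat}\bigr),
\]
whose associated quadratic form is pointwise nonnegative by the elementary Cauchy--Schwarz $\langle E,X\rangle^{2}\le|E|^{2}|X|^{2}$. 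Under the hypothesis $|E|^{2}\le \Lambda/(n-2)$ one also has $\mu\ge 0$, so $\Delta f\le 0$ and the strong maximum principle forces $f>0$ in $M\setminus\partial M$.

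Next I would carry out the boundary analysis. Since $f|_{\partial M}=0$, the Hessian equation forces $\nabla^{2}f\equiv 0$ on $\partial M$; evaluating on tangential directions and using $\nabla^{T}f=0$ then implies that $\partial M$ is totally geodesic, and in particular the mean curvature $H$ vanishes identically on $\partial M$. This kills the $H u_{\nu}^{2}$ term in the Reilly boundary integrand, reducing the surface contribution to an expression in $u_{\nu}$ and $f_{\nu}$ alone.

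The third step is the choice of $u$ and the extraction of the estimate. I would take $u$ to be the solution of the Poisson--Dirichlet problem $\Delta u=1$ in $M$ with $u|_{\partial M}=0$, so that $\int_{\partial M}u_{\nu}\,dA = Vol(M)$ and the orthogonal decomposition $|\nabla^{2}u|^{2}=(\Delta u)^{2}/n+|\nabla^{2}u-(\Delta u/n)g|^{2}$ gives the dimensional defect $(n-1)/n$. Inserting $u$ into Qiu--Xia's identity, the substatic bulk term is nonnegative and can be discarded, leaving an inequality of the schematic form $\int_{M}f\,dv \ge C_{1}(n)\int_{\partial M}f_{\nu}u_{\nu}\,dA$. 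A final Cauchy--Schwarz
\[
Vol(M)^{2}=\Bigl(\int_{\partial M}u_{\nu}\,dA\Bigr)^{2}\le |\partial M|\int_{\partial M}u_{\nu}^{2}\,dA,
\]
combined with the pointwise estimate $\alpha f\le -\Delta f\le \beta f$ and careful bookkeeping of the dimensional constants coming from the traceless Hessian of $u$, then yields the stated coefficient $\sqrt{\alpha(n+2)/(2n\beta^{2})}$.

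The most delicate step --- and the one I expect to be the main obstacle --- is the rigidity statement. Equality forces simultaneous saturation of (i) the Hessian decomposition, so $\nabla^{2}u=(1/n)g$; (ii) the substatic quadratic form, which should force $E$ to be parallel to $\nabla u$ and ultimately $|E|\equiv 0$; and (iii) the boundary Cauchy--Schwarz, so $u_{\nu}$ is constant on $\partial M$. Each is routine individually, but combining them into an Obata-type argument that identifies $(M,g,f)$ with the de Sitter system is subtle: one must rule out equality with a genuine nonvanishing electric field (so that $\alpha=\beta$ is forced), and then integrate the resulting Hessian equation for $u$ to reconstruct the round hemisphere together with its height function $f$.
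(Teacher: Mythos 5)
Your overall strategy (generalized Reilly identity of Qiu--Xia with weight $f$, substatic structure, Cauchy--Schwarz at the end) is in the right family, and your boundary analysis (totally geodesic $\partial M$, $f>0$ in the interior) is correct. However, the central step fails. In the Qiu--Xia identity as used here, \emph{every} boundary term carries a factor of either $f$ or a tangential derivative of $u$ or $u$ itself: the first boundary integral is weighted by $f$, which vanishes on $\partial M$, and the second is $\int_{\partial M}\partial_\nu f\,\bigl[|\nabla_{\partial M}u|^2-(n-1)ku^2\bigr]$. With your choice $\Delta u=1$, $u|_{\partial M}=0$, both boundary integrals vanish identically, so the identity degenerates into a purely interior relation in which neither $|\partial M|$ nor $\int_{\partial M}u_\nu=\mathrm{Vol}(M)$ ever appears; the schematic inequality $\int_M f\,dV_g\ge C_1\int_{\partial M}f_\nu u_\nu\,dA_g$ cannot be extracted, and there is no quantity left on which to apply your final Cauchy--Schwarz. (Even if a boundary term survived, note that $\mathrm{Vol}(M)^2\le|\partial M|\int_{\partial M}u_\nu^2$ bounds $|\partial M|$ from \emph{below} by a multiple of $\mathrm{Vol}(M)^2$, which is the wrong direction for \eqref{plmnbv5}.) A second, independent problem: the bulk tensor in the Qiu--Xia formula is $\nabla^2f-(\Delta f)g+fRic$, not the substatic tensor $fRic-\nabla^2f+(\Delta f)g$ of \eqref{sub_static}; for an electrostatic system it equals $2f\bigl(Ric+E^\flat\otimes E^\flat-|E|^2g\bigr)$, which is not sign-definite without curvature control, so it cannot simply be ``discarded.''

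The paper's proof resolves both issues by taking $u=f$ itself. Then the problematic term $2f\,Ric(\nabla f,\nabla f)$ is converted via Bochner's formula, and the boundary area enters not through the Reilly boundary integrals (which still vanish) but through Green's identity $\int_M\bigl(f\Delta|\nabla f|^2-|\nabla f|^2\Delta f\bigr)dV_g=\kappa^3|\partial M|$. This yields $0\le \kappa^3|\partial M|-\tfrac{\alpha(n+2)}{n}\int_Mf|\nabla f|^2dV_g$, and the volume is brought in afterwards by two applications of H\"older's inequality to $\int_M\Delta f\,dV_g=-\kappa|\partial M|$ together with the pointwise bounds $\alpha f\le-\Delta f\le\beta f$, giving $\tfrac{(\kappa|\partial M|)^3}{2\beta^2\mathrm{Vol}(M)^2}\le\int_Mf|\nabla f|^2dV_g$. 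If you want to salvage your route, you must either choose $u$ with nontrivial boundary data or, as in the paper, generate the boundary term from a Bochner/Green argument rather than from the Reilly boundary integrals.
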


\begin{remark}
We remark that the conclusion of Theorem \ref{main} remains valid even when the boundary is disconnected, provided that the surface gravities \( \kappa_i := |\nabla f| \big|_{\Sigma_i} \), where \( \Sigma_i \) denotes the connected components of \( \partial M \), are all equal to the same constant (see Theorem~\ref{teoAgen}).
\end{remark}

\begin{remark}
\label{remNew}
From the physical viewpoint, the pointwise assumption $|E|^2 \leq \frac{\Lambda}{n-2}$ requires the electromagnetic field strength to be extremely small, far smaller than any electric field that arises in natural settings. In particular, the inequality forces the electromagnetic energy density to be no larger than the energy density associated with the cosmological constant. However, the latter is known to be very small, whereas ordinary electromagnetic fields, even exceptionally weak ones, produce energy densities many orders of magnitude greater; see \cite{Barrow}. Consequently, this condition should be regarded as a mathematically convenient geometric restriction rather than a physically realistic assumption.
\end{remark}

Our next result may be viewed as an analogue of a Chru\'sciel-type inequality in the setting of electrostatic systems. For motivating examples and a broader context in which such inequalities arise, we refer the reader to \cite[p.~13]{Crus} (see also \cite{HMR}). 

To establish our result, we impose the condition that the electric field \( E \) is  parallel to the gradient of the lapse function \( f \). This assumption appears to be necessary even in the three-dimensional case treated by Cruz, Lima, and Sousa in \cite[Theorem E]{tiarlos}; see Remark \ref{remarkNEW} and Lemma~\ref{theo0} in Section \ref{Sec2}. In particular, the next result addresses this missing step in their argument. Specifically, we prove the following sharp inequality.

\begin{theoremletter}
\label{Crusc_th}
		Let $(M^n,\, g,\, f,\, E)$ be a compact electrostatic system such that $E$ is parallel to $\nabla f$ and $\partial M=\cup_{i=1}^{l} \Sigma_{i}$, where $\Sigma_{i}$ are the connected components of $\partial M.$ Suppose that $\vert E\vert^{2}< \frac{\Lambda}{(n-2)}.$ Then we have:
		\begin{equation}\label{Crusc_eq}
			\frac{1}{2}(n-1)(n-2)\omega^{2}_{n-1}\sum_{i=1}^{l}  \frac{\kappa_{i}Q(\Sigma_i)^2}{\vert\Sigma_i\vert} +\frac{n-2}{n}\Lambda\sum_{i=1}^{l} \kappa_{i}|\Sigma_i|\leq\sum_{i=1}^{l} \kappa_{i} \int_{\Sigma_{i}}  \frac{R^{\Sigma_{i}}}{2}dA_g ,
		\end{equation}
		where $\kappa_{i}=|\nabla f|\Big|_{\Sigma_{i}}$ and $\omega_{n-1}$ is the area of the standard $(n-1)$-sphere. Here, $R^{\Sigma_{i}}$ stands for the scalar curvature of $\Sigma_{i}$. Moreover, equality holds in \eqref{Crusc_eq} if and only if $(M^n,\,g,\,f)$ is isometric to the de Sitter system.  
\end{theoremletter}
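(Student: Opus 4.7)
The plan is to reduce \eqref{Crusc_eq} to a Pohozaev-type identity derived from the trace-free part of the hessian equation in \eqref{eq1}, combined with a single Cauchy--Schwarz estimate for the charge. Since $f$ vanishes on $\partial M$, the first equation of \eqref{eq1} gives $\nabla^{2}f|_{\partial M}=0$; hence each $\Sigma_{i}$ is totally geodesic and $\kappa_{i}=|\nabla f|$ is constant on each component. Tracing \eqref{eq1} and comparing with the Laplacian equation yields the global identity $R=2\Lambda+2|E|^{2}$, and the Gauss equation then reduces to $R^{\Sigma_i}=R-2Ric(\nu,\nu)$. Combining these, and writing $\phi_{i}=\langle E,\nu\rangle|_{\Sigma_i}$ (well-defined because $E\parallel\nabla f$ makes $E$ normal at $\partial M$), I obtain
\begin{equation*}
\sum_i\kappa_i\int_{\Sigma_i}\tfrac{R^{\Sigma_i}}{2}\,dA=\Lambda\sum_i\kappa_i|\Sigma_i|+\sum_i\kappa_i\int_{\Sigma_i}\phi_i^{2}\,dA-\sum_i\kappa_i\int_{\Sigma_i}Ric(\nu,\nu)\,dA.
\end{equation*}
A Cauchy--Schwarz estimate on each $\Sigma_i$ then yields $\int_{\Sigma_i}\phi_i^{2}\,dA\ge\tfrac{(n-1)(n-2)}{2}\omega_{n-1}^{2}\,Q(\Sigma_i)^{2}/|\Sigma_i|$, which produces exactly the charge term of \eqref{Crusc_eq}. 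So the proof reduces to showing $\sum_i\kappa_i\int_{\Sigma_i}Ric(\nu,\nu)\,dA\le\tfrac{2\Lambda}{n}\sum_i\kappa_i|\Sigma_i|$.

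For this central estimate I would introduce the trace-free tensor $B:=A-\tfrac{1}{n}(\text{tr}\,A)g$, where $A$ denotes the right-hand side of the first equation in \eqref{eq1}, so that $\nabla^{2}f-\tfrac{\Delta f}{n}g=fB$. Using the contracted second Bianchi identity, the constraint $div(E)=0$, and the symmetry of $\nabla E^{\flat}$ (which follows from $dE^{\flat}=0$, a consequence of $d(fE^{\flat})=0$ together with the parallelism $E\parallel\nabla f$, as in Lemma~\ref{theo0}), one computes $div(E^{\flat}\otimes E^{\flat})=\tfrac{1}{2}d|E|^{2}$ and consequently $div\,B=\tfrac{2(n-2)}{n}d|E|^{2}$. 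Pairing $fB=\nabla^{2}f-\tfrac{\Delta f}{n}g$ with $B$, integrating over $M$, and integrating by parts produces boundary contributions of $B(\nu,\nu)$ on each $\Sigma_i$ and a bulk term involving $div\,B$; a further integration by parts on $\int_M\langle\nabla f,\nabla|E|^{2}\rangle$ using the formula for $\Delta f$ then yields the master identity
\begin{equation*}
\sum_i\kappa_i\int_{\Sigma_i}Ric(\nu,\nu)\,dA-\tfrac{2\Lambda}{n}\sum_i\kappa_i|\Sigma_i|=-\tfrac{4(n-2)}{n(n-1)}\int_M|E|^{2}\bigl(\Lambda-(n-2)|E|^{2}\bigr)f\,dV-\int_M f|B|^{2}\,dV,
\end{equation*}
whose right-hand side is non-positive under the hypothesis $|E|^{2}\le\Lambda/(n-2)$.

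For the rigidity statement, equality in \eqref{Crusc_eq} forces equality at every step: the bulk term $\int_M f|B|^{2}$ must vanish, hence $B\equiv 0$ on $M$; the strict inequality $|E|^{2}<\Lambda/(n-2)$ combined with the vanishing of the error integral forces $E\equiv 0$; and the Cauchy--Schwarz step is then automatic. With $E\equiv 0$ and $B\equiv 0$ the manifold is Einstein and $\nabla^{2}f=\tfrac{\Delta f}{n}g$, a configuration that classically characterizes the de Sitter system. The main obstacle I anticipate is the computation of $div\,B$: rewriting $div(E^{\flat}\otimes E^{\flat})$ as $\tfrac{1}{2}d|E|^{2}$ is not a purely algebraic identity in $E$ but requires $\nabla E^{\flat}$ to be symmetric, which is exactly the step the parallelism hypothesis $E\parallel\nabla f$ (through Lemma~\ref{theo0}) is designed to repair in the three-dimensional Cruz--Lima--Sousa argument from \cite{tiarlos}.
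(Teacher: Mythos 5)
Your proposal is correct and follows essentially the same route as the paper: your tensor $B$ is exactly the traceless part $\mathring{T}$ of the paper's divergence-free tensor $T=Ric-\frac{R}{2}g+2E^{\flat}\otimes E^{\flat}-|E|^{2}g$, and pairing $fB=\mathring{\nabla}^2f$ with $B$ and integrating by parts twice reproduces, term by term, the paper's application of the generalized Pohozaev--Schoen identity (Theorem~\ref{Poho_gen}) with $X=\nabla f$, including the same Cauchy--Schwarz step for the charge, the same use of the Gauss equation, and the same rigidity argument. The only cosmetic difference is that you perform the integration by parts directly (using $dE^{\flat}=0$, which does follow from $d(fE^{\flat})=df\wedge E^{\flat}+f\,dE^{\flat}$ together with $E\parallel\nabla f$ and $f>0$ in the interior) rather than invoking Lemma~\ref{theo0} and Theorem~\ref{Poho_gen} as black boxes; your master identity and the computation $div\,B=\tfrac{2(n-2)}{n}d|E|^{2}$ both check out against the paper's.
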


\begin{remark}
We emphasize that Theorem \ref{Crusc_th} can be regarded as a higher-dimensional gene\-ra\-li\-zation of \cite[Theorem~E]{tiarlos}. The condition that \(E\) is parallel to \(\nabla f\) always holds along \(\partial M\); see assertion $(iv)$ in Proposition \ref{properties}. Moreover, this property is satisfied in all known explicit examples of electrostatic systems, as discussed in Section \ref{Sec2}.
\end{remark}

\begin{remark}\label{rem_n=3}
The case \(n = 3\) is of particular interest. In this specific dimension, the boundary \(\partial M = \bigcup_{i=1}^{\ell} \Sigma_{i}\) consists of closed surfaces, and the scalar curvature of each component satisfies \(R^{\Sigma_i} = 2K\), where \(K\) denotes the Gauss curvature. Consequently, by the Gauss–Bonnet theorem, we obtain the following area estimate:

\begin{equation*}
\sum_{i=1}^{\ell} k_i \left( \dfrac{16\pi^2 Q(\Sigma_i)^2}{|\Sigma_i|} + \dfrac{\Lambda}{3} |\Sigma_i| \right) \leq 4\pi \sum_{i=1}^{\ell} k_i,
\end{equation*} which coincides precisely with \cite[Theorem E]{tiarlos}. Furthermore, if the boundary is assumed to be Einstein, we derive a related boundary estimate; see Theorem~\ref{alan1}. For a significant topological characterization in the five-dimensional setting, see Corollary~\ref{coro2alan}.
\end{remark}

In the sequel, we relax the assumption that 
\( E \) is parallel to \( \nabla f \), which was required in the previous theorem. To this end, however, we must impose a control  on the electric field. Our approach is based on an integral identity for electrostatic systems, which is of independent interest; see Proposition~\ref{propprop}. Although an analogous result holds in higher dimensions (see Theorem~\ref{geometricineq}), we state the three-dimensional case separately to allow a direct comparison with \cite[Theorem~D]{tiarlos}.

	\begin{theoremletter}\label{livreE}
	Let \( (M^{3},\,g,\,f,\,E) \) be a compact electrostatic system with connected boundary \( \partial M \) and positive cosmological constant. Suppose that
	\[
	\vert E\vert^{2} <  \frac{\sqrt{5}}{5} \Lambda .
	\]
Then $\partial M$ is a $2$-sphere and there exists a positive constant $c$ such that
	\begin{equation}
		\label{eqkl901p}
		\kappa c\vert\partial M\vert \leq 4\pi\Lambda,
	\end{equation} where \( \kappa = |\nabla f| \big|_{\partial M} \). Moreover, equality holds in (\ref{eqkl901p}) if and only if \( (M^{3},\,g,\,f) \) is isometric to the de Sitter system with $c=\Lambda$ and $\kappa = \dfrac{\Lambda}{3}$.
\end{theoremletter}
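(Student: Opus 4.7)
My plan is to derive the result as the three-dimensional instance of the higher-dimensional inequality referenced as Theorem~\ref{geometricineq}, combining the integral identity promised in Proposition~\ref{propprop} with the Gauss--Bonnet theorem on the boundary, and using the hypothesis $|E|^{2}<\tfrac{\sqrt{5}}{5}\Lambda$ to control the error terms that arise when one does not assume that $E$ is parallel to $\nabla f$.

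First, I would record the standard boundary consequences of \eqref{eq1}. Since $f\equiv 0$ on $\partial M$ and the first equation of \eqref{eq1} contains $f$ as an overall factor, one has $\nabla^{2}f\equiv 0$ along $\partial M$. This yields three facts at once: $\partial M$ is totally geodesic, $\kappa=|\nabla f|$ is a positive constant along the connected boundary, and (via the Gauss equation together with the trace of the static system, which gives $R=2\Lambda+2|E|^{2}$) the boundary scalar curvature satisfies
\begin{equation*}
R^{\partial M}=R-2\,\mathrm{Ric}(\nu,\nu),\qquad \nu=\nabla f/\kappa.
\end{equation*}
Evaluating the static equation in the $(\nu,\nu)$-direction then expresses $\mathrm{Ric}(\nu,\nu)$ on $\partial M$ in terms of $\Lambda$, $|E|^{2}$ and $\langle E,\nu\rangle^{2}$, which are the quantities that will later be fed into the integral identity.

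Next, I would invoke Proposition~\ref{propprop} to convert a volume integral involving $\nabla^{2}f$, $\mathrm{Ric}$ and $|E|^{2}$ into a boundary integral that, after the reductions above, takes the schematic form
\begin{equation*}
\kappa\int_{\partial M}\tfrac{1}{2}R^{\partial M}\,dA \;\ge\; \kappa c\,|\partial M|,
\end{equation*}
where $c$ is a positive constant extracted from a quadratic expression in $|E|^{2}$ with leading term proportional to $\Lambda$. The two-dimensional Gauss--Bonnet theorem converts the left-hand side into $2\pi\kappa\chi(\partial M)$. The crux is that the polynomial producing $c$ is strictly positive precisely when $|E|^{2}<\tfrac{\sqrt{5}}{5}\Lambda$; under this hypothesis the left-hand side above is positive, forcing $\chi(\partial M)>0$ and, by connectedness, $\partial M\cong S^{2}$. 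Substituting $\chi(\partial M)=2$ in Gauss--Bonnet then yields the announced inequality $\kappa c|\partial M|\le 4\pi\Lambda$.

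For the equality case, I would trace equality back through the integral identity. Equality forces both the pointwise inequality producing $c$ and the Cauchy--Schwarz-type estimates used along the way to be saturated; this in turn forces $|E|\equiv 0$ on $M$, so the system becomes vacuum static with $\kappa\Lambda|\partial M|=4\pi\Lambda$. The resulting rigidity, of Boucher--Gibbons--Horowitz type, then identifies $(M^{3},g,f)$ with the de Sitter system, where $\kappa=\Lambda/3$ and $c=\Lambda$. I expect the main obstacle to be the explicit identification of the constant $c$ inside Proposition~\ref{propprop} and the sharpness verification: the appearance of the factor $\sqrt{5}/5$ should correspond to the discriminant condition of a quadratic inequality in $|E|^{2}$ that must hold pointwise, and care will be needed to keep the argument self-consistent without the parallelism assumption that was available in Theorem~\ref{Crusc_th}.
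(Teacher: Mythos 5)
Your proposal follows essentially the same route as the paper: Theorem~\ref{livreE} is obtained there as the $n=3$ case of Theorem~\ref{geometricineq}, whose proof rests on the integral identity of Proposition~\ref{propprop}, the observation (via $R=2\Lambda+2|E|^2$) that the hypothesis $|E|^2<\tfrac{\sqrt5}{5}\Lambda$ is exactly the discriminant/root condition making the quadratic factor in $|E|^2$ nonnegative, an integration of the $\Delta f$-equation to trade $\int_M f\,dV_g$ for $\kappa|\partial M|$, and finally Gauss--Bonnet to force $\chi(\partial M)>0$ and produce the stated bound, with rigidity via Reilly--Obata. One small correction: on $\partial M$ the first equation of \eqref{eq1} reads $0=0$ (since $f\equiv 0$ there), so $\mathrm{Ric}(\nu,\nu)$ cannot be extracted from it; the needed boundary relation is the Gauss equation \eqref{gausseq} for the totally geodesic boundary, which you also state and which is what the argument actually uses.
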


\begin{remark}\label{rem_E}
This bound on the norm of \( E \) considered in Theorem \ref{livreE} arises naturally, even in the case where the scalar curvature is constant. In particular, when \( n = 3 \), it is known that the electric field satisfies \( |E|^{2} < \Lambda \), and the scalar curvature satisfies \( R < 4\Lambda \); see assertion~\((v)\) of Proposition~\ref{properties} in Section \ref{Sec2}.
\end{remark}

By a different approach, we derive a boundary estimate for electrostatic manifolds that depends only on the cosmological constant $\Lambda,$ that is, the estimate is independent of the surface gravity \( \kappa = |\nabla f| \big|_{\partial M} \). As before, we present here the result in the three-dimensional setting, although it arises as a special case of a more general boundary estimate valid in higher dimensions (see Theorem~\ref{teorigidezint}).

\begin{theoremletter}\label{teorigidezint1}
		Let $(M^3,\, g,\, f,\, E)$ be a compact electrostatic system with connected boundary $\partial M$. Suppose that $$\vert E\vert^{2}< \dfrac{\Lambda}{5}.$$ Then $\partial M$ is a $2$-sphere and
		\begin{equation}
		\label{eqjkl12340}
			\Lambda|\partial M|\leq12\pi ,
		\end{equation}
		Moreover, equality holds in (\ref{eqjkl12340}) if and only if $(M^3,\,g,\,f)$ is isometric to the de Sitter system.
	\end{theoremletter}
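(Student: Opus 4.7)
The plan is to deduce the statement as the $n=3$ specialization of the higher-dimensional \thmref{teorigidezint}, whose proof combines a weighted Reilly-type identity in the bulk with the Gauss--Bonnet theorem on the boundary.

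First I would extract the boundary data from the electrostatic system. Taking the trace of the Hessian equation in (\ref{eq1}) and comparing with the prescribed formula for $\Delta f$ gives the pointwise identity $R = 2\Lambda + 2|E|^2$. Since $f\equiv 0$ on $\partial M$, the Hessian equation forces the full tensor $\nabla^2 f$ to vanish on $\partial M$; consequently $\partial M$ is totally geodesic and $|\nabla f|\equiv \kappa$ is constant on the connected boundary. Applying the traced Gauss equation in three dimensions, $K^{\partial M} = \tfrac12 R - Ric(\nu,\nu) = \Lambda + |E|^2 - Ric(\nu,\nu)$, together with Gauss--Bonnet, yields
\begin{equation}\label{planGB}
2\pi\chi(\partial M) \;=\; \Lambda|\partial M| \,+\, \int_{\partial M}\bigl(|E|^2 - Ric(\nu,\nu)\bigr)\, dA.
\end{equation}

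The main step is to obtain a complementary bound for $\int_{\partial M} Ric(\nu,\nu)\,dA$ via an integral identity on $M$. The natural tool is the generalized Reilly formula of Qiu--Xia (already employed in the proof of \thmref{main}) applied to $u = f$ with a weight built from the lapse; after substituting $\nabla^2 f = f\,T$, $\Delta f = \tfrac{2}{n-1}((n-2)|E|^2-\Lambda)\,f$, and $R = 2\Lambda + 2|E|^2$, the boundary contributions collapse ($\bar\nabla f\equiv 0$, $h\equiv 0$, and $\nabla^2 f\equiv 0$ on $\partial M$), while the bulk integrand organizes itself into a nonnegative tensor-square plus a scalar multiple of $\Lambda - 5|E|^2$. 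Under the strict hypothesis $|E|^2 < \Lambda/5$, the latter has the favorable sign and one extracts an inequality which, combined with \eqref{planGB}, rearranges to $\Lambda|\partial M| \leq 6\pi\chi(\partial M)$. Since the left-hand side is strictly positive, $\chi(\partial M) > 0$; as $\partial M$ is an orientable closed surface, this forces $\chi(\partial M) = 2$, so $\partial M\cong S^2$ and (\ref{eqjkl12340}) follows.

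For the rigidity, equality in (\ref{eqjkl12340}) forces all intermediate inequalities to be equalities. In particular the tensor-square and the scalar term proportional to $\Lambda - 5|E|^2$ must vanish pointwise; since $\Lambda > 0$ the latter gives $|E|\equiv 0$, reducing $(M^3, g, f)$ to a compact vacuum static three-manifold saturating the Boucher--Gibbons--Horowitz area bound, which by the rigidity in \cite{boucher1, sh} must be the de Sitter system. The principal obstacle is the \emph{choice of weight} in the generalized Reilly formula: it has to be chosen so that, after inserting the electrostatic equations, the $|E|^2$-terms in the bulk reassemble into a sign-definite quadratic form whose critical threshold is precisely $\Lambda/5$ in dimension three. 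The value $1/5$ is not an artifact of the method but reflects the exact degeneracy of this form.
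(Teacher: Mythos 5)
Your overall architecture matches the paper's: reduce to the $n$-dimensional boundary estimate (Theorem~\ref{teorigidezint}), use that $\partial M$ is totally geodesic with constant surface gravity, convert $\int_{\partial M}R^{\partial M}$ via Gauss--Bonnet, deduce $\chi(\partial M)>0$ hence $\partial M\cong\mathbb{S}^2$, and conclude. The endgame and the rigidity discussion are essentially right (the paper extracts $\mathring{\nabla}^2f=0$, $\mathring{Ric}=0$ and $E=0$ and invokes Reilly's theorem, whereas you route through the Boucher--Gibbons--Horowitz rigidity after showing $E\equiv 0$; either works once the equality analysis is in hand).

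However, there is a genuine gap in the central analytic step. You propose to obtain the bound on $\int_{\partial M}Ric(\nu,\nu)\,dA$ from the Qiu--Xia generalized Reilly formula applied with $u=f$ and a weight built from the lapse --- and you yourself observe that with this choice all boundary contributions of that formula collapse (since $f=0$, $\nabla_{\partial M}f=0$ and $A=0$ on $\partial M$). But then the resulting identity is purely a bulk relation and carries no information about $\int_{\partial M}Ric(\nu,\nu)$ or $\int_{\partial M}R^{\partial M}$ whatsoever; it cannot be combined with your Gauss--Bonnet identity \eqref{planGB}. Indeed, in the paper the Qiu--Xia formula is the engine of Theorem~\ref{main}/\ref{teoAgen} (whose surviving boundary data is $\sum_i\kappa_i^3|\Sigma_i|$, produced by a separate Bochner/Green computation, and whose threshold is $|E|^2\le\Lambda/(n-2)$, i.e.\ $|E|^2\le\Lambda$ for $n=3$ --- not $\Lambda/5$). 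The identity that actually controls the boundary term is the Robinson--Shen-type divergence formula of Lemma~\ref{lemmaAk} and Proposition~\ref{propprop}: one integrates $div\bigl(\mathring{Ric}(\nabla f)\bigr)$, whose boundary term is exactly $-\int_{\partial M}|\nabla f|\,\mathring{Ric}(\nu,\nu)\,dA_g$, converted into $\int_{\partial M}|\nabla f|R^{\partial M}dA_g$ by the Gauss equation on the totally geodesic boundary; the bulk side, after substituting $R=2\Lambda+2|E|^2$ and $\Delta f=\frac{2}{n-1}((n-2)|E|^2-\Lambda)f$, organizes into $\frac{1}{f}|\mathring{\nabla}^2f|^2+f|\mathring{Ric}|^2$ plus a multiple of $f|E|^2\bigl[((n-1)^2+(n-2)^2)|E|^2-(n-2)\Lambda\bigr]$, which is precisely where the threshold $\Lambda/5$ arises for $n=3$ (note also that this term carries an extra factor $|E|^2$, not just $\Lambda-5|E|^2$). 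Without identifying this divergence identity --- or an equivalent Pohozaev--Schoen-type identity --- the key inequality linking the bulk to $\int_{\partial M}R^{\partial M}$ is not established, and the proof does not close.
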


In the second part of this work, we focus on the role of quasi-local masses in the setting of electrostatic systems. In Newtonian gravity, the mass of a region can be naturally defined by integrating a mass density function. However, in general relativity, the situation is considerably more subtle due to the Equivalence Principle, which precludes the existence of a local energy density for the gravitational field. The definition of mass thus becomes a central and subtle problem in general relativity. In 1982, Penrose \cite{penrose1} identified several major open questions in the field, placing at the forefront the challenge of formulating a suitable quasi-local definition of energy-momentum. Mathematically, the formulation of this problem presents inherent challenges, and as a result, several definitions of quasi-local mass have been proposed over the years in an attempt to capture a coherent and physically meaningful notion of mass in general relativity. For a detailed discussion of the challenges and desirable properties associated with such definitions, we refer the reader to \cite{alaee2023, christodoulou1986} and the references therein.

In order to proceed, we recall the definition of the  Brown-York mass. Let \(\Sigma\) be a connected hypersurface in \((M^n, g)\) such that \((\Sigma, g|_{\Sigma})\) can be embedded in \(\mathbb{R}^n\) as a convex hypersurface. Then, the Riemannian Brown-York mass \(\mathfrak{M}_{BY}\) of \(\Sigma\) with respect to \(g\) is given by
\[
\mathfrak{M}_{BY}(\Sigma, g) = \int_{\Sigma} (H_0 - H) \, dA_g,
\]
where \(H_0\) and \(H\) denote the mean curvatures of \(\Sigma\) as a hypersurface of \(\mathbb{R}^n\) and \(M^n\), respectively, and \(dA_g\) is the volume element on \(\Sigma\) induced by \(g\). In \cite{yuan}, Yuan proved a boundary estimate
for vacuum static spaces in terms of the Riemannian Brown-York mass. Similar estimates were obtained in \cite{costa,ernani}. Motivated by these results, we have the following sharp boundary estimate involving the Riemannian Brown-York for electrostatic manifolds.

\begin{theoremletter}
\label{teoBY}
Let $(M^n,\,g,\,f,\,E)$ be a compact electrostatic system with (possibly disconnected) boundary $\partial M$ and $\Lambda+|E|^{2}>0$. Suppose that each boundary component $(\Sigma_{i},\,g)$ can be isometrically embedded in $\mathbb{R}^n$ as a convex hypersurface. Then we have
\begin{eqnarray}\label{eqq4}
|\Sigma_{i}|\leq c\, \mathfrak{M}_{_{BY}}(\Sigma_i,g),
\end{eqnarray} where $c$ is a positive constant. Moreover, equality holds for some component $\Sigma_{i}$ if and only if $(M^n,\, g,\, f)$ is isometric to the de Sitter system. 
\end{theoremletter}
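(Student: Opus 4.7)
The plan is to adapt Yuan's \cite{yuan} strategy for vacuum static spaces to the electrostatic setting. The argument naturally splits into three stages: reducing the Brown--York mass to an intrinsic boundary quantity, exploiting the electrostatic structure via a Reilly-type identity, and invoking the Minkowski formula in $\mathbb{R}^n$ through the isometric embedding.

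First, I would establish that each component $\Sigma_i$ of $\partial M$ is totally geodesic. From the Hessian equation $\nabla^2 f = f\,T$ with $T = Ric - \tfrac{2}{n-1}\Lambda g + 2 E^\flat \otimes E^\flat - \tfrac{2}{n-1}|E|^2 g$ and the boundary condition $f|_{\partial M}=0$, the Hessian $\nabla^2 f$ vanishes identically on $\partial M$. Since $\nabla f = -\kappa_i \nu$ along $\Sigma_i$ with $\kappa_i>0$, a short computation gives $\nabla^2 f(X,Y) = \kappa_i h_{\Sigma_i}(X,Y)$ for tangent $X,Y$, forcing $h_{\Sigma_i} \equiv 0$. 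Hence $H_i=0$, and the Brown--York mass reduces to
\[
\mathfrak{M}_{BY}(\Sigma_i,g) = \int_{\Sigma_i} H_0 \, dA_g.
\]

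Next, I would apply the Reilly formula to $f$: since $f|_{\partial M}=0$ and $H_i=0$, both the tangential gradient term and the $\int_{\partial M} H f_\nu^2\, dA$ term vanish. Substituting $\Delta f$ and $\nabla^2 f$ from the electrostatic equations and using the scalar curvature identity $R = 2\Lambda + 2|E|^2$ (obtained by comparing the trace of the Hessian equation with the Laplacian equation), one extracts an integral identity relating interior data to $|\Sigma_i|$ and $\kappa_i$. Pairing this with the Euclidean Minkowski formula
\[
(n-1)|\Sigma_i| = \int_{\Sigma_i} H_0 \, \langle X - X_0, \nu_0 \rangle \, dA_g,
\]
where $X:\Sigma_i \hookrightarrow \mathbb{R}^n$ is the convex isometric embedding, and applying a Cauchy--Schwarz-type step, would yield the desired linear bound $|\Sigma_i| \le c \, \mathfrak{M}_{BY}(\Sigma_i,g)$ with $c>0$ depending on $n$ and on $\Lambda + |E|^2$; the positivity hypothesis $\Lambda + |E|^2>0$ is essential to guarantee $R>0$ throughout.

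The main obstacle will be pinning down the precise weighted Reilly identity---possibly the Qiu--Xia generalized Reilly formula already employed in the proof of Theorem~\ref{main}---that meshes naturally with the Euclidean Minkowski formula, and tracking the chain of inequalities so that equality forces simultaneously: the embedded $X(\Sigma_i)$ to be a round sphere in $\mathbb{R}^n$, the electric field to vanish, and $f$ to satisfy the vacuum static condition $\nabla^2 f = -\tfrac{\Lambda}{n-1} f g$. Together these rigidities identify $(M^n,g,f)$ with the de Sitter hemisphere, closing the equality case.
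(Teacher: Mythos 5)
Your preliminary steps are fine: the boundary is indeed totally geodesic (since $\nabla^2 f = f\,T$ forces $\nabla^2 f=0$ where $f=0$), so $H_i=0$ and $\mathfrak{M}_{BY}(\Sigma_i,g)=\int_{\Sigma_i}H_0\,dA_g$. The gap is in the core of the argument: neither the (generalized) Reilly formula nor the Euclidean Minkowski formula can produce the needed lower bound $\int_{\Sigma_i}H_0\,dA_g\geq \tfrac{1}{c}\,|\Sigma_i|$ with a constant tied to the electrostatic data. A Reilly-type identity on $(M,g)$ with weight $f$ only involves intrinsic interior quantities and boundary terms of the form $\kappa_i^{3}|\Sigma_i|$; the Euclidean mean curvature $H_0$ of the isometric embedding never enters such an identity, so there is nothing to ``pair'' with the Minkowski formula. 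Conversely, the Minkowski formula $(n-1)|\Sigma_i|=\int_{\Sigma_i}H_0\,\langle X,\nu_0\rangle\,dA$ would require a uniform bound on the support function $\langle X,\nu_0\rangle$, i.e.\ on the extrinsic size of the embedded image, which is not controlled by the electrostatic system; and even a crude diameter bound would give a non-sharp constant for which the round sphere does \emph{not} achieve equality, so the rigidity statement could not be recovered. The essential bridge between $\int_{\Sigma_i}H_0$ and the interior geometry is a positive-mass-type theorem, and that is the ingredient your outline omits.

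The paper's proof supplies exactly this bridge. One sets $\beta^{-1}=\max_M\bigl(f^2+\tfrac{n(n-1)}{R}|\nabla f|^2\bigr)^{1/2}$ and considers the conformal metric $\overline{g}=(1+\beta f)^{-2}g$. Using the electrostatic equations one shows $\Delta f\geq -\tfrac{R}{n-1}f$ (with equality iff $E=0$), and since $\Lambda+|E|^2>0$ gives $R>0$, a computation with the conformal scalar curvature formula yields $\overline{R}\geq R\bigl[1-\beta^2\bigl(f^2+\tfrac{n(n-1)}{R}|\nabla f|^2\bigr)\bigr]\geq 0$ (Lemma~\ref{posit_conf}). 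Because $f=0$ on $\partial M$, the metrics $g$ and $\overline{g}$ agree on the boundary, the convex embedding hypothesis carries over, and $\overline{H}^i=(n-1)\beta\kappa_i>0$. The Shi--Tam positive mass theorem then gives $\mathfrak{M}_{BY}(\Sigma_i,\overline{g})=\mathfrak{M}_{BY}(\Sigma_i,g)-(n-1)\beta\kappa_i|\Sigma_i|\geq 0$, which is precisely \eqref{eqq4} with $c=\bigl((n-1)\beta\kappa_i\bigr)^{-1}$; the equality case follows from the rigidity in Shi--Tam, which forces $\overline{R}=0$, hence $E\equiv 0$ and, via the Robinson--Shen identity of Lemma~\ref{lemmaAk} and Reilly's theorem, the de Sitter hemisphere. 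To repair your proposal you would need to replace the Reilly--Minkowski step by this conformal deformation plus positive mass argument (or an equivalent quasi-local mass monotonicity).
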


We note that, by virtue of the solution to the Weyl problem, the isometric embedding condition in Theorem \ref{teoBY} can be replaced by appropriate curvature bounds, as for instance, by requiring positive Gaussian curvature when $n = 3,$ see, e.g. \cite{FY,yuan}.

Another important notion of quasi-local mass, the Hawking mass, has received considerable attention due to its essential role in the proof of the Riemannian Penrose inequality \cite{huisken2001}. The standard Hawking mass of a surface \(\Sigma^2\) in a given Riemannian manifold \((M^3, g)\) is defined by

\[
\mathfrak{M}_{H}(\Sigma) = \sqrt{\frac{|\Sigma|}{16 \pi}} \left( 1 - \frac{1}{16\pi} \int_{\Sigma} H^2 \, dA_{g} \right),
\]
where \(|\Sigma|\) and \(H\) denote the area of the surface and its mean curvature with respect to the metric \(g\), respectively. Notice that if \(\Sigma\) is a minimal surface, then the Hawking mass is positive. The Hawking mass is often utilized as a lower bound for the Bartnik quasi-local mass (see \cite{miao2020}). Christodoulou and Yau \cite{christodoulou1986} demonstrated that the Hawking mass is non-negative for stable constant mean curvature (CMC) spheres in $3$-manifolds with non-negative scalar curvature. This result highlights the delicate nature of the Hawking mass, even regarding its positivity. In the presence of an electric field, the notion of charged Hawking mass becomes relevant and is defined below. We emphasize that this appropriate definition was first introduced in \cite[Section~3]{khuri} for the case  \(\Lambda = 0\) (see also \cite{hayward}), and was later extended by \cite{baltazar2023} to the more general setting of an arbitrary cosmological constant.

\begin{definition}\label{hawkingmass}
Let $(M^3,\,g)$ be a three-dimensional Riemannian manifold and $\Sigma^2$ a closed surface on $M^3$. The charged Hawking mass is defined by
\begin{eqnarray*}
\mathfrak{M}_{CH}(\Sigma) = \sqrt{\frac{|\Sigma|}{16 \pi}}\left(\frac{1}{2}\chi(\Sigma) - \frac{1}{16\pi}\int_{\Sigma}(H^2 + \frac{4}{3}\Lambda)dA_{g} + \frac{4\pi}{|\Sigma|}Q(\Sigma)^2 \right),
\end{eqnarray*}
where $|\Sigma|$ stands for the area of $\Sigma$. Here, $\chi(\Sigma)=2(1-g(\Sigma))$ is the Euler characteristic and $g(\Sigma)$ is the genus of $\Sigma.$
\end{definition}

Very recently, the second and fourth named authors \cite{leandro2025} established sharp lower bounds for the charged Hawking mass of constant mean curvature surfaces with index zero or one in electrostatic systems. The result presented below consists of Riemannian Penrose-type inequalities (cf. \cite{huisken2001}) that relate the charged Hawking mass of the boundary to the total charge and the boundary area; see also related inequalities in \cite{Jaracz,MKhuri}. In particular, the assumption that the electric field $E$ is parallel to $\nabla f$ plays a crucial role in deriving one of the inequalities. To be precise, we get the following result.

\begin{theoremletter}\label{hawking2}
Let \((M^3, g, f, E)\) be a compact electrostatic system with connected boundary \(\partial M\). Then the following assertions hold:
\begin{itemize}
    \item[(i)] If \(E\) is parallel to \(\nabla f\) and \(|E|^{2} < \Lambda\), then
    \begin{equation}\label{h1}
        \mathfrak{M}_{CH}(\partial M) \geq \left( \frac{8\pi}{|\partial M|} Q(\partial M)^2 \right) \sqrt{\frac{|\partial M|}{16\pi}}.
    \end{equation}
    
    \item[(ii)] If \(|E|^{2} < \dfrac{\Lambda}{5}\), then
    \begin{equation}\label{h2}
        \mathfrak{M}_{CH}(\partial M) \geq \left( \frac{4\pi}{|\partial M|} Q(\partial M)^2 \right) \sqrt{\frac{|\partial M|}{16\pi}}.
    \end{equation}
\end{itemize} Moreover, equality holds in \eqref{h1} or \eqref{h2} if and only if \(E = 0\) and \((M^3,\, g,\, f)\) is isometric to the de Sitter system.
\end{theoremletter}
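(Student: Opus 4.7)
The plan is to exploit the fact that on the horizon boundary $\partial M = f^{-1}(0)$ the mean curvature vanishes, so the charged Hawking mass collapses to a purely scalar quantity that can be compared directly with the sharp bounds already established earlier in the paper. First I would observe that $\partial M$ is totally geodesic: since $f \equiv 0$ on $\partial M$ and $\kappa = |\nabla f|_{\partial M} > 0$, the gradient $\nabla f$ is a non-vanishing normal field along the boundary, and evaluating the Hessian equation in \eqref{eq1} at $f = 0$ gives $\nabla^{2} f|_{\partial M} = 0$. Since the second fundamental form of a regular level set satisfies $II = -\nabla^{2} f / |\nabla f|$ on tangent vectors, we conclude $II \equiv 0$ and in particular $H \equiv 0$ on $\partial M$. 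Substituting $H = 0$ into Definition \ref{hawkingmass} yields
\begin{equation*}
\mathfrak{M}_{CH}(\partial M) = \sqrt{\frac{|\partial M|}{16\pi}}\left(\frac{1}{2}\chi(\partial M) - \frac{\Lambda |\partial M|}{12\pi} + \frac{4\pi Q(\partial M)^{2}}{|\partial M|}\right).
\end{equation*}

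For assertion $(i)$, subtracting $(8\pi Q(\partial M)^{2}/|\partial M|)\sqrt{|\partial M|/(16\pi)}$ from both sides of \eqref{h1} and clearing the positive factor $\sqrt{|\partial M|/(16\pi)}$, the inequality is equivalent to
\begin{equation*}
2\pi \chi(\partial M) \geq \frac{\Lambda |\partial M|}{3} + \frac{16\pi^{2} Q(\partial M)^{2}}{|\partial M|}.
\end{equation*}
This is precisely the conclusion of Theorem~\ref{Crusc_th} specialized to $n = 3$ and $\ell = 1$: dividing that inequality by $\kappa > 0$, using $\omega_{2} = 4\pi$, and invoking the Gauss-Bonnet theorem $\int_{\partial M} (R^{\partial M}/2)\, dA_{g} = 2\pi \chi(\partial M)$, we recover the displayed estimate. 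The hypotheses $|E|^{2} < \Lambda$ and $E \parallel \nabla f$ match those of Theorem~\ref{Crusc_th} in dimension three.

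For assertion $(ii)$, the same manipulation reduces \eqref{h2} to $\chi(\partial M)/2 \geq \Lambda|\partial M|/(12\pi)$, in which the charge term cancels on both sides. Under the stronger hypothesis $|E|^{2} < \Lambda/5$, Theorem~\ref{teorigidezint1} guarantees that $\partial M$ is a $2$-sphere, so $\chi(\partial M) = 2$, and that $\Lambda|\partial M| \leq 12\pi$; these two statements combine to give exactly the required bound. The rigidity claims for both \eqref{h1} and \eqref{h2} transfer directly from the equality cases of Theorems~\ref{Crusc_th} and~\ref{teorigidezint1}, respectively, each of which characterizes $(M^{3}, g, f)$ as the de Sitter system; since that model is vacuum, $E \equiv 0$ and $Q(\partial M) = 0$ follow automatically, and conversely in de Sitter both sides of \eqref{h1} and \eqref{h2} vanish. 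I do not anticipate any serious obstacle: once $H \equiv 0$ on $\partial M$ is in hand, the argument is pure algebraic rearrangement matched against two already-proved sharp inequalities.
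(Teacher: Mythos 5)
Your proposal is correct and follows essentially the same route as the paper: both use that the horizon boundary is totally geodesic (so $H=0$), rewrite $\mathfrak{M}_{CH}(\partial M)$ accordingly, and then feed in the sharp boundary inequality of Corollary~\ref{cororo} (the $n=3$, $\ell=1$ case of Theorem~\ref{Crusc_th}) for part (i) and Theorem~\ref{teorigidezint1} for part (ii), with the rigidity transferring from the equality cases of those results. The only cosmetic difference is that you present the step as an algebraic equivalence with the earlier estimates, whereas the paper chains the inequalities directly.
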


\vspace{0.30cm}

The remainder of this article is organized as follows. In Section 2, we review fundamental properties of electrostatic systems, establish preliminary results, and recall key theorems that will be used throughout the paper. Moreover, it contains some explicit examples of electrostatic systems. In Section 3, we present our results on boundary area estimates and rigidity, including the proofs of Theorems~\ref{main}, \ref{Crusc_th}, \ref{livreE}, and \ref{teorigidezint1}. Finally, in Section 4, we address results concerning the Brown–York and Hawking masses, specifically Theorems~\ref{teoBY} and \ref{hawking2}.

	\section{Background}
	\label{Sec2}
	
	Throughout this section, we recall some basic results that will be used in the text and establish some preliminary results. Moreover, we present some examples of electrostatic systems in higher dimensions. 

	\subsection{Preliminary results} We begin by collecting some properties of the electrostatic system 
\begin{equation}\label{s1}
\left\{
\begin{aligned}
\nabla^2 f &= f \left( Ric - \dfrac{2}{n-1} \Lambda g + 2 E^\flat \otimes E^\flat - \dfrac{2}{n-1} |E|^2 g \right), \\\\
\Delta f &= \dfrac{2}{n-1} \left( (n-2)|E|^2 - \Lambda \right) f, \\\\
0 &= div(E) \quad \text{and} \quad 0 = d(f E^\flat).
\end{aligned}
\right.
\end{equation} Although some of these features are already known (see, for example, \cite[Lemma 4]{tiarlos}), we include them here for the sake of completeness.

	\begin{proposition}\label{properties}
		Let $(M^n,\,g,\,f,\,E)$ be an electrostatic system with a non-empty boundary. Then the following assertions hold:
		
		\begin{itemize}
			\item[(i)] The scalar curvature of $(M^{n},\,g)$ is given by
			\begin{equation}
			\label{rrr}
			R=2\Lambda+2|E|^{2};
			\end{equation}
			\item[(ii)] The boundary \( \partial M \) is totally geodesic. In particular,
\begin{equation} \label{gausseq}
\mathring{Ric}(\nu, \nu) = \frac{n-2}{2n} R - \frac{1}{2} R^{\partial M},
\end{equation}
where \( \mathring{Ric} = Ric - \frac{R}{n}g \) is the traceless Ricci tensor, and \( R^{\partial M} \) is the scalar curvature of \( \partial M \).

			\item[(iii)] If $\partial M=\displaystyle\cup_{i=1}^{l} \Sigma_{i}$, where $\Sigma_{i}$ are the connected components of $\partial M$, then $\kappa_{i}:=|\nabla f|\Big|_{\Sigma_{i}}$ are non-null constant (they are called surface gravities);
			\item[(iv)] $\nabla f$ and $E$ are proportional along $\partial M$. In particular, $|\langle E,\nabla f\rangle|=|E||\nabla f|$ along $\partial M$;			
			\item[(v)] If $M$ is compact and $\vert E\vert$ constant, then $$\Lambda> \left(n-2\right)|E|^{2}.$$
			In particular, 
			\[
2\Lambda \leq R < \frac{2(n-1)}{n-2} \Lambda;
\]
			\item[(vi)] $(M^n, g, f)$ is sub-static, i.e.,
			\begin{equation}
				fRic-\nabla^2 f+(\Delta f)g\geq 0. \label{sub_static}
			\end{equation}
		\end{itemize}
	\end{proposition}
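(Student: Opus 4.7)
The plan is to handle the six assertions sequentially; each follows from direct manipulation of the system \eqref{s1} together with standard facts at the regular level set $\partial M = f^{-1}(0)$. For (i), I would trace the Hessian equation: the trace of its right-hand side gives $\Delta f = f\bigl(R - \tfrac{2n}{n-1}\Lambda - \tfrac{2}{n-1}|E|^2\bigr)$, and matching this against the prescribed formula for $\Delta f$ in \eqref{s1} (dividing by $f$ in the interior, where $f>0$) immediately yields $R = 2\Lambda + 2|E|^2$.

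For (ii) and (iii), the key observation is that the right-hand side of the Hessian equation carries an overall factor of $f$, so $\nabla^2 f$ vanishes identically on $\partial M$. Since $\partial M$ is a regular level set of $f$, the gradient $\nabla f$ is a nowhere-vanishing normal along $\partial M$; and for tangent vectors $X,Y$, the identity $\nabla^2 f(X,Y) = -\langle\nabla_X Y,\nabla f\rangle = -|\nabla f|\,II(X,Y)$ with $\nu = \nabla f/|\nabla f|$ forces the second fundamental form $II$ to vanish. Equation \eqref{gausseq} then follows from the contracted Gauss equation $R^{\partial M} = R - 2\,Ric(\nu,\nu)$ for totally geodesic hypersurfaces, together with $\mathring{Ric}(\nu,\nu) = Ric(\nu,\nu) - R/n$. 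The same vanishing $\nabla^2 f \equiv 0$ on $\partial M$ also proves (iii): for any $X$ tangent to a component $\Sigma_i$,
\[
X(|\nabla f|^2) = 2\,\nabla^2 f(X,\nabla f) = 0,
\]
so $|\nabla f|$ is constant on each $\Sigma_i$, and nonzero by regularity.

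For (iv), I would expand $d(fE^\flat) = df\wedge E^\flat + f\,dE^\flat = 0$ and restrict to $\partial M$: the term involving $f$ drops out, leaving $df \wedge E^\flat = 0$ on $\partial M$, whence $E^\flat$ is a scalar multiple of $df$ and $E$ is parallel to $\nabla f$, with the equality $|\langle E,\nabla f\rangle| = |E||\nabla f|$ just saturated Cauchy–Schwarz. For (v), when $|E|$ is constant the lapse equation becomes a linear eigenvalue problem $-\Delta f = -c\,f$ with $c = \tfrac{2}{n-1}\bigl((n-2)|E|^2 - \Lambda\bigr)$ and Dirichlet condition $f|_{\partial M} = 0$. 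Since $f>0$ in the interior, $f$ is a principal Dirichlet eigenfunction of $-\Delta$, so $-c$ equals the (strictly positive) first Dirichlet eigenvalue; this forces $\Lambda > (n-2)|E|^2$, and substituting into (i) gives the upper bound $R < 2(n-1)\Lambda/(n-2)$, while $R \geq 2\Lambda$ is trivial from $|E|^2 \geq 0$.

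Finally, for (vi), I would replace $\nabla^2 f$ in $f\,Ric - \nabla^2 f + (\Delta f)\,g$ using the first equation of \eqref{s1}: the $Ric$-terms cancel, and after substituting the expression for $\Delta f$ and collecting terms, a routine simplification yields
\[
f\,Ric - \nabla^2 f + (\Delta f)\,g \;=\; 2f\bigl(|E|^2 g - E^\flat \otimes E^\flat\bigr).
\]
Nonnegativity as a symmetric $2$-tensor is immediate from the pointwise Cauchy–Schwarz bound $|E|^2|X|^2 - \langle E,X\rangle^2 \geq 0$, combined with $f \geq 0$ on $M$. None of these steps presents a genuine obstacle; the only point requiring mild care is the eigenvalue argument in (v), which rests on strict positivity of $f$ in the interior and of the first Dirichlet eigenvalue of $-\Delta$ on a compact manifold with boundary.
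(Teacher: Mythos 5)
Your argument follows the paper's proof almost step for step: (i) by tracing the Hessian equation and matching against the prescribed Laplacian, (ii)--(iii) from the vanishing of $\nabla^2 f$ along $\partial M$ (since the right-hand side of the Hessian equation carries a factor of $f$), (iv) from restricting $d(fE^\flat)=0$ to the zero set of $f$, and (vi) from the same algebraic cancellation followed by Cauchy--Schwarz. The one genuine issue is in (ii)/(iii), where you write that ``$\partial M$ is a regular level set of $f$'' and that $|\nabla f|$ is ``nonzero by regularity.'' Regularity of the level set $f^{-1}(0)=\partial M$ is \emph{not} a hypothesis of the proposition -- a priori $f$ could vanish to higher order at the boundary -- and it is precisely the point that must be established before $\nu=-\nabla f/|\nabla f|$ can be used as a unit normal and before the surface gravities $\kappa_i$ can be declared non-null. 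The paper closes this gap with a short ODE argument: along a unit-speed geodesic $\sigma$ from a boundary point $x$ into the interior, $\theta(t)=f(\sigma(t))$ satisfies the linear second-order equation $\theta''(t)=A(t)\theta(t)$ coming from the Hessian equation; if $\nabla f(x)=0$, then $\theta(0)=\theta'(0)=0$, so $\theta\equiv 0$ by uniqueness of solutions, contradicting $f>0$ in the interior. You need to supply this (or an equivalent) argument rather than assume it.

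Apart from that, the only place where you genuinely deviate is (v): the paper integrates $\Delta f=\tfrac{2}{n-1}\bigl((n-2)|E|^2-\Lambda\bigr)f$ over $M$ and applies Stokes' theorem, obtaining $-\sum_i\kappa_i|\Sigma_i|$ on the left and a positive multiple of $(n-2)|E|^2-\Lambda$ on the right, whereas you identify $f$ as a positive Dirichlet eigenfunction and invoke the strict positivity of the first Dirichlet eigenvalue. Both are correct and give the strict inequality; the paper's route is more elementary and reuses item (iii), while yours does not require the non-vanishing of the $\kappa_i$, only standard spectral theory. Everything else in your proposal matches the paper's computations.
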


	\begin{proof}
The proof is structured according to each assertion:
	
		\begin{itemize}
		\item[(i)] We take the trace of the first equation in \eqref{s1} in order to infer $$\Delta f= f\left(R -\frac{2n}{n-1}\Lambda -\frac{2}{n-1}|E|^2\right)=\left(\frac{2(n-2)}{n-1}|E|^2 -\frac{2}{n-1}\Lambda\right)f,$$ which combined with the second equation in \eqref{s1} proves the stated identity. 
		
		\item[(ii)] First, we observe that \( |\nabla f(x)| \neq 0 \) for all \( x \in \partial M \). Indeed, consider a unit-speed geodesic \( \sigma: [0,1] \to M^n \) such that \( \sigma(0) = x \in \partial M \) and \( \sigma(1) \in \operatorname{int}(M) \). Define the function \( \theta: [0,1] \to \mathbb{R} \) by \( \theta(t) = f(\sigma(t)) \). Then \( \theta(0) = f(x) = 0 \), and
\begin{eqnarray*}
\theta''(t) = \nabla^2 f(\sigma'(t), \sigma'(t)) = A(t)\theta(t),
\end{eqnarray*} for some function \( A(t) \). If \( \nabla f(x) = 0 \), then \( \theta(t) \) would satisfy the initial value problem \( \theta''(t) = A(t)\theta(t) \) with \(  \theta(0)=\theta'(0) = 0 \), implying that \( \theta \) is identically zero near \( x \), and thus, \( f \) would vanish along \( \sigma \), leading to a contradiction.

Since \( \partial M = f^{-1}(0) \), we may take \( \nu = -\frac{\nabla f}{|\nabla f|} \) as a unit normal vector field on \( \partial M \). Hence, for any vector fields \( X, Y \in \mathfrak{X}(\partial M) \), the second fundamental form \( A \) of \( \partial M \) satisfies
\begin{equation*} \label{Cal-A}
A(X,Y) = -\langle \nabla_{X} \nu, Y \rangle = \frac{1}{|\nabla f|} \nabla^{2} f(X,Y) = 0,
\end{equation*}
along \( \partial M \). The Gauss equation then yields identity \eqref{gausseq} directly.

		\item[(iii)] For any $X\in\mathfrak{X}(\partial M),$ we compute
		 \begin{eqnarray*}
		X(|\nabla f|^2)=2\langle\nabla_X\nabla f,\nabla f\rangle=2\nabla^2f(X,\nabla f)=0,
	\end{eqnarray*}
	along $\partial M$.
		
		\item[(iv)] Since
		$$0=d(fE^{\flat})=df\wedge E^{\flat}+fdE^{\flat},$$
		and $f=0$ on $\partial M,$ it follows that $df\wedge E^{\flat}=0$ on $\partial M$.
		\item[(v)] Integrating the second equation in  \eqref{s1} and using the notation from item (iii), we obtain
\begin{eqnarray*}
-\sum_{i} \kappa_{i} |\Sigma_{i}| = \int_{M} \Delta f \, dV_{g} = \frac{2}{n-1} \left( (n-2)|E|^{2} - \Lambda \right) \int_{M} f \, dV_{g}.
\end{eqnarray*} Since \( f > 0 \) in \( M^n \), the result follows.

		\item[(vi)] A direct computation using system \eqref{s1} gives
\[
f \, Ric - \nabla^2 f + (\Delta f) g = 2f \left( |E|^{2}g - E^{\flat} \otimes E^{\flat} \right),
\]
and the desired conclusion follows from the Cauchy–Schwarz inequality.
\end{itemize} This finishes the proof.
	\end{proof}

	\begin{remark}	
It is worth noting that the condition \( d(fE^{\flat}) = 0 \) is automatically satisfied when \( fE = \nabla \psi \) for some smooth (electric) potential function \( \psi \) defined on \( M \). Moreover, by Poincar\'e's lemma (see, for instance, \cite[Corollary 11.50]{lee}), it follows that, locally,
\begin{eqnarray*}
fE = \nabla \psi,
\end{eqnarray*}
where \( \psi: U \subseteq M \to \mathbb{R} \) is a smooth function defined on some open subset \( U \subseteq M \). In fact, if \( M \) is contractible, the potential \( \psi \) can be defined globally.
\end{remark}

\begin{remark}
\label{sub_sta}
We also highlight that the condition (vi) established in Proposition~\ref{properties} carries an important physical interpretation related to Einstein's field equations. Consider an $(n+1)$-dimensional static spacetime $(\widehat{M}, \hat{g})$, where
\begin{equation}\label{static_space}
    \widehat{M} = \mathbb{R} \times M \, , \qquad \hat{g} = -f^2 \, \mathrm{d}t \otimes \mathrm{d}t + g,
\end{equation} $(M^n, g)$ is a Riemannian manifold and $f \in C^\infty(M)$ is a positive smooth function. Suppose this spacetime satisfies the Einstein field equations:
\begin{equation}\label{Einst_eq}
    Ric_{\hat{g}} + \left( \Lambda - \frac{1}{2} R_{\hat{g}} \right) \hat{g} = \mathfrak{T},
\end{equation} where $\mathfrak{T}$ stands for the stress-energy tensor and $\Lambda$ is the cosmological constant.

The spacetime $(\widehat{M}, \hat{g})$ is said to satisfy the \emph{null energy condition (NEC)} for \eqref{Einst_eq} if
\[
\mathfrak{T}(Y, Y) \geq 0
\]
for all null vectors $Y$, that is, vectors satisfying $\hat{g}(Y, Y) = 0$. This condition is a natural geometric assumption and plays a central role in Penrose’s singularity theorem~\cite{penrose}. It is also closely related to the sub-static condition \eqref{sub_static} (see, e.g.,~\cite[Lemma~3.8]{wang} and~\cite[Section~2]{cfmr}).

Indeed, up to a rescaling, a null vector $Y$ can be expressed as $Y = \widehat{e_0} + X$, where $\widehat{e_0} = \frac{1}{f} \frac{\partial}{\partial t}$ and $X \in \mathfrak{X}(M)$ satisfies $g(X, X) = 1$. Given the form of the metric in \eqref{static_space} and the fact that $\hat{g}(\widehat{e_0}, \widehat{e_0}) = -1$, applying the (NEC) to $Y$ yields:
\begin{eqnarray*}
    0 \leq \mathfrak{T}(Y, Y) &=& \mathfrak{T}_{00} + \mathfrak{T}_{ij} X^i X^j \\
    &=& \left( -\Lambda + \frac{R}{2} \right) + \left[ Ric - \frac{\nabla^2 f}{f} + \frac{\Delta f}{f} g \right] (X, X) + \left( \Lambda - \frac{R}{2} \right) g(X, X) \\
    &=& \left[ Ric - \frac{\nabla^2 f}{f} + \frac{\Delta f}{f} g \right] (X, X),
\end{eqnarray*}
where $R$ denotes the scalar curvature of $(M, g)$. This is precisely the condition stated in item (vi) of Proposition~\ref{properties}.

\end{remark}

Proceeding, it follows from Eqs. \eqref{s1} and (\ref{rrr}) that 
\begin{eqnarray}\label{ab01}
\nabla^{2} f = f\left(Ric + 2E^{\flat}\otimes E^{\flat} - \frac{R}{(n-1)}g\right)
\end{eqnarray}
and 
\begin{eqnarray}\label{ab02}
\Delta f = \left(\frac{(n-2)}{(n-1)}R - 2\Lambda\right)f.
\end{eqnarray}

The next result is a divergence-type formula in the setting of electrostatic systems, primarily inspired by the classical Robinson-Shen identity \cite{sh}; see also \cite{leandro2023, leandro2024} for related formulas.

\begin{lemma}
\label{lemmaAk}
Let $(M^{n},\,g,\,f,\,E)$ be an electrostatic system. Then we have:
		\begin{eqnarray}\label{notnew}
			div \left[\frac{1}{f}\left(\nabla|\nabla f|^2  -4f\langle\nabla f,\,E\rangle E^{\flat} + \frac{2Rf}{n(n-1)}\nabla f\right)\right]&=& 2f|\mathring{Ric}|^2 + 4f\mathring{Ric}(E,\,E) \nonumber\\&&+ \frac{n-2}{n}\langle\nabla R,\nabla f\rangle .
		\end{eqnarray}
	\end{lemma}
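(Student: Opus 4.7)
The plan is to expand the divergence on the left-hand side by linearity into three pieces,
\[
\operatorname{div}\!\left(\frac{\nabla|\nabla f|^{2}}{f}\right) - 4\operatorname{div}\bigl(\langle\nabla f,E\rangle E\bigr) + \frac{2}{n(n-1)}\operatorname{div}(R\,\nabla f),
\]
compute each one using \eqref{ab01}, \eqref{ab02}, and the additional electrostatic constraints $\operatorname{div}(E)=0$ and $d(fE^{\flat})=0$, and then show that all ``bad'' terms cancel by invoking the scalar curvature identity $R=2\Lambda+2|E|^{2}$ from Proposition~\ref{properties}(i).

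\textbf{Step 1: the Bochner piece.} For the first summand I would write
\[
\operatorname{div}\!\left(\frac{\nabla|\nabla f|^{2}}{f}\right)=\frac{\Delta|\nabla f|^{2}}{f}-\frac{\langle\nabla f,\nabla|\nabla f|^{2}\rangle}{f^{2}},
\]
apply the classical Bochner formula $\tfrac{1}{2}\Delta|\nabla f|^{2}=|\nabla^{2}f|^{2}+\mathrm{Ric}(\nabla f,\nabla f)+\langle\nabla f,\nabla\Delta f\rangle$, substitute $\Delta f$ from \eqref{ab02}, and replace $\nabla^{2}f$ by the right-hand side of \eqref{ab01}. The last step also requires $\nabla^{2}f(\nabla f,\nabla f)=\tfrac{1}{2}\langle\nabla|\nabla f|^{2},\nabla f\rangle$ to rewrite the second term of the divergence. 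Using $\mathrm{Ric}=\mathring{\mathrm{Ric}}+\tfrac{R}{n}g$, the quantity $|\nabla^{2}f|^{2}=f^{2}\bigl|\mathring{\mathrm{Ric}}+2E^{\flat}\otimes E^{\flat}-\tfrac{R}{n(n-1)}g\bigr|^{2}$ expands into the terms $2f|\mathring{\mathrm{Ric}}|^{2}$ and $8f\mathring{\mathrm{Ric}}(E,E)$ that I want to keep, plus five algebraic terms built from $R$, $|E|^{2}$ and $\Lambda$ that will have to cancel against contributions from the other pieces.

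\textbf{Step 2: the electric piece.} Since $\operatorname{div}(E)=0$, one has $\operatorname{div}\bigl(\langle\nabla f,E\rangle E\bigr)=E(\langle\nabla f,E\rangle)=\nabla^{2}f(E,E)+\langle\nabla f,\nabla_{E}E\rangle$. The term $\nabla^{2}f(E,E)$ is handled again by \eqref{ab01}, producing an $\mathring{\mathrm{Ric}}(E,E)$ contribution. For $\langle\nabla f,\nabla_{E}E\rangle$ I would exploit the closedness condition $d(fE^{\flat})=0$: expanding the two-form gives
\[
\langle\nabla_{X}E,Y\rangle-\langle\nabla_{Y}E,X\rangle=\frac{Y(f)\langle E,X\rangle-X(f)\langle E,Y\rangle}{f},
\]
and evaluating at $X=E$, $Y=\nabla f$, combined with $\langle\nabla_{\nabla f}E,E\rangle=\tfrac{1}{2}\langle\nabla f,\nabla|E|^{2}\rangle$ and $\nabla|E|^{2}=\tfrac{1}{2}\nabla R$ (from Proposition~\ref{properties}(i)), yields
\[
\langle\nabla f,\nabla_{E}E\rangle=\frac{1}{4}\langle\nabla f,\nabla R\rangle+\frac{|\nabla f|^{2}|E|^{2}-\langle\nabla f,E\rangle^{2}}{f}.
\]
This is the key identity that both produces the correct coefficient in front of $\langle\nabla R,\nabla f\rangle$ and creates the fractional terms needed to absorb what is left over from Step~1.

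\textbf{Step 3: collect and cancel.} The third piece is the routine $\operatorname{div}(R\nabla f)=\langle\nabla R,\nabla f\rangle+R\Delta f$, with $\Delta f$ from \eqref{ab02}. Gathering all contributions, I expect four groups of ``error'' terms: those proportional to $|\nabla f|^{2}/f$, those proportional to $\langle\nabla f,E\rangle^{2}/f$, those scaling like $fR^{2},\ fR|E|^{2},\ fR\Lambda$, and the $\langle\nabla R,\nabla f\rangle$ coefficients. The first group should cancel identically by the algebraic identity $2R-4\Lambda-4|E|^{2}=0$, the second group by direct cancellation between Steps~1 and~2, the third group again by $R-2|E|^{2}-2\Lambda=0$, and the fourth should assemble into $\tfrac{n-2}{n}\langle\nabla R,\nabla f\rangle$ after combining $\tfrac{2(n-2)}{n-1}$, $-1$, and $\tfrac{2}{n(n-1)}$.

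\textbf{Main obstacle.} The only non-mechanical step is the computation of $\langle\nabla f,\nabla_{E}E\rangle$ in Step~2; everything else is an application of Bochner, \eqref{ab01}, \eqref{ab02}, and bookkeeping. The subtlety is that $E$ is not \emph{itself} closed (only $fE^{\flat}$ is), so the skew part of $\nabla E^{\flat}$ contributes an extra $f^{-1}$-term which, somewhat miraculously, is precisely what kills the troublesome $|\nabla f|^{2}|E|^{2}/f$ and $\langle\nabla f,E\rangle^{2}/f$ terms left behind by the Bochner identity. Once this cancellation is observed, the remaining algebra is routine.
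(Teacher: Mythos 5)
Your plan is correct --- I checked the key identity $\langle\nabla f,\nabla_{E}E\rangle=\tfrac{1}{4}\langle\nabla f,\nabla R\rangle+f^{-1}\bigl(|\nabla f|^{2}|E|^{2}-\langle\nabla f,E\rangle^{2}\bigr)$ as well as all four groups of cancellations (the $|\nabla f|^{2}/f$ group via $2R-4\Lambda-4|E|^{2}=0$, the $\langle\nabla f,E\rangle^{2}/f$ group between Steps 1 and 2, the $f|E|^{4}$, $fR^{2}$, $fR|E|^{2}$ group, and the coefficient $\tfrac{2(n-2)}{n-1}-1+\tfrac{2}{n(n-1)}=\tfrac{n-2}{n}$), and they all work out --- but it is a genuinely different route from the paper's. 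The paper does not expand the divergence at all: it first observes (their Eq.~\eqref{RSTipeIdentityEq1}, a Robinson--Shen-type identity) that the vector field inside the bracket is exactly $2\mathring{Ric}(\nabla f)$, which follows in one line from the Hessian equation \eqref{ab01}, and then computes $div\bigl(\mathring{Ric}(\nabla f)\bigr)=(div\,\mathring{Ric})(\nabla f)+\langle\mathring{Ric},\nabla^{2}f\rangle$ using the twice-contracted second Bianchi identity and \eqref{ab01} again. That argument is three lines, avoids the Bochner formula entirely, and --- notably --- never invokes $div(E)=0$ or $d(fE^{\flat})=0$, so it shows the identity is a consequence of the Hessian equation alone. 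Your approach is considerably longer and uses the full Maxwell constraints, but it is self-contained at the level of raw computation, and it makes explicit where each structural hypothesis enters; the subsidiary formula for $\langle\nabla f,\nabla_{E}E\rangle$ extracted from $d(fE^{\flat})=0$ is a genuine contribution of your route and could be useful elsewhere. The one point worth flagging is that your Step 2 is sensitive to the sign/normalization conventions for $d\omega$ and $\wedge$ (with the ``$\tfrac12$'' convention for the exterior derivative the fractional term would come out halved and the cancellation would fail), so in a written-up version you should state the convention $d\omega(X,Y)=X\omega(Y)-Y\omega(X)-\omega([X,Y])$ explicitly.
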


	\begin{proof}
	One easily verifies that 
	
	$$div (\mathring{Ric}(\nabla f))=div\mathring{Ric}(\nabla f)+\langle \mathring{Ric},\,\nabla^{2}f\rangle.$$ This jointly with the twice-contracted second Bianchi identity ($2div\,Ric=\nabla R$) and \eqref{ab01} yields

	\begin{eqnarray}
			\label{Identdivric}
			div (\mathring{Ric}(\nabla f))&=& f|\mathring{R}ic|^2+\frac{n-2}{2n}\langle\nabla R,\nabla f\rangle + 2f\mathring{R}ic(E,\,E).
		\end{eqnarray}
	
	On the other hand, it follows from (\ref{ab01}) that 
	
		\begin{equation}
			\label{RSTipeIdentityEq1}
			2\mathring{Ric}(\nabla f)=\frac{1}{f}\nabla|\nabla f|^2  -4\langle\nabla f,\,E\rangle E^{\flat} + \frac{2R}{n(n-1)}\nabla f,
		\end{equation}  Plugging this into (\ref{Identdivric}), one sees that

		\begin{eqnarray*}
			div \left[\frac{1}{f}\left(\nabla|\nabla f|^2  - 4f\langle\nabla f,\,E\rangle E^{\flat} + \frac{2Rf}{n(n-1)}\nabla f\right)\right]\nonumber\\ =2f|\mathring{Ric}|^2+\frac{n-2}{n}\langle\nabla R,\nabla f\rangle + 4f\mathring{Ric}(E,\,E),
		\end{eqnarray*}  as asserted. 
	\end{proof}

As a consequence of the previous result, we obtain the following integral identity.

	\begin{proposition}
	\label{propprop}
		Let $(M^{n},\,g,\,f,\,E)$ be a compact electrostatic system with boundary $\partial M.$  Then we have:
		\begin{eqnarray}\label{divformula}
			\int_{M}\left[\frac{1}{f}\vert\mathring{\nabla}^2f\vert^{2} + f\vert\mathring{Ric}\vert^{2}\right]dV_{g}	&=&	\int_{\partial M}\vert\nabla f\vert R^{\partial M} dA_{g} \nonumber\\
			&&+ \frac{1}{n}\int_{M}\left( 4(n-1)f\vert E\vert^{4}+(n-2)R\Delta f\right) dV_{g}.
		\end{eqnarray}
		In particular, if $\Lambda\geq 0,$ then
		\begin{equation}\label{geoinq}
\begin{aligned}
\frac{4(n-1)}{n}\int_{M}f\Bigg\{ &\vert E\vert^{2} +  \dfrac{(n-2)^2}{4(n - 1)^2}
\left[\sqrt{R\left(R + \dfrac{8(n-1)^2}{(n-2)^3}\Lambda\right)} + R\right]\Bigg\} \\
&\times\left\{\dfrac{(n-2)^2}{4(n - 1)^2}
\left[\sqrt{R\left(R + \dfrac{8(n-1)^2}{(n-2)^3}\Lambda\right)} - R\right] 
- \vert E\vert^{2} \right\} dV_{g} \\
&\leq \int_{\partial M}\vert\nabla f\vert R^{\partial M} \, dA_{g}.
\end{aligned}
\end{equation}
Moreover, equality holds in (\ref{geoinq}) if and only if $(M^{n},\,g,\,f,\,E)$ is isometric to the de Sitter system.
	\end{proposition}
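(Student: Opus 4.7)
The plan is to integrate the divergence identity from Lemma~\ref{lemmaAk} over $M$ and convert the left-hand side into a boundary flux, then match with an algebraic identity for $|\mathring{\nabla}^{2}f|^{2}$. Starting from the first equation in \eqref{s1} together with $R=2\Lambda+2|E|^{2}$, a direct computation gives
\[
\mathring{\nabla}^{2}f \;=\; f\Bigl(\mathring{Ric}+2E^{\flat}\otimes E^{\flat}-\tfrac{2|E|^{2}}{n}g\Bigr),
\]
and squaring this (using that the trace-free part of $2E^{\flat}\otimes E^{\flat}$ has squared norm $\tfrac{4(n-1)}{n}|E|^{4}$) yields the pointwise identity
\[
\tfrac{1}{f}|\mathring{\nabla}^{2}f|^{2}+f|\mathring{Ric}|^{2}\;=\;2f|\mathring{Ric}|^{2}+4f\,\mathring{Ric}(E,E)+\tfrac{4(n-1)}{n}f|E|^{4}.
\]
The first two summands on the right are exactly the non-scalar-curvature terms appearing in Lemma~\ref{lemmaAk}, so combining these two facts already produces everything except the boundary term and a $\langle\nabla R,\nabla f\rangle$ integrand to be treated separately.

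For the boundary flux of the vector field $X=\tfrac{1}{f}\bigl(\nabla|\nabla f|^{2}-4f\langle\nabla f,E\rangle E^{\flat}+\tfrac{2Rf}{n(n-1)}\nabla f\bigr)$ appearing inside the divergence in Lemma~\ref{lemmaAk}, I would substitute $\nabla|\nabla f|^{2}/f = 2(Ric+2E^{\flat}\otimes E^{\flat}-\tfrac{2\Lambda}{n-1}g-\tfrac{2|E|^{2}}{n-1}g)(\nabla f,\cdot)$ coming from \eqref{s1} (which is in particular bounded up to $\partial M$), use $\nu=-\nabla f/|\nabla f|$ and assertions (iii)--(iv) of Proposition~\ref{properties} (so that $\langle E,\nu\rangle^{2}=|E|^{2}$ on $\partial M$), and finally the Gauss identity $Ric(\nu,\nu)=\tfrac{R}{2}-\tfrac{R^{\partial M}}{2}$ from (ii). The coefficients of $\Lambda$ and $|E|^{2}$ should both collapse to the common factor $-\tfrac{2(n-2)}{n}$, leaving
\[
\langle X,\nu\rangle\Big|_{\partial M} \;=\; |\nabla f|\Bigl(R^{\partial M}-\tfrac{n-2}{n}R\Bigr).
\]
Integrating by parts $-\tfrac{n-2}{n}\int_{M}\langle\nabla R,\nabla f\rangle\,dV_{g}$ produces a boundary contribution $+\tfrac{n-2}{n}\int_{\partial M}R|\nabla f|\,dA_g$ together with a bulk term $+\tfrac{n-2}{n}\int_{M}R\Delta f\,dV_g$. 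The $R|\nabla f|$ boundary contributions cancel exactly against the $-\tfrac{n-2}{n}R$ piece of $\langle X,\nu\rangle$, leaving precisely \eqref{divformula}.

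The inequality \eqref{geoinq} then follows because the left-hand side of \eqref{divformula} is manifestly non-negative, so
\[
-\tfrac{1}{n}\int_{M}\bigl(4(n-1)f|E|^{4}+(n-2)R\,\Delta f\bigr)\,dV_{g}\;\leq\;\int_{\partial M}|\nabla f|R^{\partial M}\,dA_g.
\]
Substituting $\Delta f=\tfrac{2}{n-1}((n-2)|E|^{2}-\Lambda)f$ turns the bracketed integrand into a quadratic polynomial in $|E|^{2}$ with coefficients depending on $R$ and $\Lambda$. The discriminant comes out to $R\bigl(R+\tfrac{8(n-1)^{2}}{(n-2)^{3}}\Lambda\bigr)$, which is non-negative exactly because $\Lambda\geq 0$, and the two roots in $|E|^{2}$ are precisely $-X$ and $+Y$ from the statement, producing the factored form in \eqref{geoinq}.

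For the rigidity statement, equality in \eqref{geoinq} forces $\mathring{\nabla}^{2}f\equiv 0$ and $\mathring{Ric}\equiv 0$ on the interior (and, by the explicit formulas, on all of $M$). The boxed expression for $\mathring{\nabla}^{2}f$ then shows that $E^{\flat}\otimes E^{\flat}$ must be proportional to $g$; since $E^{\flat}\otimes E^{\flat}$ has rank at most one while $g$ has rank $n\geq 3$, this forces $E\equiv 0$, so $(M,g,f)$ reduces to a compact vacuum static Einstein system with connected boundary, which by the Kobayashi~\cite{koba}--Lafontaine~\cite{laf} classification must be isometric to the de Sitter system. The main obstacle I anticipate is the boundary computation: verifying that the various terms in $\langle X,\nu\rangle$ really do collapse to the clean expression $|\nabla f|(R^{\partial M}-\tfrac{n-2}{n}R)$ so that, after the integration by parts, only the $R^{\partial M}$ boundary term survives on the right of \eqref{divformula}; the subsequent factorization producing \eqref{geoinq} is then straightforward algebra.
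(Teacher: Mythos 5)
Your proposal is correct and follows essentially the same route as the paper: the vector field $X$ you integrate is exactly $2\mathring{Ric}(\nabla f)$ (see the identity \eqref{RSTipeIdentityEq1}), so your boundary computation reduces to the paper's use of the Gauss equation \eqref{gausseq}, and your pointwise identity for $\tfrac{1}{f}|\mathring{\nabla}^2 f|^2+f|\mathring{Ric}|^2$ together with the quadratic-discriminant factorization in $|E|^2$ reproduces the paper's argument step for step. The only differences are cosmetic: you obtain the algebraic identity by squaring the traceless Hessian equation directly rather than the full Hessian equation, and you close the rigidity case via $E\equiv 0$ and the Kobayashi--Lafontaine classification where the paper invokes Reilly's Obata-type lemma \cite{reilly2}; both are valid.
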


	\begin{proof}
	On integrating (\ref{Identdivric}) and using Stokes' theorem, we obtain

			\begin{eqnarray}
			\label{efg5}
			\int_{M} div\big(\mathring{Ric}(\nabla f)\big)\,dV_{g}&=&\int_{M}\left[f|\mathring{R}ic|^{2}+2f\mathring{R}ic(E,\,E)\right]dV_{g}\nonumber\\
			&&+\frac{n-2}{2n}\int_{M}\langle \nabla R,\,\nabla f\rangle dV_{g}\nonumber\\&=& \int_{M}\left[f|\mathring{R}ic|^{2}+2f\mathring{R}ic(E,\,E)\right]dV_{g}\nonumber\\
			&&+\frac{n-2}{2n}\Big(\int_{\partial M}R\langle \nabla f,\nu\rangle dS_{g}-\int_{M}R\Delta f dV_{g}\Big)\nonumber\\&=& \int_{M}\left[f|\mathring{R}ic|^{2}+2f\mathring{R}ic(E,\,E)\right]dV_{g}\nonumber\\
			&&-\frac{n-2}{2n}\Big(\int_{\partial M}R|\nabla f| dA_{g}+\int_{M}R\Delta f dV_{g}\Big).
		\end{eqnarray}

		On the other hand, it follows from \eqref{gausseq} and Stokes' theorem that
		\begin{eqnarray}
			\label{4eq2}
			\int_{M} div\big(\mathring{R}ic(\nabla f)\big)\,dV_{g}&=&\int_{\partial M}\langle \mathring{R}ic(\nabla f),\nu\rangle dA_{g}\nonumber\\&=&-\int_{\partial M}|\nabla f|\mathring{R}ic(\nu,\,\nu) dA_{g}\nonumber\\&=& -\int_{\partial M}|\nabla f|\left(\frac{n-2}{2n}R -\frac{1}{2}R^{\partial M}\right) dA_{g}.
		\end{eqnarray} This combined with (\ref{efg5}) gives

		\begin{eqnarray}\label{eqmassa}
			\int_{\partial M}\vert\nabla f\vert R^{\partial M} dA_{g}&=& 2\int_{M}\left(f|\mathring{Ric}|^{2}+2f\mathring{R}ic(E,\,E)\right)\,dV_{g}\nonumber\\
			&& -\frac{(n-2)}{n}\int_{M}R\Delta f \,dV_{g}.
		\end{eqnarray}

		Next, since $\mathring{Ric}=Ric-\frac{R}{n}g,$ one deduces from \eqref{ab01} that
		
		$$\nabla^2 f = f\left(\mathring{Ric}+2 E^{\flat}\otimes E^{\flat} -\frac{R}{n(n-1)}g\right),$$ and consequently,

		\begin{eqnarray*}
			\vert\nabla^2f\vert^{2} = f^2\vert\mathring{Ric}\vert^{2} + 4f^2\mathring{Ric}(E,\,E) + 4f^2\vert E\vert^{4} - \frac{4f^2R\vert E\vert^{2}}{n(n-1)} + \frac{f^2R^2}{n(n-1)^2},
		\end{eqnarray*} which can be rewrite as

		\begin{eqnarray*}
			\vert\mathring{\nabla}^2f\vert^{2} + \frac{(\Delta f)^{2}}{n} = f^2\vert\mathring{R}ic\vert^{2} + 4f^2\mathring{R}ic(E,\,E) + 4f^2\vert E\vert^{4} - \frac{4f^2R\vert E\vert^{2}}{n(n-1)} + \frac{f^2R^2}{n(n-1)^2},
		\end{eqnarray*} where we have used that $\mathring{\nabla^2} f = \nabla^2 f-\frac{\Delta f}{n}g.$ By using \eqref{ab02}, we get 
		
		\begin{eqnarray*}
			2\mathring{Ric}(E,\,E) &=& \frac{1}{2f^2}\vert\mathring{\nabla}^2f\vert^{2} - \frac{1}{2}\vert\mathring{Ric}\vert^{2} + \frac{1}{2n}\left(\frac{(n-2)}{(n-1)}R - 2\Lambda\right)^2   - 2\vert E\vert^{4} \nonumber\\
			&&+ \frac{2 R\vert E\vert^{2}}{n(n-1)} - \frac{R^2}{2n(n-1)^2}.
		\end{eqnarray*} Rearranging terms, we obtain

		\begin{eqnarray*}
			\vert\mathring{Ric}\vert^{2}+ 2\mathring{Ric}(E,\,E) &=& \frac{1}{2f^2}\vert\mathring{\nabla}^2f\vert^{2} + \frac{1}{2}\vert\mathring{Ric}\vert^{2} \nonumber\\
			&&+ \frac{(n-3)}{2n(n-1)}R^2 - \frac{2(n-2)}{n(n-1)}\Lambda R + \frac{2\Lambda^2}{n} - 2\vert E\vert^{4} \nonumber\\&&+ \frac{2 R\vert E\vert^{2}}{n(n-1)}.
		\end{eqnarray*} Now, we use \eqref{rrr} to infer

		\begin{eqnarray*}
			\vert\mathring{Ric}\vert^{2}+ 2\mathring{Ric}(E,\,E) &=& \frac{1}{2f^2}\vert\mathring{\nabla}^2f\vert^{2} + \frac{1}{2}\vert\mathring{R}ic\vert^{2} \nonumber\\
			&&+ \frac{(n-3)}{2n(n-1)}(2R\vert E\vert^{2} + 2\Lambda R) - \frac{2(n-2)}{n(n-1)}\Lambda R + \frac{1}{n}(\Lambda R - 2\Lambda\vert E\vert^{2}) \nonumber\\
			&&- 2\vert E\vert^{4} + \frac{2 R\vert E\vert^{2}}{n(n-1)} \nonumber\\
			&=& \frac{1}{2f^2}\vert\mathring{\nabla}^2f\vert^{2} + \frac{1}{2}\vert\mathring{R}ic\vert^{2} - \frac{2(n-1)}{n}\vert E\vert^{4},
		\end{eqnarray*} so that

		\begin{eqnarray*}
			2(f\vert\mathring{R}ic\vert^{2}	+ 2f\mathring{R}ic(E,\,E)) = \frac{1}{f}\vert\mathring{\nabla}^2f\vert^{2} + f\vert\mathring{R}ic\vert^{2} - \frac{4(n-1)}{n}f\vert E\vert^{4}.
		\end{eqnarray*} Besides, upon integrating the above expression, we use (\ref{eqmassa}) to deduce
		\begin{eqnarray*}
			&&	\int_{M}\left[\frac{1}{f}\vert\mathring{\nabla}^2f\vert^{2} + f\vert\mathring{R}ic\vert^{2}\right]dV_{g}\nonumber\\
			&=&
			\int_{\partial M}\vert\nabla f\vert R^{\partial M} dA_{g} + \frac{1}{n}\int_{M}\left[4(n-1)f\vert E\vert^{4}+(n-2)R\Delta f\right]dV_{g},
		\end{eqnarray*} which proves \eqref{divformula}.

		We now address the inequality (\ref{geoinq}). By Definition \ref{def1}, one sees that
		\begin{eqnarray*}
			4(n-1)f\vert E\vert^{4}+(n-2)R\Delta f = 4(n-1)f\vert E\vert^{4}+   \dfrac{2(n-2)R}{n - 1}\left[(n - 2)|E|^2 - 
			\Lambda\right]f.
		\end{eqnarray*} In particular, we consider
		\begin{eqnarray*}
			F(\vert E\vert^{2}) := 4(n-1)\vert E\vert^{4}+   \dfrac{2(n-2)^2R}{(n - 1)}|E|^2 - 
			\dfrac{2(n-2)R}{(n - 1)}\Lambda,
		\end{eqnarray*} where $F$ is a quadratic function of $\vert E\vert^{2}$. Computing its discriminant, one deduces that

		\begin{eqnarray*}
			\triangle = \dfrac{4(n-2)^4R^2}{(n - 1)^2} + 	32(n-2)R\Lambda  =\dfrac{4(n-2)^4R}{(n - 1)^2}\left(R + \frac{8(n-1)^2}{(n-2)^3}\Lambda\right),
		\end{eqnarray*}which is nonnegative because $\Lambda\geq 0.$ Consequently, $F(\vert E\vert^{2}) =0 $ if and only if
		\begin{eqnarray*}
			\vert E\vert^{2} &=& -\dfrac{(n-2)^2R}{4(n - 1)^2} \pm \dfrac{(n-2)^2}{4(n - 1)^2}\sqrt{R\left(R + \frac{8(n-1)^2}{(n-2)^3}\Lambda\right)}\nonumber\\
			&=&\dfrac{(n-2)^2}{4(n - 1)^2}\left[\pm\sqrt{R\left(R + \frac{8(n-1)^2}{(n-2)^3}\Lambda\right)} - R\right]. \nonumber\\
		\end{eqnarray*} From this, it follows that 
		\begin{eqnarray*}
			F(\vert E\vert^{2}) &=& 4(n-1)\left\{\vert E\vert^{2} +  \dfrac{(n-2)^2}{4(n - 1)^2}\left[\sqrt{R\left(R + \frac{8(n-1)^2}{(n-2)^3}\Lambda\right)} +R\right]\right\}\nonumber\\
			&&\times\left\{\vert E\vert^{2} -  \dfrac{(n-2)^2}{4(n - 1)^2}\left[\sqrt{R\left(R + \frac{8(n-1)^2}{(n-2)^3}\Lambda\right)} - R\right]\right\}.
		\end{eqnarray*} Plugging this into \eqref{divformula} yields
	
		\begin{eqnarray}\label{prop2eqb}
			&&	\int_{M}\left[\frac{1}{f}\vert\mathring{\nabla}^2f\vert^{2} + f\vert\mathring{R}ic\vert^{2}\right]dV_{g}\nonumber\\
			&=&
			\int_{\partial M}\vert\nabla f\vert R^{\partial M} dA_{g} \nonumber\\
			&&+ \frac{4(n-1)}{n}\int_{M}f\left\{\vert E\vert^{2} +  \dfrac{(n-2)^2}{4(n - 1)^2}\left[\sqrt{R\left(R + \frac{8(n-1)^2}{(n-2)^3}\Lambda\right)} +R\right]\right\}\nonumber\\
			&&\times\left\{\vert E\vert^{2} -  \dfrac{(n-2)^2}{4(n - 1)^2}\left[\sqrt{R\left(R + \frac{8(n-1)^2}{(n-2)^3}\Lambda\right)} - R\right]\right\}dV_{g},
		\end{eqnarray} so that
		\begin{eqnarray}
		\label{plkmnjh11}
			&& \frac{4(n-1)}{n}\int_{M}f\left\{\vert E\vert^{2} +  \dfrac{(n-2)^2}{4(n - 1)^2}\left[\sqrt{R\left(R + \frac{8(n-1)^2}{(n-2)^3}\Lambda\right)} +R\right]\right\}\nonumber\\
			&&\times\left\{\dfrac{(n-2)^2}{4(n - 1)^2}\left[\sqrt{R\left(R + \frac{8(n-1)^2}{(n-2)^3}\Lambda\right)} - R\right] -\vert E\vert^{2} \right\}dV_{g}\leq 	\int_{\partial M}\vert\nabla f\vert R^{\partial M} dA_{g},
		\end{eqnarray} as asserted.

Finally, equality holds in (\ref{plkmnjh11})  if and only if $\mathring{\nabla}^2 f = 0.$ Hence, the rigidity conclusion follows from Reilly’s generalization of Obata’s theorem to compact manifolds with boundary (see \cite[Lemma 3]{reilly2}).

\end{proof}

The next lemma establishes a key equivalence within our framework by using the condition that the gradient of the potential \(\nabla f\) is parallel to the electric field $E,$ a condition that holds in all known examples (see Section~\ref{examples}). In particular, it will be used to establish a divergence-free tensor, which provides a suitable setting for applying integral identities such as in Theorem~\ref{Poho_gen}.

\begin{remark}
\label{remarkNEW} 
As previously observed, even in the three-dimensional case this condition that the gradient of the potential \(\nabla f\) is parallel to the electric field $E$ appears to be necessary. Indeed, as in the proof of \cite[Proposition~21]{tiarlos}, by setting 
\begin{equation}
\label{tensorTT}
T=Ric-\frac{R}{2}g + 2E^\flat\otimes E^{\flat}-|E|^2 g,
\end{equation} one obtains that  
\begin{eqnarray}
div\,T &=& g^{jk}\nabla_{k}\left(R_{ij}-\frac{R}{2}g_{ij}+2E_{i}E_{j}-|E|^2 g_{ij}\right)\nonumber\\&=& g^{jk}\nabla_{k}R_{ij}-\frac{1}{2}\nabla_{i}R+2g^{jk}(\nabla_{k}E_{i})E_{j}+2g^{jk}(\nabla_{k}E_{j})E_{i}-\nabla_{i}|E|^2\nonumber\\&=& 2(\nabla_{j}E_{i})E_{j}-2(\nabla_{i}E_{j})E_{j}\nonumber\\&=&2(\nabla_{j}E_{i}-\nabla_{i}E_{j})E_{j},
\end{eqnarray} where we have used the twice-contracted second Bianchi identity and the fact that $div(E)=0.$ In particular, $div\,T=0$ if and only if $\nabla_{j}E_{i}=\nabla_{i}E_{j}.$ This fact motivates our condition that the gradient of the potential \(\nabla f\) is parallel to the electric field $E$ in the next lemma. 
\end{remark}

	\begin{lemma}\label{theo0}
		Let $(M^n,\,g,\,f,\,E)$, $n\geq3$, be an electrostatic system. Then $E$ is parallel to $\nabla f$ if and only if $div\, T=0.$
	\end{lemma}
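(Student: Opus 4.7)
The plan is to combine the expression for $\operatorname{div} T$ already obtained in Remark \ref{remarkNEW} with the Maxwell-type equation $d(fE^{\flat})=0$ from Definition \ref{def1}, and then to contract with $E$ so as to exhibit $\operatorname{div} T$ as, up to the factor $f/2$, the pointwise obstruction to $E$ being parallel to $\nabla f$.

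More precisely, I would start from the identity
\[
(\operatorname{div} T)_{i} \;=\; 2(\nabla_{j}E_{i}-\nabla_{i}E_{j})E^{j}
\]
already established in Remark \ref{remarkNEW}. Expanding $d(fE^{\flat})=0$ component-wise gives
\[
f(\nabla_{i}E_{j}-\nabla_{j}E_{i}) \;=\; (\nabla_{j}f)E_{i} - (\nabla_{i}f)E_{j},
\]
and contracting this identity with $E^{j}$ produces
\[
f(\nabla_{i}E_{j}-\nabla_{j}E_{i})E^{j} \;=\; \langle\nabla f,E\rangle E_{i} - |E|^{2}\nabla_{i}f.
\]
Substituting back into the expression for $\operatorname{div} T$, one finds that at every interior point
\[
\tfrac{f}{2}(\operatorname{div} T)_{i} \;=\; |E|^{2}\nabla_{i}f - \langle\nabla f,E\rangle E_{i}.
\]
The right-hand side is a pointwise linear-algebra quantity which vanishes if and only if $E$ is parallel to $\nabla f$ at the point in question: either $E=0$, in which case both sides are trivially zero, or $E\neq 0$ and the vanishing forces $\nabla f = |E|^{-2}\langle\nabla f,E\rangle E$.

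Since $f>0$ on the interior of $M$, the displayed identity already yields the desired equivalence there. On $\partial M$, where $f$ vanishes, the division by $f$ breaks down; however, Proposition \ref{properties}(iv) guarantees that $E$ is parallel to $\nabla f$ along $\partial M$ automatically, while $\operatorname{div} T$ is smooth on $M$, so both conditions extend from the interior to the boundary by continuity. The only mild obstacle I anticipate is precisely this need to divide by $f$; apart from that technicality, the lemma is a short algebraic consequence of the two identities above.
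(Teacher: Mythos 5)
Your proof is correct, but it takes a genuinely different route from the paper's. You obtain the pointwise identity $\tfrac{f}{2}(\operatorname{div}T)_{i}=|E|^{2}\nabla_{i}f-\langle\nabla f,E\rangle E_{i}$ in two lines by combining the formula for $\operatorname{div}T$ from Remark \ref{remarkNEW} with the Maxwell equation $d(fE^{\flat})=0$; both ingredients check out, and the endgame (divide by $f>0$ in the interior, pass to $\partial M$ by continuity together with Proposition \ref{properties}(iv)) is the same device the paper uses, so the division by $f$ is not a real obstacle. The paper instead derives an equivalent identity, Eq. \eqref{fom}, without ever invoking $d(fE^{\flat})=0$: it differentiates the Hessian equation \eqref{ab01}, commutes derivatives via the Ricci identity, and uses the contracted Bianchi identity and the Laplacian equation to arrive at $0=2(E_iE_j-|E|^2g_{ij})\nabla_i f+f\,\nabla_i(2E_iE_j-|E|^2g_{ij})$. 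The trade-off is exactly the one the authors flag in the remark following the lemma: their longer computation shows the equivalence holds in a more general setting where the closedness of $fE^{\flat}$ is not assumed, whereas your argument is shorter and more transparent but uses the full electrostatic system. One cosmetic point shared with the paper's own converse direction: when $\nabla f=0$ but $E\neq 0$ at a point, the vanishing of $|E|^{2}\nabla f-\langle\nabla f,E\rangle E$ only gives the Cauchy--Schwarz equality $|E|^2|\nabla f|^2=\langle E,\nabla f\rangle^2$ rather than a relation $E=h\nabla f$; this is the sense of ``parallel'' the paper adopts, so no correction is needed.
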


\begin{proof}
By using \eqref{ab01}, in tensorial notation\footnote{The Einstein convention of summing over the repeated indices is adopted.}, we notice that
	\begin{eqnarray*}
		\nabla_i\nabla_i\nabla_jf  &=& 	\nabla_ifR_{ij} + 2	\nabla_ifE_iE_j - \frac{R}{(n-1)}	\nabla_jf + f\nabla_iR_{ij} \\
		&&+ 2fE_i\nabla_iE_j - \frac{f}{(n-1)}	\nabla_jR,
	\end{eqnarray*} where we used that $div(E)=0$. Moreover, by the twice contracted second Bianchi identity, we get
	\begin{eqnarray}
	\label{eqnjk123}
		\nabla_i\nabla_i\nabla_jf  &=& 	\nabla_ifR_{ij} + 2	\nabla_if E_iE_j - \frac{R}{(n-1)}	\nabla_jf + \frac{1}{2}f\nabla_jR\nonumber \\
		&& + 2fE_i\nabla_iE_j - \frac{f}{(n-1)}	\nabla_jR\nonumber\\
		&=& 	\nabla_ifR_{ij} + 2\langle\nabla f,\,E\rangle E_j - \frac{R}{(n-1)}	\nabla_jf + \frac{(n-3)}{2(n-1)}f\nabla_jR\nonumber\\
		&&  + 2fE_i\nabla_i E_j.
	\end{eqnarray}

	On the other hand, the Ricci identity:
	\begin{eqnarray*}
		\nabla_i\nabla_j\nabla_kf - 	\nabla_j\nabla_i\nabla_kf = R_{ijkl}\nabla_lf
	\end{eqnarray*} yields
	\begin{eqnarray*}
		g^{ik}\nabla_i\nabla_j\nabla_kf - 	g^{ik}\nabla_j\nabla_i\nabla_kf = g^{ik}R_{ijkl}\nabla_lf,
	\end{eqnarray*}
	so that
	\begin{eqnarray*}
		\nabla_k\nabla_k\nabla_j f - 	\nabla_j\Delta f = R_{jl}\nabla_lf.
	\end{eqnarray*} Rearranging the indices and using (\ref{eqnjk123}), one obtains that

	\begin{eqnarray}
	\label{eqnjk123111}
		\nabla_j\Delta f 
		= 2\langle\nabla f,\,E\rangle E_j - \frac{R}{(n-1)}	\nabla_jf + \frac{(n-3)}{2(n-1)}f\nabla_jR + 2fE_i\nabla_i E_j.
	\end{eqnarray} Now, substituting \eqref{ab02} into (\ref{eqnjk123111}) gives 
	\begin{eqnarray*}
		\nabla_j\left[\left(\frac{(n-2)}{(n-1)}R - 2\Lambda\right)f\right]
		&=& 2\langle\nabla f,\,E\rangle E_j - \frac{R	\nabla_jf}{(n-1)} + \frac{(n-3)}{2(n-1)}f\nabla_jR\nonumber\\&& + 2fE_i\nabla_i E_j,
	\end{eqnarray*} which can be written as
	\begin{eqnarray*}
		\frac{1}{2}f\nabla_jR+\left(R - 2\Lambda\right)\nabla_jf= 2\langle\nabla f,\,E\rangle E_j + 2fE_i\nabla_i E_j,
	\end{eqnarray*} From this, it follows that
	\begin{eqnarray*}
		\nabla_j\left[(R-2\Lambda)f^{2}\right]= 4f\langle\nabla f,\,E\rangle E_j + 4f^2E_i\nabla_i E_j = 4fdiv(fE^{\flat}\otimes E^{\flat}).
	\end{eqnarray*} Now, we use (\ref{rrr}) to infer 

	\begin{eqnarray*}
		\nabla_j(f^{2}\vert E\vert^{2})=\nabla_i(f^{2}\vert E\vert^{2}g_{ij}) = 2fg^{ik}\nabla_k(fE_i E_j).
	\end{eqnarray*} In particular, we have
	$$\nabla_{i}(f^2 |E|^2 g_{ij})=\nabla_{i}f (f|E|^2 g_{ij}) + f\nabla_{i}(f|E|^2 g_{ij})=fg^{ik}\nabla_k(2fE_i E_j).$$ Since $f>0$ in the interior of $M^n,$ one deduces that  
	\begin{eqnarray*}
		\vert E\vert^{2}\nabla_jf= \nabla_i(2fE_i E_j - f\vert E\vert^{2}g_{ij}).
	\end{eqnarray*} Therefore,
	\begin{eqnarray*}
		\vert E\vert^{2}\nabla_jf= (2E_i E_j - \vert E\vert^{2}g_{ij})\nabla_i f + f\nabla_i(2E_i E_j - \vert E\vert^{2}g_{ij}),
	\end{eqnarray*} which can be reformulated as		
	\begin{eqnarray}\label{fom}
		0= 2(E_i E_j - \vert E\vert^{2}g_{ij})\nabla_i f + f\nabla_i(2E_i E_j - \vert E\vert^{2}g_{ij}).
	\end{eqnarray}

If $E$ is parallel to $\nabla f,$ there exists a function $h$ on $M^n$ such that $E = h\nabla f.$ Thus, one easily verifies that
	\begin{eqnarray*}
		(E_i E_j - \vert E\vert^{2}g_{ij})\nabla_i f &=& \langle E,\,\nabla f\rangle E_j - \vert E\vert^{2}\nabla_j f\nonumber\\&=& h^2\langle \nabla f,\,\nabla f\rangle \nabla_jf - h^2\vert \nabla f\vert^{2}\nabla_j f = 0.
	\end{eqnarray*}	
Then, it follows from (\ref{fom}) that 
	\begin{eqnarray}
	\label{kjhgf019}
		\nabla_i(2E_i E_j - \vert E\vert^{2}g_{ij})=div \left(2E^\flat\otimes E^{\flat} - \vert E\vert^{2}g\right)=div\,T=0.
	\end{eqnarray}

Conversely, if $\nabla_i(2E_i E_j - \vert E\vert^{2}g_{ij})=0,$ one obtains from (\ref{fom}) that
	\begin{eqnarray*}
		0= 2(E_i E_j - \vert E\vert^{2}g_{ij})\nabla_i f,
	\end{eqnarray*}
	and hence,
	\begin{eqnarray*}
		0= 2(E_i E_j - \vert E\vert^{2}g_{ij})\nabla_i f\nabla_j f,
	\end{eqnarray*}
	which is equivalent to 
	\begin{eqnarray*}
		\vert E\vert^{2}\vert\nabla f\vert^{2} = \langle E,\,\nabla f\rangle^{2}.
	\end{eqnarray*} Then, $E$ is parallel to $\nabla f$. So, the proof is completed. 
\end{proof}

	\begin{remark}
	An advantage of the proof of Lemma~\ref{theo0} is that it does not rely on the condition $d(fE^{\flat})=0.$ In other words, the result holds in a more general setting.
	\end{remark}

Next, we establish the following proposition, which holds in a  general context, allowing \( M^n \) to be non-compact or without boundary.

\begin{proposition}
\label{propKa}
   	Let $(M^n,\, g,\, f,\, E)$ be an electrostatic system such that $E$ is parallel to $\nabla f.$ If $\nabla^{2}f=\dfrac{\Delta f}{n}g,$ then $|E|$ and the scalar curvature $R$ are constants. 
	
	\end{proposition}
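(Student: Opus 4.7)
The plan is to compute $Ric(\nabla f)$ in two independent ways and then compare the results. The hypothesis $\nabla^{2}f=\tfrac{\Delta f}{n}g$ is strong enough, via a Ricci commutation identity, to force $Ric(\nabla f)$ to be a gradient, while the electrostatic equation gives a separate algebraic expression for the same quantity; reconciling the two will force $|E|^{2}$ to be constant on the set where $f>0$.

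Concretely, my first step would be to apply the standard Ricci commutation formula to the third covariant derivative of $f$: setting $\phi = \Delta f / n$, the hypothesis reads $\nabla_{j}\nabla_{k}f = \phi\, g_{jk}$, so $\nabla_{i}\nabla_{j}\nabla_{k}f = (\nabla_{i}\phi)\, g_{jk}$. Antisymmetrizing in $(i,j)$ and tracing over $(i,k)$ yields the familiar identity
\[
Ric(\nabla f) \;=\; -(n-1)\,\nabla\phi \;=\; -\frac{n-1}{n}\,\nabla(\Delta f).
\]
Next, I would substitute the hypothesis into the first equation of \eqref{s1} and solve for the Ricci tensor, obtaining
\[
Ric \;=\; \frac{\Delta f}{nf}\, g \;+\; \frac{2\Lambda}{n-1}\, g \;-\; 2E^{\flat}\otimes E^{\flat} \;+\; \frac{2|E|^{2}}{n-1}\, g.
\]
Evaluating on $\nabla f$ and using the identity $(E^{\flat}\otimes E^{\flat})(\nabla f) = |E|^{2}\nabla f$, which is immediate from the hypothesis that $E$ is parallel to $\nabla f$, together with the second equation of \eqref{s1}, a short algebraic simplification yields
\[
Ric(\nabla f) \;=\; \frac{2}{n}\bigl(\Lambda - (n-2)|E|^{2}\bigr)\nabla f.
\]

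My third step would be to equate the two expressions for $Ric(\nabla f)$, which gives $\nabla(\Delta f) = \tfrac{2}{n-1}\bigl((n-2)|E|^{2}-\Lambda\bigr)\nabla f$. Differentiating the trace formula $\Delta f = \tfrac{2}{n-1}((n-2)|E|^{2}-\Lambda)f$ directly produces an alternative expression for $\nabla(\Delta f)$ that carries an extra term proportional to $f\,\nabla(|E|^{2})$. Matching the two expressions cancels the terms proportional to $\nabla f$ and leaves precisely
\[
\tfrac{2(n-2)}{n-1}\,f\,\nabla(|E|^{2}) \;=\; 0.
\]
Since $n\geq 3$ and $f>0$ on the interior of $M$, this forces $|E|^{2}$ to be locally constant there, and hence globally constant by connectedness and continuity. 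The constancy of the scalar curvature then follows immediately from the identity $R = 2\Lambda + 2|E|^{2}$ of Proposition~\ref{properties}(i).

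There is no deep obstruction in this argument — it is a chain of fairly direct computations — but one minor subtlety worth flagging is that the identity $(E^{\flat}\otimes E^{\flat})(\nabla f) = |E|^{2}\nabla f$ must also be justified on the (possibly nonempty) zero sets of $E$ or of $\nabla f$. Fortunately both sides vanish trivially at such points, so the identity is in fact global and no separate treatment of critical points is required for the final conclusion.
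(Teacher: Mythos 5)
Your proof is correct, but it follows a genuinely different route from the paper's. The paper first rewrites the hypothesis as the traceless pointwise identity $\mathring{Ric}=\tfrac{2}{n}|E|^{2}g-2E^{\flat}\otimes E^{\flat}$ and then takes divergences of both sides: the left side is handled by the twice-contracted second Bianchi identity, while the right side is handled by Lemma~\ref{theo0}, which converts the parallelism hypothesis into $\operatorname{div}T=0$; comparing the two divergences (together with $R=2\Lambda+2|E|^{2}$) yields $\tfrac{2(n-2)}{n}\nabla|E|^{2}=0$. You instead invoke the classical Obata-type commutation identity $Ric(\nabla f)=-\tfrac{n-1}{n}\nabla(\Delta f)$, which is valid whenever $\nabla^{2}f=\tfrac{\Delta f}{n}g$, and compare it with the algebraic expression for $Ric(\nabla f)$ obtained from the structure equations; here parallelism enters only through the pointwise identity $(E^{\flat}\otimes E^{\flat})(\nabla f)=|E|^{2}\nabla f$, so you bypass Lemma~\ref{theo0} and the divergence-free tensor $T$ entirely. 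I checked your coefficients: the $\Lambda$ and $|E|^{2}$ terms do combine to give $Ric(\nabla f)=\tfrac{2}{n}\bigl(\Lambda-(n-2)|E|^{2}\bigr)\nabla f$, and the final cancellation leaves exactly $\tfrac{2(n-2)}{n-1}f\,\nabla|E|^{2}=0$, matching the paper's conclusion up to an inessential constant. Your approach is arguably more elementary (it needs only first-order information about $|E|^{2}$ through the trace equation and no divergence computation on $E^{\flat}\otimes E^{\flat}$), whereas the paper's approach is consistent with the divergence-formula machinery it uses elsewhere and reuses Lemma~\ref{theo0}, which it has to establish anyway. Your remark about the degenerate points where $E$ or $\nabla f$ vanishes, and the restriction to the interior where $f>0$ followed by continuity, are both handled appropriately.
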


\begin{proof}
Initially, by using $\nabla^{2}f=\frac{\Delta f}{n}g$ into the first equation of Definition \ref{def1}, we obtain

	\begin{eqnarray}\label{def1Hess}
			\frac{\Delta f}{n}g = f\left({Ric} - \dfrac{2}{n-1}\Lambda g + 2E^\flat\otimes E^\flat - \dfrac{2}{n-1}|E|^2 g\right),
		\end{eqnarray} so that
\begin{eqnarray*}
			\dfrac{2}{n(n - 1)}\left[(n - 2)|E|^2 - 
			\Lambda\right]g = ({Ric} - \dfrac{2}{n-1}\Lambda g + 2E^\flat\otimes E^\flat - \dfrac{2}{n-1}|E|^2g),
		\end{eqnarray*} which can be rewrite as
		
		\begin{eqnarray}\label{6}
			{Ric} - \frac{2\Lambda}{n} g = 2\left(\frac{2}{n}\vert E\vert^{2}g - E^\flat\otimes E^\flat\right).
		\end{eqnarray} So, one easily verifies from (\ref{rrr}) that
		
		\begin{equation}
		\label{lkm671}
		Ric - \frac{R}{n}g = \frac{2}{n}|E|^2 g - 2E^\flat\otimes E^\flat.
		\end{equation} In particular, $(M,\,g)$ is Einstein if and only if 
		\begin{eqnarray*}
			E^\flat\otimes E^\flat = \frac{\vert E\vert^{2}}{n}g.
		\end{eqnarray*}

		Next, by taking the divergence of (\ref{lkm671}) and using Lemma \ref{theo0}, we arrive at
		
		\begin{eqnarray*}
		div\left(Ric - \frac{R}{n}g\right)=2\left(\frac{2-n}{2n}\right)\nabla |E|^2.
		\end{eqnarray*} By applying the twice-contracted second Bianchi identity together with equation (\ref{rrr}), we obtain $$\frac{2(n-2)}{n}\nabla |E|^2 =0,$$ which implies that $|E|^2$ is constant. Consequently, the scalar curvature must also be constant.

\end{proof}

To conclude this section, we recall two important integral identities that will be useful in our context. The first one is a very important generalization of the classical Reilly formula \cite[Theorem~1]{reilly2}, recently established by Qiu and Xia \cite{qiu} (see also \cite{LiXia}).

	\begin{proposition}[Generalized Reilly Identity \cite{qiu}]\label{prop1}
		Let $(M^n,\,g)$ be a compact Riemannian manifold with boundary $\partial M.$ Given two functions $f$ and $u$ on $M^n$ and a constant $k,$ we have
		\begin{eqnarray*}
			&&\int_{M}f\left[\left(\Delta u +knu\right)^2 - \vert \nabla^2u+kug\vert^{2}\right]dV_{g} = (n-1)k\int_{M}\left(\Delta f +nkf\right)u^2dV_{g}\\
			&&+\int_{M}\left(\nabla^2f-(\Delta f)g - 2(n-1)kfg +fRic\right)(\nabla u,\,\nabla u)dV_{g}\\
			&&+\int_{\partial M}f\left[2\Delta_{\partial M}u\frac{\partial u}{\partial\nu} + H\left(\frac{\partial u}{\partial\nu}\right)^2 + A(\nabla_{\partial M}u,\,\nabla_{\partial M}u) + 2(n-1)ku\frac{\partial u}{\partial\nu}\right]dA_{g}\\
			&&+\int_{\partial M}\frac{\partial f}{\partial\nu}\left[\vert\nabla_{\partial M}u\vert^2 -(n-1)ku^2\right]dA_{g},
		\end{eqnarray*}
		where $H$ and $A$ stand for the mean curvature and second fundamental form of $\partial M$, respectively.
	\end{proposition}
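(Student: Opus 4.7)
My approach is to reduce the identity to the standard weighted Reilly formula (the $k=0$ case) through a short algebraic expansion that isolates the $k$-dependent corrections, then handle those corrections by direct integration by parts.

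\textbf{Algebraic reduction.} Using $\langle \nabla^{2}u,g\rangle=\Delta u$ and $|g|^{2}=n$, expanding the two squares yields
\[
(\Delta u + knu)^{2} - |\nabla^{2}u + ku\,g|^{2} = (\Delta u)^{2} - |\nabla^{2}u|^{2} + 2(n-1)k\,u\Delta u + n(n-1)k^{2}u^{2}.
\]
Multiplying by $f$ and integrating over $M$ splits the problem into the weighted Reilly identity applied to $(\Delta u)^{2}-|\nabla^{2}u|^{2}$, plus integration-by-parts analysis of $\int_{M}fu\Delta u$ and $\int_{M}fu^{2}$.

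\textbf{Weighted Reilly formula and $k$-corrections.} For the first piece I would start from the Bochner formula $\tfrac{1}{2}\Delta|\nabla u|^{2}=|\nabla^{2}u|^{2}+\langle\nabla\Delta u,\nabla u\rangle+Ric(\nabla u,\nabla u)$, multiply by $f$, and integrate. Repeated application of Stokes' theorem (twice on $\int_{M}f\Delta|\nabla u|^{2}$, once on $\int_{M}f\langle\nabla\Delta u,\nabla u\rangle$, and once on the cross term $\int_{M}\Delta u\,\langle \nabla f,\nabla u\rangle$ via the divergence identity $\Delta u\langle\nabla f,\nabla u\rangle=\mathrm{div}(\langle\nabla f,\nabla u\rangle\nabla u)-\nabla^{2}f(\nabla u,\nabla u)-\tfrac{1}{2}\langle\nabla f,\nabla|\nabla u|^{2}\rangle$) assembles the interior contribution $\int_{M}(fRic+\nabla^{2}f-(\Delta f)g)(\nabla u,\nabla u)$ together with explicit boundary data. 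For the second piece, two integrations by parts give
\[
\int_{M}fu\Delta u = \tfrac{1}{2}\int_{M}u^{2}\Delta f - \int_{M}f|\nabla u|^{2} + \int_{\partial M}fu\tfrac{\partial u}{\partial\nu}\,dA_{g} - \tfrac{1}{2}\int_{\partial M}u^{2}\tfrac{\partial f}{\partial\nu}\,dA_{g},
\]
using $\int_{M}u\langle\nabla f,\nabla u\rangle=\tfrac{1}{2}\int_{\partial M}u^{2}\tfrac{\partial f}{\partial\nu}-\tfrac{1}{2}\int_{M}u^{2}\Delta f$. Multiplying this by $2(n-1)k$ and adding $n(n-1)k^{2}\int_{M}fu^{2}$ produces exactly the $(n-1)k\int_{M}(\Delta f+nkf)u^{2}$ summand, the interior correction $-2(n-1)k\int_{M}fg(\nabla u,\nabla u)$, and the two boundary $k$-terms appearing in the stated formula.

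\textbf{Boundary bookkeeping (main obstacle).} The delicate step is reorganizing the raw boundary integrals collected above into the four clean contributions $2f\Delta_{\partial M}u\tfrac{\partial u}{\partial\nu}$, $fH(\tfrac{\partial u}{\partial\nu})^{2}$, $fA(\nabla_{\partial M}u,\nabla_{\partial M}u)$, and $\tfrac{\partial f}{\partial\nu}|\nabla_{\partial M}u|^{2}$ (together with their $k$-counterparts). I would decompose $\nabla u=\nabla_{\partial M}u+\tfrac{\partial u}{\partial\nu}\nu$ and $\nabla f=\nabla_{\partial M}f+\tfrac{\partial f}{\partial\nu}\nu$, use $\Delta u=\Delta_{\partial M}u+H\tfrac{\partial u}{\partial\nu}+\nabla^{2}u(\nu,\nu)$ on $\partial M$, and apply the Weingarten identity $\nabla^{2}u(\nu,X)=X(\tfrac{\partial u}{\partial\nu})+A(\nabla_{\partial M}u,X)$ for tangent vectors $X$ in order to extract both the $H(\tfrac{\partial u}{\partial\nu})^{2}$ contribution and the $A(\nabla_{\partial M}u,\nabla_{\partial M}u)$ contribution. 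The troublesome cross term $\tfrac{\partial u}{\partial\nu}\langle \nabla_{\partial M}f,\nabla_{\partial M}u\rangle$ produced by the integration by parts of $\Delta u\langle\nabla f,\nabla u\rangle$ is absorbed by a tangential integration by parts $\int_{\partial M}\mathrm{div}_{\partial M}(f\tfrac{\partial u}{\partial\nu}\nabla_{\partial M}u)\,dA_{g}=0$ on the closed hypersurface $\partial M$, which redistributes it into $f\Delta_{\partial M}u\tfrac{\partial u}{\partial\nu}$ and leaves $|\nabla_{\partial M}u|^{2}\tfrac{\partial f}{\partial\nu}$ as the only remaining tangential-gradient contribution. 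Assembling all pieces gives the stated identity.
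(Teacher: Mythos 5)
The paper does not prove this proposition: it is quoted verbatim from Qiu--Xia \cite{qiu} (see also \cite{LiXia}) and used as a black box, so there is no internal proof to compare against. Your argument is correct and is essentially the standard derivation, which is also the route taken in the cited source: the algebraic expansion $(\Delta u+knu)^2-|\nabla^2u+kug|^2=(\Delta u)^2-|\nabla^2u|^2+2(n-1)ku\Delta u+n(n-1)k^2u^2$ is right, the double integration by parts of $\int_M fu\Delta u$ reproduces exactly the four $k$-dependent terms of the statement (the $(n-1)k\int_M(\Delta f+nkf)u^2$ term, the $-2(n-1)kfg$ correction to the interior tensor, and the two boundary $k$-terms), and the Bochner-plus-Stokes treatment of $\int_M f[(\Delta u)^2-|\nabla^2u|^2]$ together with the boundary decomposition $\Delta u=\Delta_{\partial M}u+H\frac{\partial u}{\partial\nu}+\nabla^2u(\nu,\nu)$ and the tangential integration by parts on the closed hypersurface $\partial M$ assembles the remaining interior and boundary contributions as claimed. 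The only point to watch is the sign convention for the second fundamental form in your Weingarten identity, which must be paired consistently with the convention implicit in $H$ (the formula as stated requires $A(X,Y)=\langle\nabla_X\nu,Y\rangle$ with $\nu$ the outward normal, so that $\nabla^2u(X,\nu)=X\bigl(\frac{\partial u}{\partial\nu}\bigr)-A(X,\nabla_{\partial M}u)$); this is a bookkeeping issue, not a gap.
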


The second is a Pohozaev-type identity, originally observed by Schoen in \cite{schoen} in the context of the Einstein tensor, and subsequently extended to its present form for \( (0,2) \)-tensors $B$ that are locally conserved, that is, \( div\,B = 0 \) (divergence-free).

\begin{theorem}[Generalized Pohozaev-Schoen Identity \cite{gover}]
\label{Poho_gen}
Let \((M^n, \,g)\) be a compact Riemannian manifold and let \(X \in \mathfrak{X}(M)\) be a smooth vector field. If \(B\) is a divergence-free symmetric \( (0,2) \)-tensor, then
\begin{equation}\label{Poho_eq}
\int_{M} X(\operatorname{tr} B) \, dV_{g} = \frac{n}{2} \int_{M} \langle \mathring{B}, \mathcal{L}_{X}g \rangle \, dV_{g} - n \int_{\partial M} \mathring{B}(X, \nu) \, dA_{g},
\end{equation}
where \(\operatorname{tr} B\) denotes the trace of \(B\) with respect to \(g\), \(\mathring{B} = B - \frac{\operatorname{tr} B}{n}g\) is the traceless part of \(B\), \(\mathcal{L}_{X}g\) is the Lie derivative of \(g\) in the direction of \(X\), and \(\nu\) is the outward unit normal vector field along \(\partial M\).
\end{theorem}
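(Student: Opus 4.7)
The plan is to start from the identity $\operatorname{div}(B(X)) = \langle \operatorname{div} B, X\rangle + \langle B, \nabla X\rangle$ for any smooth symmetric $(0,2)$-tensor $B$ and vector field $X$, use the hypothesis $\operatorname{div} B = 0$, and then apply the divergence theorem. Since $B$ is symmetric, $\langle B,\nabla X\rangle = \tfrac{1}{2}\langle B,\mathcal{L}_X g\rangle$, because only the symmetric part of $\nabla X$ contributes to the pairing, and $\mathcal{L}_X g = 2\,\mathrm{Sym}(\nabla X^{\flat})$. Thus the core identity becomes
\begin{equation*}
\operatorname{div}(B(X)) \;=\; \tfrac{1}{2}\langle B,\mathcal{L}_X g\rangle.
\end{equation*}

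Next, I would decompose $B = \mathring{B} + \tfrac{\operatorname{tr} B}{n}g$ and exploit that $\langle g,\mathcal{L}_X g\rangle = 2\operatorname{div} X$, obtaining
\begin{equation*}
\tfrac{1}{2}\langle B,\mathcal{L}_X g\rangle \;=\; \tfrac{1}{2}\langle \mathring{B},\mathcal{L}_X g\rangle + \tfrac{\operatorname{tr} B}{n}\operatorname{div} X .
\end{equation*}
Integrating both sides of $\operatorname{div}(B(X)) = \tfrac{1}{2}\langle \mathring{B},\mathcal{L}_X g\rangle + \tfrac{\operatorname{tr} B}{n}\operatorname{div} X$ over $M$, and applying the divergence theorem to the left-hand side, yields
\begin{equation*}
\int_{\partial M} B(X,\nu)\,dA_g \;=\; \tfrac{1}{2}\int_M \langle \mathring{B},\mathcal{L}_X g\rangle\,dV_g + \tfrac{1}{n}\int_M \operatorname{tr} B\cdot\operatorname{div} X\,dV_g .
\end{equation*}

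Then I would integrate the last term by parts: $\int_M \operatorname{tr} B\cdot\operatorname{div} X\,dV_g = -\int_M X(\operatorname{tr} B)\,dV_g + \int_{\partial M}\operatorname{tr} B\,\langle X,\nu\rangle\,dA_g$. Combining the two boundary contributions gives precisely $B(X,\nu) - \tfrac{\operatorname{tr} B}{n}\langle X,\nu\rangle = \mathring{B}(X,\nu)$, so after rearranging the terms I obtain
\begin{equation*}
\tfrac{1}{n}\int_M X(\operatorname{tr} B)\,dV_g \;=\; \tfrac{1}{2}\int_M \langle \mathring{B},\mathcal{L}_X g\rangle\,dV_g - \int_{\partial M}\mathring{B}(X,\nu)\,dA_g,
\end{equation*}
and multiplying by $n$ recovers the claimed identity \eqref{Poho_eq}.

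There is no real obstacle here: the argument is a clean integration by parts combined with a trace--traceless split. The only subtle point to keep in mind is the sign convention for $\nu$ (outward unit normal) and the careful algebraic grouping that merges the two boundary integrals into a single integral of $\mathring{B}(X,\nu)$. No additional regularity or compactness beyond what is already assumed is needed, and nowhere do we use positivity or any curvature hypothesis on $B$, so the result holds in the stated generality.
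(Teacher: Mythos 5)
Your argument is correct and complete: the identity $\operatorname{div}(B(X)) = (\operatorname{div}B)(X) + \langle B,\nabla X\rangle$, the symmetrization $\langle B,\nabla X\rangle = \tfrac{1}{2}\langle B,\mathcal{L}_X g\rangle$, the trace--traceless split, and the two integrations by parts all check out, and the boundary terms do combine into $\int_{\partial M}\mathring{B}(X,\nu)\,dA_g$ exactly as you claim. Note that the paper itself gives no proof of this statement — it is recalled as a known result and attributed to \cite{gover} — so there is nothing internal to compare against; your derivation is the standard one underlying that reference.
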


	\subsection{Examples}
	\label{examples} We now present examples of electrostatic manifolds with boundary for arbitrary dimensions $n\geq 3.$

	\begin{example}[\textbf{Vacuum static systems}]
	\label{Exa1}
The vacuum static systems (those for which \( E \equiv 0 \)) are trivial examples of electrostatic systems. Among them, the {\bf de Sitter system} corresponds to the standard hemisphere \( (\mathbb{S}_{+}^{n}, g) \) endowed with the metric \( g = dr^{2} + \sin^{2}(r) g_{\mathbb{S}^{n-1}} \) and the lapse function by \( f(r) = \cos(r) \) with \( r \leq \pi/2 \) representing the height function. 

Another distinguished vacuum static example is the {\bf Nariai system}, given by \( M^{n} = [0, \pi] \times \mathbb{S}^{n-1} \), with metric \( g = ds^{2} + (n-2) g_{\mathbb{S}^{n-1}} \) and lapse function \( f(s) = \sin(s) \), which yields a compact, oriented electrostatic manifold whose boundary consists of two disconnected components.
\end{example}

\begin{example}[\textbf{Reissner-Nordstrom de Sitter (RNdS)}]
The \((n+1)\)-dimensional RNdS spacetime is a three-parameter family (characterized by the ADM mass \(\mathfrak{M}_{ADM}\), the charge \(Q\), and the cosmological constant \(\Lambda\)) of static, electrically charged solutions to the Einstein equations. Consider the system \((M^n,\,g_{_{\mathrm{RNdS}}},\,f,\,E)\), where
\[
M^n = I \times \mathbb{S}^{n-1},
\]
for some interval \( I \subset \mathbb{R} \) determined by the roots of the static potential
\[
f(r)^2 = 1 - \frac{2\mathfrak{M}_{ADM}}{r^{n-2}} + \frac{Q^2}{r^{2(n-2)}} - \frac{2\Lambda r^2}{n(n-1)},
\]
and
\[
E = \sqrt{\frac{(n-1)(n-2)}{2}} \frac{Q}{r^{n-1}} f(r) \, \partial_r.
\]
The metric is given by
\[
g_{_{\mathrm{RNdS}}} = f(r)^{-2} \, dr^2 + r^2 g_{\mathbb{S}^{n-1}},
\]
where \( g_{\mathbb{S}^{n-1}} \) denotes the standard metric on the unit sphere \( \mathbb{S}^{n-1} \) and \( r \) represents the radial coordinate.  When the cosmological constant vanishes identically, the space is referred to as the {\it Reissner--Nordstr\"om} (RN) space.
\end{example}	
	
\begin{example}[\textbf{Majumdar–Papapetrou (MP)}]	
An important electrostatic system with $\Lambda=0$ is the {\it Majumdar–Papapetrou} (see \cite{chrusciel1999,Lucietti}), which is related to an extremal RN solution. The Majumdar-Papapetrou (MP) solution to the Einstein–Maxwell theory represents the static equilibrium of an arbitrary number of charged black holes whose mutual electric repulsion exactly balances their gravitational attraction. The metric tensor of MP is given by
	\begin{eqnarray}\label{MPmetric}
		\hat{g}=-f^{2}dt^{2}+f^{-2/(n-2)}(dx_1^{2}+\ldots+dx_n^{2}),
	\end{eqnarray}
	in Cartesian coordinates $x=(x_1,\,\ldots,\,x_n)$ and $\widehat{M}^{n+1}=(\mathbb{R}^{n}\backslash\{{a_i}\}_{i=1}^I)\times\mathbb{R},$ for a finite set of points ${a_i}\in\mathbb{R}^{n},$ where
	\begin{eqnarray}\label{harmonic function MP}
		\frac{1}{f({x})}=1+\displaystyle\sum_{i=1}^{I}\frac{m_i}{r_i^{n-2}};\quad r_i=|{x}-{a_i}|,
	\end{eqnarray}
	for some positive constants $m_i$.	
\end{example}

\begin{example}[\textbf{Cold Black Hole}]
The {\it Cold Black Hole system} is defined on \( [0, +\infty) \times \mathbb{S}^{n-1} \) endowed with the metric $g = ds^{2} + \rho^{2} g_{\mathbb{S}^{n-1}}$ and lapse function $f(s) = \sinh(\tau s)$, where 
\[
\tau = \sqrt{\left( \frac{(n-2)^2 Q^{2}}{\rho^{2(n-1)}} - \frac{2\Lambda}{n-1} \right)}.
\] The electric field is given by
\[
E = \sqrt{\frac{(n-1)(n-2)}{2}} \frac{Q}{\rho^{n-1}} \, \partial_s.
\]
\end{example}		

\begin{example}[\textbf{Ultracold Black Hole}]
The {\it ultracold black hole system} is defined on $\left[0,\,+\infty\right)\times\mathbb{S}^{n-1}$ with metric $g = ds^{2} + \rho^{2}g_{\mathbb{S}^{n-1}}$ and lapse function $f(s) = s$. The electric field is given by 
$$E =\sqrt{\dfrac{\Lambda}{(n-2)}}\partial_s,$$
where 
$$Q^2=\frac{1}{n-1}\left(\frac{(n-2)^2}{2\Lambda}\right)^{n-2}=\frac{\rho^{2(n-2)}}{n-1}.$$
\end{example}

\begin{example}[\textbf{Charged Nariai}]
Based on the Nariai system, one can construct the {\it Charged Nariai system} defined on \( \left[0, \frac{\pi}{\gamma} \right] \times \mathbb{S}^{n-1} \) with metric \( g = ds^{2} + \rho^{2} g_{\mathbb{S}^{n-1}} \) and lapse function \( f(s) = \sin(\gamma s) \), where \( \gamma = \sqrt{ \frac{2\Lambda}{n-1} - \frac{(n-2)^2 Q^{2}}{\rho^{2(n-1)}} } \). Moreover, the electric field is given by $$E = \sqrt{\frac{(n-1)(n-2)}{2}} \frac{Q}{\rho^{n-1}} \partial_s.$$ 
\end{example}

\section{Boundary area estimates and rigidity results}

In this section, we establish results concerning boundary area estimates and their implications. We begin with a slightly more general result, from which Theorem~\ref{main}  follows as a particular case, assuming additionally that the boundary is connected.

\begin{theorem}\label{teoAgen}
Let $(M^n,\, g,\, f,\,E)$ be a compact electrostatic system with  boundary $\partial M=\displaystyle\cup_{i=1}^{l} \Sigma_{i}$, where $\Sigma_{i}$ are the connected components of $\partial M$. If $|E|^2\leq\frac{\Lambda}{(n-2)},$ then
\[
\sqrt{\frac{\alpha(n+2)}{2n\beta^{2}}} \left(\frac{\left( \sum_{i} \kappa_{i} |\Sigma_{i}| \right)^3}{\sum_{i} \kappa_{i}^3 |\Sigma_{i}|} \right)^{\frac{1}{2}}\leq Vol(M),
\]
where $ \alpha = \min_{M}\dfrac{2}{n - 1}\left[\Lambda - (n - 2)|E|^2\right]$, $ \beta = \max_{M} \dfrac{2}{n - 1}\left[\Lambda - (n - 2)|E|^2\right]$ and $\kappa_{i}:=|\nabla f|\Big|_{\Sigma_{i}}$ are the surface gravities. Moreover, equality holds if and only if $(M^n,\, g,\, f)$ is isometric to the de Sitter system. 
\end{theorem}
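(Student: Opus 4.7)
Set $\phi := \tfrac{2}{n-1}\!\left[\Lambda - (n-2)|E|^{2}\right]$, so that the second equation in \eqref{s1} reads $\Delta f + \phi f = 0$, and the hypothesis $|E|^{2} \le \Lambda/(n-2)$ gives $\alpha \le \phi \le \beta$ on $M$. Since $\nu = -\nabla f/|\nabla f|$ along each $\Sigma_{i}$, the divergence theorem immediately produces the bridge identity
\[
\sum_{i=1}^{\ell}\kappa_{i}|\Sigma_{i}|
= -\int_{\partial M}\langle\nabla f,\nu\rangle\, dA_{g}
= -\int_{M}\Delta f\, dV_{g}
= \int_{M}\phi f\, dV_{g},
\]
which links the weighted boundary sum $\sum\kappa_{i}|\Sigma_{i}|$ to a bulk integral of $f$. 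A parallel divergence computation applied to the vector field $|\nabla f|^{2}\nabla f$, together with the Hessian equation \eqref{ab01} and $R = 2\Lambda + 2|E|^{2}$ from Proposition~\ref{properties}(i), similarly expresses $\sum_{i}\kappa_{i}^{3}|\Sigma_{i}| = -\int_{\partial M}|\nabla f|^{2}\langle\nabla f,\nu\rangle\, dA_{g}$ as a combination of $\int_{M}fRic(\nabla f,\nabla f)$, $\int_{M}f\langle E,\nabla f\rangle^{2}$ and $\int_{M}f|\nabla f|^{2}$.

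The analytic core is to apply the generalized Reilly identity (Proposition~\ref{prop1}) with $u = f$ and a free real parameter $k$. Because $f \equiv 0$ on $\partial M$, every boundary integral in Proposition~\ref{prop1} collapses. Splitting $\nabla^{2}f + kfg$ into its traceless part $\mathring{\nabla}^{2}f$ and its pure-trace part $\tfrac{nk-\phi}{n}fg$, the left-hand side of Reilly's identity becomes $\int_{M}f\bigl[\tfrac{n-1}{n}(nk-\phi)^{2}f^{2} - |\mathring{\nabla}^{2}f|^{2}\bigr]dV_{g}$. Substituting \eqref{ab01} into $T_{k} := \nabla^{2}f - (\Delta f)g + fRic - 2(n-1)kfg$ evaluated on $(\nabla f,\nabla f)$ and using the identity $\tfrac{R}{n-1} - \phi = 2|E|^{2}$, one obtains
\[
T_{k}(\nabla f,\nabla f) = 2fRic(\nabla f,\nabla f) - 2f\bigl(|E|^{2}|\nabla f|^{2} - \langle E,\nabla f\rangle^{2}\bigr) - 2(n-1)kf|\nabla f|^{2},
\]
whose middle term has a definite sign by Cauchy-Schwarz (this is precisely the sub-static inequality from Proposition~\ref{properties}(vi)). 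Discarding the non-negative quantity $\int_{M}f|\mathring{\nabla}^{2}f|^{2}\,dV_{g}$ then yields an inequality coupling $\int_{M}(nk-\phi)^{2}f^{3}$, $\int_{M}fRic(\nabla f,\nabla f)$ and $\int_{M}f|\nabla f|^{2}$, with the parameter $k$ still to be chosen.

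To finish, I would combine this bulk inequality with the two bridge identities above, apply Cauchy-Schwarz in the form $\bigl(\int_{M}\phi f\,dV_{g}\bigr)^{2} \le \beta^{2}\,Vol(M)\!\int_{M}f^{2}\,dV_{g}$ together with its analogs for $f^{3}$-integrals, and then optimize the free parameter $k$. This optimization is designed to extract exactly the sharp coefficient $\sqrt{\alpha(n+2)/(2n\beta^{2})}$; the discrete Cauchy-Schwarz $\bigl(\sum_{i}\kappa_{i}|\Sigma_{i}|\bigr)^{2} \le \bigl(\sum_{i}\kappa_{i}^{3}|\Sigma_{i}|\bigr)\bigl(\sum_{i}\kappa_{i}^{-1}|\Sigma_{i}|\bigr)$, chained with the bulk estimate, is what produces the boundary combination $\tfrac{(\sum_{i}\kappa_{i}|\Sigma_{i}|)^{3}}{\sum_{i}\kappa_{i}^{3}|\Sigma_{i}|}$ appearing in the statement. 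For the rigidity part, equality throughout forces $\mathring{\nabla}^{2}f \equiv 0$, the saturation $\alpha = \beta$ (so $|E|^{2}$ and $\phi$ are constant), and $\langle E,\nabla f\rangle^{2} = |E|^{2}|\nabla f|^{2}$, hence $E \parallel \nabla f$. Proposition~\ref{propKa} then gives $|E|$ and $R$ constant; Reilly's generalization of Obata's theorem (as invoked in the proof of Proposition~\ref{propprop}) identifies $(M,g)$ with a spherical cap; and the identity $\mathring{Ric} = \tfrac{2}{n}|E|^{2}g - 2E^{\flat}\otimes E^{\flat}$ derived inside Proposition~\ref{propKa} forces $E \equiv 0$, yielding the de Sitter system. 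The chief technical obstacle is orchestrating the parameter-$k$ optimization so that the asymmetric factor $\alpha/\beta^{2}$ emerges correctly, since the bounds $\alpha \le \phi \le \beta$ must be applied with opposing signs at different stages of the chain of inequalities.
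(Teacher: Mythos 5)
Your overall strategy is the same as the paper's: apply the generalized Reilly identity of Proposition~\ref{prop1} with $u=f$ so that all boundary terms collapse, use Bochner's formula to convert the $Ric(\nabla f,\nabla f)$ term, keep the manifestly nonnegative quantities $\int_M f|\mathring{\nabla}^2 f|^2\,dV_g$ and $\int_M 2f(|E|^2|\nabla f|^2-\langle E,\nabla f\rangle^2)\,dV_g$ on one side, and close the argument with the bridge identity $\sum_i\kappa_i|\Sigma_i|=-\int_M\Delta f\,dV_g$ together with H\"older estimates driven by $\alpha\le\phi\le\beta$. The rigidity discussion also matches the paper (equality forces $E\parallel\nabla f$ and $\mathring{\nabla}^2f=0$, then Proposition~\ref{propKa} and Reilly's Obata-type theorem), although the saturation $\alpha=\beta$ you mention is not needed.

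However, two concrete points in your final assembly are off. First, there is no parameter optimization: because $f=0$ on $\partial M$ one has $\int_M f^2\Delta f\,dV_g=-2\int_M f|\nabla f|^2\,dV_g$, so the two $k$-dependent terms $-(n-1)k\int_M f^2\Delta f\,dV_g$ and $-2(n-1)k\int_M f|\nabla f|^2\,dV_g$ cancel identically, and $k$ disappears from the inequality before any choice is made. Consequently your plan to ``extract the sharp coefficient by optimizing $k$'' does not account for where $\frac{\alpha(n+2)}{n}$ actually comes from; in the paper it arises by applying $\Delta f\le-\alpha f$ (hence $\int_M f(\Delta f)^2\,dV_g\le 2\alpha\int_M f|\nabla f|^2\,dV_g$) to the surviving terms $3\int_M|\nabla f|^2\Delta f\,dV_g+\frac{n-1}{n}\int_M f(\Delta f)^2\,dV_g$, yielding $-3\alpha+\frac{2(n-1)\alpha}{n}=-\frac{(n+2)\alpha}{n}$. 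Second, the discrete Cauchy--Schwarz inequality $\bigl(\sum_i\kappa_i|\Sigma_i|\bigr)^2\le\bigl(\sum_i\kappa_i^3|\Sigma_i|\bigr)\bigl(\sum_i\kappa_i^{-1}|\Sigma_i|\bigr)$ is not what produces the boundary ratio and introduces the extraneous quantity $\sum_i\kappa_i^{-1}|\Sigma_i|$; the ratio $\bigl(\sum_i\kappa_i|\Sigma_i|\bigr)^3/\sum_i\kappa_i^3|\Sigma_i|$ comes directly from chaining the two continuous H\"older estimates $\bigl(\int_M f\Delta f\,dV_g\bigr)^2\le 2\bigl(\sum_i\kappa_i|\Sigma_i|\bigr)\int_M f|\nabla f|^2\,dV_g$ and $\bigl(\sum_i\kappa_i|\Sigma_i|\bigr)^4\le\beta^2\,Vol(M)^2\bigl(\int_M f\Delta f\,dV_g\bigr)^2$ with the Reilly-derived bound $\frac{\alpha(n+2)}{n}\int_M f|\nabla f|^2\,dV_g\le\sum_i\kappa_i^3|\Sigma_i|$. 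As written, the proposal identifies the right ingredients but does not actually derive the stated constant.
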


\begin{proof}
By taking $f$ as the potential of the electrostatic system in Proposition \ref{prop1}, and noting that $f = 0$ on $\partial M,$ we deduce
		\begin{eqnarray*}
			&&\int_{M}f\left[\left(\Delta u +knu\right)^2 - \vert \nabla^2u+kug\vert^{2}\right]dV_g = (n-1)k\int_{M}\left(\Delta f +nkf\right)u^2dV_g\\
			&&+\int_{M}\left(\nabla^2f-(\Delta f)g - 2(n-1)kfg +fRic\right)(\nabla u,\,\nabla u)dV_g\\
			&&+\int_{\partial M}\frac{\partial f}{\partial\nu}\left[\vert\nabla_{\partial M}u\vert^2 -(n-1)ku^2\right]dA_g.
		\end{eqnarray*} One sees from Definition \ref{def1} that
		\begin{eqnarray*}
			\nabla^2f(\nabla u,\,\nabla u) = f\left(Ric - \dfrac{2}{n-1}\Lambda g + 2E^\flat\otimes E^\flat - \dfrac{2}{n-1}|E|^2g\right)(\nabla u,\,\nabla u),
		\end{eqnarray*}
		so that
		\begin{eqnarray*}
			&&\int_{M}f\left[\left(\Delta u +knu\right)^2 - \vert \nabla^2u+kug\vert^{2}\right]dV_g = (n-1)k\int_{M}\left(\Delta f +nkf\right)u^2dV_g\\
			&&+\int_{M}\left(2f{Ric} - 2\left(\dfrac{\Lambda}{n-1} +(n-1)k\right)fg + 2f E^\flat\otimes E^\flat - \dfrac{2}{n-1}f|E|^2g-(\Delta f)g	\right)(\nabla u,\,\nabla u)dV_g\\
			&&+\int_{\partial M}\frac{\partial f}{\partial\nu}\left[\vert\nabla_{\partial M}u\vert^2 -(n-1)ku^2\right]dA_g.
		\end{eqnarray*} Taking into account that
		\begin{eqnarray*}
			\Delta f = \dfrac{2}{n - 1}\left((n - 2)|E|^2 - 
			\Lambda\right)f,
		\end{eqnarray*}
		one obtains that
		\begin{eqnarray*}
			&&\int_{M}f\left[\left(\Delta u +knu\right)^2 - \vert \nabla^2u+kug\vert^{2}\right]dV_g = (n-1)k\int_{M}\left(\Delta f +nkf\right)u^2dV_g\\
			&&+\int_{M}2f\left[{Ric}(\nabla u,\,\nabla u) - (n-1)k|\nabla u|^{2}\right]dV_g + \int_{M}2f\left(\langle E,\,\nabla u\rangle^{2} - |E|^2|\nabla u|^{2}	\right)dV_g\\
			&&+\int_{\partial M}\frac{\partial f}{\partial\nu}\left[\vert\nabla_{\partial M}u\vert^2 -(n-1)ku^2\right]dA_g.
		\end{eqnarray*} Now, we take $u=f$ and use the fact that $f=0$ on $\partial M$ to infer
		\begin{eqnarray}\label{1}
			&&\int_{M}f\left[\left(\Delta f  + knf\right)^2 - \vert \nabla^2f+kfg\vert^{2}\right]dV_{g} = (n-1)k\int_{M}\left(\Delta f +nkf\right)f^2dV_{g}\nonumber\\
			&&+\int_{M}2f\left[{Ric}(\nabla f,\,\nabla f) - (n-1)k|\nabla f|^{2}\right]dV_{g} + \int_{M}2f\left(\langle E,\,\nabla f\rangle^{2} - |E|^2|\nabla f|^{2}	\right)dV_{g}.
		\end{eqnarray}
We then use the classical Bochner’s formula:
		\begin{eqnarray*}
			\Delta|\nabla f|^2 = 2Ric(\nabla f,\,\nabla f) + 2|\nabla^{2}f|^{2} + 2\langle\nabla\Delta f,\,\nabla f\rangle
		\end{eqnarray*}
		to compute the second term on the right-hand side of \eqref{1}, observing that
		\begin{eqnarray*}
			&&\int_{M}2f\left[{Ric}(\nabla f,\,\nabla f) - (n-1)k|\nabla f|^{2}\right]dV_{g}\nonumber\\
			&& = \int_{M}f\Bigg[\Delta|\nabla f|^2 - 2|\nabla^{2}f|^{2} - 2\langle\nabla\Delta f,\,\nabla f\rangle-  2(n-1)k|\nabla f|^{2}\Bigg]dV_{g}\nonumber\\
			&& = \int_{M}f\Bigg[\Delta|\nabla f|^2 - 2|\nabla^{2}f|^{2} -  2(n-1)k|\nabla f|^{2}\Bigg]dV_{g} + 2\int_{M}\Delta f(f\Delta f + \vert\nabla f\vert^{2}) dV_{g},
		\end{eqnarray*}
		where we have used that $\Delta f=0$ at $\partial M$. Also by Green's identity and again the fact that $f=0$ in $\partial M,$ one deduces that
		\begin{eqnarray*}
			\int_{M}\left(f\Delta|\nabla f|^{2} - |\nabla f|^{2}\Delta f\right)dV_{g} = \int_{\partial M}\left(f\frac{\partial|\nabla f|^2}{\partial\nu} - |\nabla f|^{2}\frac{\partial f}{\partial\nu}\right)dA_{g} = \sum_{i}\kappa_{i}^3\vert\Sigma_{i}\vert,
\end{eqnarray*}
where $\partial M=\displaystyle\cup_{i=1}^{l} \Sigma_{i}$, $\Sigma_{i}$ are the connected components of $\partial M$ and $\kappa_{i}=|\nabla f|\Big|_{\Sigma_{i}}$  (see Proposition \ref{properties}). Consequently,
		\begin{eqnarray*}
			\int_{M}f\Delta|\nabla f|^{2}dV_{g}  = \sum_{i}\kappa_{i}^3\vert\Sigma_{i}\vert + \int_{M}|\nabla f|^{2}\Delta f dV_{g}.
		\end{eqnarray*}
		Therefore, 
		\begin{eqnarray*}
			&&\int_{M}2f\left[{Ric}(\nabla f,\,\nabla f) - (n-1)k|\nabla f|^{2}\right]dV_{g}\nonumber\\
			&& =\sum_{i}\kappa_{i}^3\vert\Sigma_{i}\vert - \int_{M}f\Bigg[ 2|\nabla^{2}f|^{2} +  2(n-1)k|\nabla f|^{2}\Bigg]dv + 2\int_{M}f(\Delta f)^2 dV_{g}\nonumber\\&& + 3\int_{M}|\nabla f|^{2}\Delta fdV_{g}.
		\end{eqnarray*}
		
		Now, substituting the above identity into \eqref{1}, we obtain
		\begin{eqnarray*}
			&&\int_{M}f\left[\left(\Delta f  + knf\right)^2 - \vert \nabla^2f+kfg\vert^{2}\right]dV_g = (n-1)k\int_{M}\left(\Delta f +nkf\right)f^2dV_g\nonumber\\
			&&+\sum_{i}\kappa_{i}^3\vert\Sigma_{i}\vert - \int_{M}f\Bigg[ 2|\nabla^{2}f|^{2} +  2(n-1)k|\nabla f|^{2}\Bigg]dV_g + 2\int_{M}f(\Delta f)^2 dV_{g}\nonumber\\
			&& + 3\int_{M}|\nabla f|^{2}\Delta fdV_g + \int_{M}2f\left(\langle E,\,\nabla f\rangle^{2} - |E|^2|\nabla f|^{2}	\right)dV_g,
		\end{eqnarray*}
		so that
		\begin{eqnarray*}
			&&\int_{M}2f\left(|E|^2|\nabla f|^{2}-\langle E,\,\nabla f\rangle^{2} 	\right)dV_{g}+\int_{M}f\vert\nabla^2f\vert^{2}dV_{g} \nonumber\\
			&& =\sum_{i}\kappa_{i}^3\vert\Sigma_{i}\vert   + 3\int_{M}|\nabla f|^{2}\Delta fdV_{g}  + \int_{M}f\Delta f(\Delta f - k(n-1)f)dV_{g}\nonumber\\&& - 2(n-1)k\int_{M}f|\nabla f|^{2}dV_{g}.
		\end{eqnarray*} Taking into account that $\vert\nabla^2f\vert^{2} = \vert\mathring{\nabla}^2f\vert^{2} + \frac{(\Delta f)^{2}}{n},$ we infer
	
		\begin{eqnarray}
		\label{plmn4561}
			0 &\leq&\int_{M}2f\left(|E|^2|\nabla f|^{2}-\langle E,\,\nabla f\rangle^{2} 	\right)dV_g+\int_{M}f\vert\mathring{\nabla}^2f\vert^{2}dV_g \nonumber\\
			&=&\sum_{i}\kappa_{i}^3\vert\Sigma_{i}\vert   + 3\int_{M}|\nabla f|^{2}\Delta fdV_g  + (n-1)\int_{M}f\Delta f\left(\frac{\Delta f}{n} - kf\right)dV_g\nonumber\\&& - 2(n-1)k\int_{M}f|\nabla f|^{2}dV_g.
		\end{eqnarray}
		
		We now consider 
		\[
 \dfrac{2}{n - 1} \left[ \Lambda - (n - 2) |E|^2 \right] \geq  \dfrac{2}{n - 1}\min_{M}\left[ \Lambda - (n - 2)  |E|^2 \right] = \alpha,
\] and by (\ref{s1}), one sees that $\Delta f\leq -\alpha f,$ which implies that

\begin{eqnarray}
\label{kljhjçp}
			\int_{M}f(\Delta f)^{2}dV_g \leq -\alpha\int_{M}f^{2}\Delta fdV_g = 2\alpha \int_{M}f\vert\nabla f\vert^{2}dV_g,
		\end{eqnarray}
		where in the last identity we used integration by parts and the fact that $f=0$ on $\partial M.$ 
		
		Substituting (\ref{kljhjçp}) into (\ref{plmn4561}) yields

	\begin{eqnarray}
	\label{4a}
	0&\leq &\int_{M}2f\left(|E|^2|\nabla f|^{2}-\langle E,\,\nabla f\rangle^{2} 	\right)dV_g+\int_{M}f\vert\mathring{\nabla}^2f\vert^{2}dV_g \nonumber\\
&\leq&\sum_{i}\kappa_{i}^3\vert\Sigma_{i}\vert    - 3\alpha\int_{M}f|\nabla f|^{2}dV_g  + \frac{2\alpha(n-1)}{n}\int_{M}f\vert\nabla f\vert^{2}dV_g \nonumber\\ 
			&&- (n-1)k\int_Mf^{2}\Delta f dV_g - 2(n-1)k\int_{M}f|\nabla f|^{2}dV_g\nonumber\\
			&=&\sum_{i}\kappa_{i}^3\vert\Sigma_{i}\vert     - \frac{\alpha(n+2)}{n}\int_{M}f\vert\nabla f\vert^{2}dV_g .
		\end{eqnarray} 
		
		Now, we need to estimate the term $\int_{M}f|\nabla f|^{2}dV_{g}.$ To do so, we first observe that our assumption jointly with (\ref{rrr}) imply  $- \Delta f\geq0$ and hence, by by H\"older’s inequality and integration by parts, one sees that
		\begin{eqnarray}\label{2}
	\left(\int_{M}f\Delta f dV_{g}\right)^{2} &\leq & \int_{M}(-\Delta f)dV_{g} \int_{M}f^{2}(-\Delta f)dV_{g}  \nonumber\\&=& \sum_{i}\kappa_{i}\vert\Sigma_{i}\vert \int_{M}f^{2}(-\Delta f)dV_{g} \nonumber\\
			&=& 2\sum_{i}\kappa_{i}\vert\Sigma_{i}\vert \int_{M}f|\nabla f|^{2}dV_{g}.
					\end{eqnarray} To proceed, we set 
					
						\[
	\beta = \dfrac{2}{n - 1}\max_{M}\left[ \Lambda - (n - 2)  |E|^2 \right] \geq	\dfrac{2}{n - 1} \left[ \Lambda - (n - 2) |E|^2 \right],
		\] so that $\Delta f \geq -\beta f.$ Again, by H\"older’s inequality, one obtains that
		
		\begin{eqnarray}
		\label{holder1}
			\left(\int_{M}\Delta f\,dV_{g}\right)^{2} \leq Vol(M)\int_{M}(\Delta f)^2\,dV_{g} \leq -\beta Vol(M)\int_{M} f\Delta f\,dV_{g}.
		\end{eqnarray} From this, it follows that
		\begin{eqnarray}\label{3}
			\left(\sum_{i}\kappa_{i}\vert\Sigma_{i}\vert\right)^{4} =  \left(\int_{M}\Delta f dV_{g}\right)^{4} \leq \beta^2 Vol(M)^2\left(\int_{M} f\Delta f dV_{g}\right)^{2}.
		\end{eqnarray} This jointly with  \eqref{2} yields
		\begin{eqnarray*}
			\frac{\left(\sum_{i}\kappa_{i}\vert\Sigma_{i}\vert\right)^{3}}{2\beta^{2} Vol(M)^2} \leq \int_{M}f\vert\nabla f\vert^{2}dV_{g}.
		\end{eqnarray*} Now, it suffices to use  \eqref{4a} to infer
		\begin{eqnarray*}
\frac{\alpha(n+2)}{n}\int_{M}f\vert\nabla f\vert^{2}dV_g \leq	\sum_{i}\kappa_{i}^3\vert\Sigma_{i}\vert, 
		\end{eqnarray*}
consequently, 
	\begin{eqnarray*}
	\frac{\alpha(n+2)}{2n\beta^{2} Vol(M)^2}\left(\sum_{i}\kappa_{i}\vert\Sigma_{i}\vert\right)^{3} \leq	\sum_{i}\kappa_{i}^3\vert\Sigma_{i}\vert,
	\end{eqnarray*} which proves the asserted inequality.

We now consider the case of equality. In this situation, it follows from \eqref{4a} that $E$ is parallel to $\nabla f$, and $\mathring{\nabla}^{2}f=0.$ Hence, we use Proposition \ref{propKa} to deduce that $M^n$ has constant scalar curvature. Consequently, since the boundary  \(\partial M\) is totally geodesic, we may then invoke Reilly's generalization of Obata’s theorem (see \cite[Lemma 3]{reilly2}) to conclude that $(M^n,\,g,\,f)$ is isometric to the de Sitter system. This finishes the proof of the theorem. 
\end{proof}

\begin{proof}[{\bf Proof of Theorem \ref{main}}]
It suffices to simplify the (unique) constant $\kappa$ appearing in Theorem~\ref{teoAgen}.
\end{proof}

Proceeding, we present the proof of Theorem \ref{Crusc_th}.

\begin{proof}[{\bf Proof of Theorem \ref{Crusc_th}}]
		We first consider the tensor $T$ given by 
		\[
		T := {Ric} - \frac{R}{2} g + 2 E^{\flat} \otimes E^{\flat} - |E|^{2} g.
		\]  It follows from Lemma \ref{theo0} that $T$ satisfies $div\,T=0.$ Moreover, taking the trace of $T,$ we have
		
		\begin{eqnarray*}
			tr\, T &=&\frac{2-n}{2}R+(2-n)|E|^{2}\\
			&=&
			\frac{2-n}{2}\left(2\Lambda+2|E|^{2}\right)+(2-n)|E|^{2}\\
			&=&-(n-2)\Lambda+2(2-n)|E|^{2},
		\end{eqnarray*} where we have used (\ref{rrr}). Besides, one sees that 
				\begin{eqnarray*}
			\mathring{T}&=&T-\frac{tr\,T}{n}g\\
			&=&Ric-\frac{R}{n}g+2 E^{\flat} \otimes E^{\flat} - \frac{2}{n}|E|^{2} g.
		\end{eqnarray*} By choosing $X=\nabla f$ in \eqref{Poho_eq}, using integration by parts and the fact that $\nu=-\frac{\nabla f}{|\nabla f|}$ along $\partial M,$ one obtains that
		
		\begin{eqnarray}
		\label{eqnbv86000}
				\int_{M}\nabla f (tr\, T) dV_g &=&\int_{M}\left\langle\nabla f,\nabla\left(-(n-2)\Lambda+(4-2n)|E|^{2}\right)\right\rangle dV_g\nonumber\\
				&=&2(n-2)\int_{M}|E|^{2}\Delta f\, dV_g-2(n-2)\int_{\partial M}|E|^{2}\langle\nabla f,\, \nu\rangle\,dA_g\nonumber\\
				&=&2(n-2)\int_{M}f|E|^{2}\left(2\left(\frac{n-2}{n-1}\right)|E|^{2}-\frac{2}{n-1}\Lambda\right)\,dV_g\nonumber\\
				&& + 2(n-2)\int_{\partial M}|E|^{2}|\nabla f|dA_g.
			\end{eqnarray}
Next, since $\mathcal{L}_{\nabla f}g=2\nabla^{2}f$, we use the first equation in Definition \ref{def1} to deduce

\begin{equation}
\label{eqnbv860}
\int_{M}\langle\mathring{T},\mathcal{L}_{\nabla f}g\rangle dV_g=2\int_{M}f|\mathring{T}|^{2}dV_g.
\end{equation}
At the same time, by the Gauss equation and Proposition \ref{properties}, one sees that

			\begin{eqnarray}
			\label{eqnbv8602}
				\int_{\partial M}\mathring{T}(\nabla f,\,\nu)dA_g&=&\int_{\partial M}\left[Ric(\nabla f,\, \nu)-\frac{R}{n}\langle\nabla f,\, \nu\rangle+2\langle E,\nabla f\rangle\langle E,\nu\rangle-\frac{2}{n}|E|^{2}\langle\nabla f,\, \nu\rangle\right]\,dA_g\nonumber\\
				&=&\int_{\partial M}\left[-|\nabla f|\left(Ric(\nu,\,\nu)-\frac{R}{n}\right)-\frac{2}{|\nabla f|}\langle E,\,\nabla f\rangle^{2}+\frac{2}{n}|E|^{2}|\nabla f|\right]\,dA_g\nonumber\\
				&=&\int_{\partial M}\left[-|\nabla f|\left(-\frac{R^{\partial M}}{2}+\frac{n-2}{2n}R\right)+\frac{2-2n}{n}|E|^{2}|\nabla f|\right]\,dA_g.
			\end{eqnarray}

			Substituting (\ref{eqnbv86000}), (\ref{eqnbv860}) and (\ref{eqnbv8602}) into \eqref{Poho_eq} of Theorem \ref{Poho_gen}, we get

		\begin{eqnarray*}
			& &2(n-2)\int_{M}f|E|^{2}\left[2\left(\frac{n-2}{n-1}\right)|E|^{2}-\frac{2}{n-1}\Lambda\right]dV_g+2(n-2)\int_{\partial M}|E|^{2}|\nabla f|dA_g\\
			&=&n\int_{M}f|\mathring{T}|^{2}dV_g+n\int_{\partial M}|\nabla f|\left[-\frac{R^{\partial M}}{2}+\frac{n-2}{2n}R+\frac{2(n-1)}{n}|E|^{2}\right]dA_g,
		\end{eqnarray*} which implies that
		
		\begin{eqnarray}
		&&\int_{M} \left[ f |\overset{\circ}{T}|^{2} + \frac{2(n-2)}{n} |E|^{2} \left( \frac{2}{n-1}\Lambda - \frac{2(n-2)}{n-1} |E|^{2} \right) f \right]dV_g\nonumber\\ &\leq & \int_{\partial M}|\nabla f|\left[ \frac{R^{\partial M}}{2}-\frac{(n-2)}{2n}R-\frac{2}{n}|E|^2\right]\,dA_{g}.
\end{eqnarray} Hence, taking into account that \( \kappa_{i} = |\nabla f| \big|_{\Sigma_{i}} \), where \( \Sigma_{i} \) are the connected components of \( \partial M \), we use the assumption and Eq. \eqref{charge} to infer
		
		\begin{eqnarray}\label{alan}
			0 &\leq& \int_{M} \left[ f |\overset{\circ}{T}|^{2} + \frac{2(n-2)}{n} |E|^{2} \left( \frac{2}{n-1}\Lambda - \frac{2(n-2)}{n-1} |E|^{2} \right) f \right]dV_g \nonumber\\
			&=& \sum_{i=1}^{l} \kappa_{i} \int_{\Sigma_{i}} \left[ \frac{R^{\Sigma_{i}}}{2} - \frac{n-2}{2n} R - \frac{2}{n} |E|^{2} \right]dA_g\\
			&=& \sum_{i=1}^{l} \kappa_{i} \int_{\Sigma_{i}} \left[ \frac{R^{\Sigma_{i}}}{2} - \frac{n-2}{2n}(2\vert E\vert^{2} +2\Lambda) - \frac{2}{n} |E|^{2} \right]dA_g\nonumber\\
			&=& \sum_{i=1}^{l} \kappa_{i} \int_{\Sigma_{i}} \left[ \frac{R^{\Sigma_{i}}}{2}  -  |E|^{2} \right]dA_g -\frac{n-2}{n}\Lambda\sum_{i=1}^{l} \kappa_{i}|\Sigma_i|\nonumber\\
			&\leq& \sum_{i=1}^{l} \kappa_{i} \int_{\Sigma_{i}}  \frac{R^{\Sigma_{i}}}{2}dA_g - \frac{1}{2}(n-1)(n-2)\omega^{2}_{n-1}\sum_{i=1}^{l}  \frac{\kappa_{i}Q(\Sigma_i)^2}{\vert\Sigma_i\vert} -\frac{n-2}{n}\Lambda\sum_{i=1}^{l} \kappa_{i}|\Sigma_i|.\nonumber
		\end{eqnarray}
		which proves the stated inequality \eqref{Crusc_eq}.

		Finally, observe that, since both terms in the integral over \( M^n \) are nonnegative, equality holds if and only if
		$$\frac{2}{n-1}|E|^{2} \left(\Lambda - (n-2) |E|^{2} \right)=0\quad\mbox{and}\quad\overset{\circ}{T}=0.$$
From this, and using once again the assumption $\Lambda-(n-2)|E|^{2}>0$, we conclude that \( E \equiv 0 \) and, consequently, \( \overset{\circ}Ric = 0 \), that is, $M^n$ is an Einstein manifold. By applying the classical Reilly's theorem (see \cite[Lemma 3]{reilly2}), we obtain the rigidity result.

	\end{proof}

As a direct consequence of Theorem \ref{Crusc_th}, Theorem E of \cite{tiarlos} can be recovered by using the Gauss-Bonnet formula. To be precise, we have the following corollary.

\begin{corollary}(\cite[Theorem E]{tiarlos})\label{cororo}
		Let $(M^3,\, g,\, f,\, E)$ be a compact electrostatic system such that $E$ is parallel to $\nabla f$ and connected boundary $\partial M$. Suppose that $\vert E\vert^{2} < \Lambda$. Then we have:
		\begin{equation}\label{crusc_eq1}
			16\pi^2 \frac{Q(\partial M)^2}{\vert\partial M\vert} +\frac{1}{3}\Lambda|\partial M|\leq 4\pi ,
		\end{equation}
		Moreover, equality occurs in \eqref{crusc_eq1} if and only if $(M^3,\,g,\,f,\,E)$ is isometric to the de Sitter system.  
	\end{corollary}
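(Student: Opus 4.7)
The plan is to specialize Theorem~\ref{Crusc_th} to dimension $n=3$ with a single connected boundary component, and then invoke the Gauss--Bonnet theorem to convert the integral of the scalar curvature of $\partial M$ into a topological quantity. First I would substitute $n=3$ into the coefficients appearing in \eqref{Crusc_eq}. Observing that $\frac{1}{2}(n-1)(n-2)\omega_{n-1}^{2} = \frac{1}{2}\cdot 2 \cdot 1 \cdot (4\pi)^{2} = 16\pi^{2}$ and $\frac{n-2}{n}\Lambda = \frac{1}{3}\Lambda$, and noting that the hypothesis $|E|^{2}<\Lambda$ is exactly $|E|^{2}<\frac{\Lambda}{n-2}$ for $n=3$, Theorem~\ref{Crusc_th} with $l=1$ yields
\begin{equation*}
16\pi^{2}\,\frac{\kappa\, Q(\partial M)^{2}}{|\partial M|} + \frac{1}{3}\Lambda\,\kappa\,|\partial M| \;\leq\; \kappa\int_{\partial M} \frac{R^{\partial M}}{2}\,dA_{g},
\end{equation*}
where $\kappa = |\nabla f|\big|_{\partial M}>0$ by assertion $(iii)$ of Proposition~\ref{properties}.

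Next, since $\partial M$ is a closed surface, its scalar curvature satisfies $R^{\partial M} = 2K$, with $K$ the Gaussian curvature, so the Gauss--Bonnet theorem gives
\begin{equation*}
\int_{\partial M} \frac{R^{\partial M}}{2}\,dA_{g} = \int_{\partial M} K\,dA_{g} = 2\pi\,\chi(\partial M).
\end{equation*}
Substituting this into the previous inequality and dividing by the positive constant $\kappa$ leads to
\begin{equation*}
16\pi^{2}\,\frac{Q(\partial M)^{2}}{|\partial M|} + \frac{1}{3}\Lambda\,|\partial M| \;\leq\; 2\pi\,\chi(\partial M).
\end{equation*}
Because $\Lambda > (n-2)|E|^{2}\geq 0$ by assertion $(v)$ of Proposition~\ref{properties}, the left-hand side is strictly positive, and hence $\chi(\partial M) > 0$, which forces $\partial M \cong \mathbb{S}^{2}$ and $\chi(\partial M)=2$. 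Replacing $\chi(\partial M)$ by $2$ produces the claimed inequality \eqref{crusc_eq1}.

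For the rigidity statement, equality in \eqref{crusc_eq1} forces equality in \eqref{Crusc_eq}, so the conclusion follows verbatim from the equality analysis at the end of the proof of Theorem~\ref{Crusc_th}: one deduces $E\equiv 0$ and $\mathring{T}=0$, whence $(M^{3},g)$ is Einstein with totally geodesic boundary, and Reilly's generalization of Obata's theorem \cite{reilly2} identifies $(M^{3},g,f)$ with the de Sitter system. No substantive obstacle is expected in this specialization; the only point requiring minor care is verifying that the topology of $\partial M$ is forced to be spherical, which is immediate from the sign of the left-hand side under the hypothesis $|E|^{2}<\Lambda$.
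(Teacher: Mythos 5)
Your proposal is correct and follows essentially the same route as the paper, which obtains the corollary by specializing Theorem~\ref{Crusc_th} to $n=3$ with connected boundary and applying the Gauss--Bonnet formula (cf.\ Remark~\ref{rem_n=3}). The only minor point is that you invoke assertion $(v)$ of Proposition~\ref{properties} (which assumes $|E|$ constant) to get $\Lambda>0$, whereas the hypothesis $|E|^{2}<\Lambda$ together with $|E|^{2}\geq 0$ already gives this directly; alternatively, $\chi(\partial M)\leq 2$ for any closed orientable surface would suffice to conclude.
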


In the higher-dimensional setting, the additional assumption that the boundary is Einstein yields the following boundary estimate.

\begin{theorem}\label{alan1}
Let $(M^n,\,g,\,f,\, E)$ be a compact electrostatic system with connected Einstein boundary $\partial M$ and such that $E$ is parallel to $\nabla f.$ Suppose that $\vert E\vert^{2}<\frac{\Lambda}{(n-2)}.$ Then we have: 
\begin{equation}\label{bound_est}
|\partial M| \left[\displaystyle\min_{\partial M}\left(R+\frac{4}{n-2}|E|^{2}\right)\right]^{\frac{n-1}{2}}\leq [n(n-1)]^{\frac{n-1}{2}}\omega_{n-1},
\end{equation}
where $\omega_{n-1}$ is the area of the standard unitary sphere $\mathbb{S}^{n-1}$. Moreover, equality holds in (\ref{bound_est}) if and only if $(M^n,\,g,\,f,\,E)$ is isometric to the de Sitter system.  
\end{theorem}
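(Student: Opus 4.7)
The plan is to combine the intermediate integral inequality obtained in the proof of Theorem \ref{Crusc_th} with the classical Bishop--Gromov volume comparison applied to the Einstein boundary. Specifically, I would begin by re-examining inequality \eqref{alan} in the proof of Theorem \ref{Crusc_th}: since the boundary is connected, writing $\kappa=|\nabla f|\big|_{\partial M}>0$ and using the hypothesis $|E|^2<\Lambda/(n-2)$ together with Proposition \ref{properties}(i), the integrand on $M$ is nonnegative, which yields
$$\frac{n-2}{n}\int_{\partial M}\Bigl(R+\tfrac{4}{n-2}|E|^2\Bigr)\,dA_g \;\le\;\int_{\partial M}R^{\partial M}\,dA_g.$$
Note also that $R+\tfrac{4}{n-2}|E|^2 = 2\Lambda+\tfrac{2n}{n-2}|E|^2>0$ pointwise, so the left-hand side is strictly positive.

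Next, since $\partial M$ is Einstein, $R^{\partial M}$ is a positive constant (and for $n=3$ this amounts to constant Gaussian curvature). Hence dividing by $|\partial M|$ gives
$$R^{\partial M}\;\ge\;\frac{n-2}{n}\min_{\partial M}\Bigl(R+\tfrac{4}{n-2}|E|^2\Bigr)>0.$$
Then I would apply the Bishop--Gromov volume comparison to $(\partial M,g|_{\partial M})$: because it is closed Einstein with $\mathrm{Ric}_{\partial M}=\tfrac{R^{\partial M}}{n-1}g|_{\partial M}$, comparison with the round $(n-1)$-sphere of matching Ricci curvature yields
$$|\partial M|\;\le\;\omega_{n-1}\left[\frac{(n-1)(n-2)}{R^{\partial M}}\right]^{(n-1)/2}.$$
Inserting the lower bound on $R^{\partial M}$ and rearranging produces exactly the sharp estimate \eqref{bound_est}.

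For the equality case, equality in Bishop--Gromov forces $(\partial M,g|_{\partial M})$ to be isometric to a round sphere, while equality in the integral inequality extracted from the proof of Theorem \ref{Crusc_th} forces $E\equiv0$ together with $\mathring{T}=0$, and therefore $\mathring{Ric}=0$. Combined with the fact that $\partial M$ is totally geodesic (Proposition \ref{properties}(ii)), Reilly's generalization of Obata's theorem (see \cite[Lemma 3]{reilly2}) identifies $(M^n,g,f)$ with the de Sitter system. The converse is an immediate verification on the hemisphere, where $R=n(n-1)$, $E\equiv0$, and $|\partial M|=\omega_{n-1}$. The main delicate point is the constant bookkeeping when combining the two sharp inequalities, in particular verifying that the factor $\tfrac{n-2}{n}$ from the electrostatic identity cancels the $(n-2)$ in the Bishop comparison to produce precisely $[n(n-1)]^{(n-1)/2}$; the remaining steps are essentially forced by the structure of the two ingredients.
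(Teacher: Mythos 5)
Your proposal is correct and follows essentially the same route as the paper: the intermediate inequality from the proof of Theorem \ref{Crusc_th} gives the lower bound $R^{\partial M}\geq\frac{n-2}{n}\min_{\partial M}\bigl(R+\frac{4}{n-2}|E|^2\bigr)$ on the (constant) boundary scalar curvature, Bishop's volume comparison for the Einstein boundary converts this into the area bound, and the equality case reduces to that of Theorem \ref{Crusc_th}. Your constant bookkeeping checks out and in fact reads more cleanly than the paper's (which states the Bishop estimate with a typographical exponent $\delta^{-1/a}$ where $\delta^{-(n-1)/2}$ is meant).
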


\begin{proof}
Since $\partial M$ is Einstein, we may consider $Ric^{\partial M}=\delta(n-2)g^{\partial M},$ where $\delta$ is constant. By Bishop's theorem, one deduces that
\begin{equation}\label{est1}
|\partial M|\leq\delta^{-\frac{1}{a}}\omega_{n-1}.
\end{equation}

On the other hand, since $R^{\partial M}=(n-1)(n-2)\delta$, Theorem \ref{Crusc_th} implies
		\begin{equation*}
			\delta(n-1)(n-2)\geq\displaystyle\min_{\partial M}\left(\frac{n-2}{n}R+\frac{4}{n}|E|^{2}\right),
		\end{equation*}
		so that
		\begin{equation}\label{est2}
			\delta n(n-1)\geq \min_{\partial M}\left(R+\frac{4}{n-2}|E|^{2}\right).
		\end{equation}
		Next, by assumption, we have 
		\[
		\min_{\partial M} \left( R + \frac{4}{n-2} |E|^{2} \right) > 0.
		\]  
		Thereby, substituting \eqref{est2} into \eqref{est1}, we arrive at \eqref{bound_est}. Moreover, the equality in \eqref{est2} guarantees the equality in \eqref{Crusc_eq} of Theorem \ref{Crusc_th} and therefore, the rigidity result follows.
	\end{proof}

When \( n = 5 \), so that the boundary \( \partial M \) is four-dimensional, the Gauss--Bonnet--Chern theorem reveals a deep connection between the topology and geometry of \( \partial M \). In particular, it expresses the Euler characteristic \( \chi(\partial M) \) through the following identity:

\begin{equation}\label{CGB}
8\pi^{2} \chi(\partial M) = \frac{1}{4} \int_{\partial M} |W|^{2} \, dA_g + \frac{1}{24} \int_{\partial M} \left(R^{\partial M}\right)^{2} \, dA_g - \frac{1}{2} \int_{\partial M} |\mathring{Ric}_{_{\partial M}} |^{2} \, dA_g,
\end{equation}
where \( W \), \(R^{\partial M} \) and \( \overset{\circ}{Ric}_{\partial M} \) denote the Weyl tensor, the scalar curvature and the traceless part of its Ricci tensor of \( \partial M \), respectively. This result originates from the seminal contributions of S.-S. Chern \cite{chern1944} in the 1940s, wherein he generalized the classical Gauss–Bonnet theorem to higher even dimensions via the framework of characteristic classes and differential forms.

\begin{corollary}\label{coro2alan}
Let $(M^5,\,g,\,f,\,E)$ be a compact electrostatic system with connected Einstein boundary $\partial M$ such that $E$ parallel to $\nabla f.$ Then we have:
		\begin{equation}
		\label{olkj7801}
			\left[\displaystyle\min_{\partial M}\left(\frac{3}{5}R+\frac{4}{5}|E|^{2}\right)\right]^{2}|\partial M|\leq 	192\pi^{2}\chi(\partial M).
		\end{equation} Moreover, equality holds in (\ref{olkj7801}) if and only if $(M^5,\,g,\,f)$ is isometric to the de Sitter system. 
\end{corollary}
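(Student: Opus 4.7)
The strategy is to combine an intermediate inequality from the proof of Theorem~\ref{Crusc_th} (specialized to $n=5$ and a connected boundary) with the four-dimensional Gauss--Bonnet--Chern formula \eqref{CGB} applied to the Einstein boundary $\partial M$.

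First, I revisit the derivation in the proof of Theorem~\ref{Crusc_th}. Before the final simplification that introduces the charge $Q(\partial M)$, that proof establishes the integrated inequality
\begin{equation*}
\sum_{i=1}^{l} \kappa_{i} \int_{\Sigma_{i}} \left[ \tfrac{1}{2} R^{\Sigma_{i}} - \tfrac{n-2}{2n}\,R - \tfrac{2}{n}\,|E|^{2} \right] dA_g \ \geq\ 0 .
\end{equation*}
Specializing to $n=5$ and a single boundary component, dividing by the positive constant $\kappa$, and rearranging, I obtain
\begin{equation*}
\int_{\partial M} R^{\partial M}\,dA_g \ \geq\ \int_{\partial M} \left(\tfrac{3}{5}\,R + \tfrac{4}{5}\,|E|^{2}\right) dA_g .
\end{equation*}
Since $\partial M$ is Einstein, $R^{\partial M}$ is a constant, and bounding the right-hand integrand from below by its pointwise minimum yields the scalar estimate
\begin{equation*}
R^{\partial M} \ \geq\ \min_{\partial M}\left(\tfrac{3}{5}\,R + \tfrac{4}{5}\,|E|^{2}\right) .
\end{equation*}
Under the standing hypothesis $|E|^{2} < \Lambda/(n-2)$ inherited from Theorem~\ref{Crusc_th}, one has $R = 2\Lambda + 2|E|^{2} > 0$, so this lower bound is non-negative and squaring preserves the inequality.

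Next, I invoke the Gauss--Bonnet--Chern identity \eqref{CGB} for the $4$-manifold $\partial M$. Because $\partial M$ is Einstein, $\mathring{Ric}_{\partial M}$ vanishes identically, so \eqref{CGB} simplifies to
\begin{equation*}
8\pi^{2}\,\chi(\partial M) \ =\ \tfrac{1}{4}\int_{\partial M} |W|^{2}\,dA_g \ +\ \tfrac{1}{24}\,(R^{\partial M})^{2}\,|\partial M| .
\end{equation*}
Discarding the non-negative Weyl term yields $(R^{\partial M})^{2}\,|\partial M| \leq 192\pi^{2}\,\chi(\partial M)$, which combined with the squared form of the previous estimate produces exactly \eqref{olkj7801}.

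For the rigidity statement, equality in \eqref{olkj7801} forces equality in every intermediate step, in particular in the integrated Theorem~\ref{Crusc_th} inequality, whose rigidity clause gives $E\equiv 0$ and identifies $(M^{5},g,f)$ with the de Sitter system. Conversely, for the de Sitter system $\partial M = \mathbb{S}^{4}$ has $W\equiv 0$ and $\chi(\partial M)=2$, so equality is realized. The main subtlety is not methodological but bookkeeping: one must extract the sharper integrated version used inside the proof of Theorem~\ref{Crusc_th} rather than the final charge-form \eqref{Crusc_eq}, after which the argument reduces to a clean application of Gauss--Bonnet--Chern under the Einstein condition.
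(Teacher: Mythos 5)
Your proposal is correct and follows essentially the same route as the paper: both extract the intermediate inequality \eqref{alan} from the proof of Theorem~\ref{Crusc_th} with $n=5$, pass to $\left[\min_{\partial M}\bigl(\tfrac{3}{5}R+\tfrac{4}{5}|E|^{2}\bigr)\right]^{2}|\partial M|\leq \int_{\partial M}(R^{\partial M})^{2}\,dA_g$, and conclude via the Gauss--Bonnet--Chern formula \eqref{CGB} with the Einstein condition killing the traceless Ricci term. The only cosmetic difference is that you invoke constancy of $R^{\partial M}$ (Schur) where the paper uses H\"older's inequality, and you are right to flag that the hypothesis $|E|^{2}<\Lambda/(n-2)$ from Theorem~\ref{Crusc_th} is implicitly needed.
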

	
\begin{proof}
Considering $n=5$ in Theorem \ref{Crusc_th} (see (\ref{alan})), one obtains that
		$$\int_{\partial M}\left[R^{\partial M}-\frac{3}{5}R-\frac{4}{5}|E|^{2}\right]dA_g\geq 0,$$
		which then implies
		$$\displaystyle\min_{\partial M}\left(\frac{3}{5}R+\frac{4}{5}|E|^{2}\right)|\partial M|\leq\int_{\partial M}R^{\partial M}dA_g.$$
		Now, we use the Holder's inequality to infer
		$$\left[\displaystyle\min_{\partial M}\left(\frac{3}{5}R+\frac{4}{5}|E|^{2}\right)\right]^{2}|\partial M|\leq \int_{\partial M}\left(R^{\partial M}\right)^{2}dA_g.$$
To conclude, it suffices to apply the Gauss–Bonnet–Chern formula \eqref{CGB}. Furthermore, the equality case follows from Theorem \ref{Crusc_th}.
\end{proof}

In the sequel, assuming a suitable upper bound for the scalar curvature, we obtain a boundary estimate for compact electrostatic systems with connected boundary. This estimate will be used in the proof of Theorem~\ref{livreE}. 

To this end, we first establish a lower bound for the scalar curvature in electrostatic systems, which will also play a key role in the proof of Theorem~\ref{geometricineq}.
\begin{lemma}\label{lower_scal}
Let $(M^{n},\,g,\,f,\,E)$ be a compact electrostatic system with positive cosmological constant $\Lambda$. Then the scalar curvature satisfies the following lower bound:
\[
R \geq \frac{(n-1)\left(3n - 4 - \sqrt{(n+2)(n-2)}\right)}{(n-1)^2 + (n-2)^2} \Lambda.
\]
\end{lemma}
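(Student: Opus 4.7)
The plan is to derive the bound as an immediate consequence of the scalar curvature identity of Proposition~\ref{properties}(i). First, I invoke that identity to write $R = 2\Lambda + 2|E|^{2}$ pointwise on $M^{n}$; since $|E|^{2} \geq 0$ and $\Lambda > 0$, this yields the sharper pointwise estimate $R \geq 2\Lambda$ throughout $M^{n}$. The lemma then follows at once from the numerical inequality $2\Lambda \geq R_{-}$, where $R_{-}$ denotes the constant on the right-hand side of the stated bound.

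Second, I verify this numerical inequality by an elementary computation. After clearing the positive denominator, it becomes
\[
2[(n-1)^{2}+(n-2)^{2}] - (n-1)(3n-4) + (n-1)\sqrt{(n+2)(n-2)} \geq 0.
\]
A direct expansion shows that the polynomial part equals $(n-2)(n-3)$, which is non-negative for $n \geq 3$, while the radical term is manifestly non-negative, so the inequality holds.

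As a conceptual remark motivating the specific form of the bound, $R_{-}$ is precisely the smaller root of the quadratic
\[
\psi(R) = [(n-1)^{2}+(n-2)^{2}]R^{2} - 2(n-1)(3n-4)R\Lambda + 4(n-1)^{2}\Lambda^{2},
\]
which arises, after substituting $|E|^{2} = (R - 2\Lambda)/2$ and $\Delta f = \frac{f}{n-1}[(n-2)R - 2(n-1)\Lambda]$, from the interior integrand appearing in the identity \eqref{divformula} of Proposition~\ref{propprop}. A direct evaluation gives $\psi(2\Lambda) = 4\Lambda^{2}(2-n) \leq 0$ for $n \geq 3$, so that $2\Lambda$ lies in the closed interval $[R_{-}, R_{+}]$ between the two roots of $\psi$; this explains why the pointwise estimate $R \geq 2\Lambda$ is already strong enough to deduce the lemma. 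Accordingly, I do not foresee any substantive obstacle: the whole argument reduces to the algebraic verification above once the trace identity of Proposition~\ref{properties}(i) has been invoked.
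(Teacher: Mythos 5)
Your proof is correct and follows essentially the same route as the paper: both arguments reduce the lemma to the pointwise bound $R = 2\Lambda + 2|E|^{2} \geq 2\Lambda$ from Proposition~\ref{properties}(i) together with the purely numerical fact that the constant in the stated bound is at most $2$. The only difference is in verifying that numerical fact: the paper argues that $\ell(n)$ is decreasing with $\ell(2)=2$, whereas you clear denominators and observe that the polynomial part equals $(n-2)(n-3)\geq 0$, which is a slightly more self-contained verification of the same step.
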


\begin{proof}
Consider the function
\[
\ell(n) = \frac{(n-1)\left(3n - 4 - \sqrt{(n+2)(n-2)}\right)}{(n-1)^2 + (n-2)^2}.
\]
A straightforward computation shows that $\ell(n)$ is a decreasing function of $n$. Moreover,
\[
\ell(2) = 2,\,\,\ell(3) = 2 \left(1 - \frac{\sqrt{5}}{5} \right) \approx 1.10, \quad \text{and} \quad \lim_{n \to +\infty} \ell(n) = 1.
\]
In particular, for all $n \geq 2$, we have
\[
\ell(n)\Lambda \leq 2\Lambda.
\]
Next, it follows from (\ref{rrr}) that $R \geq 2\Lambda$ and hence, the claimed inequality follows.
\end{proof}

We are now in a position to discuss our next result. Essentially, it provides a condition under which the left-hand side of inequality~\eqref{geoinq} in Proposition~\ref{propprop} is nonnegative. See also Remark~\ref{about_limit} for a discussion on the meaning of the hypothesis.

\begin{theorem}\label{geometricineq}
	Let $(M^{n},\,g,\,f,\,E)$ be a compact electrostatic system with connected boundary $\partial M$ and positive cosmological constant satisfying
	\begin{eqnarray}\label{upper_bound}
		R \leq  \frac{(n-1)\left({3n-4} + \sqrt{(n+2)(n-2)}\right)}{(n-1)^2+(n-2)^2}\Lambda.
	\end{eqnarray}
	Then we have:
	\begin{eqnarray}
		\label{km67401m}
		\kappa C\vert\partial M\vert \leq \frac{2\Lambda}{(n-1)}\int_{\partial M} R^{\partial M}dA_g,
	\end{eqnarray}
	where $\kappa = \vert\nabla f\vert\Big|_{\partial M}$ and $C\geq 0$ is a constant. Moreover, equality holds in (\ref{km67401m}) if and only if $(M^{n},\,g,\,f,\,E)$ is isometric to the de Sitter system with $C=\dfrac{4(n-2)\Lambda^2}{\kappa n(n - 1)}$ and $\kappa = \dfrac{\Lambda}{n}$.
\end{theorem}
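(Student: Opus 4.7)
The plan is to reduce the divergence identity \eqref{divformula} from Proposition~\ref{propprop} to a pointwise quadratic expression in the scalar curvature $R$ and then exploit the sign of that quadratic under the stated hypothesis.

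First, I substitute the scalar-curvature identity $|E|^{2} = (R - 2\Lambda)/2$ from \eqref{rrr} together with the electrostatic Laplacian \eqref{ab02}, namely $\Delta f = \frac{(n-2)R - 2(n-1)\Lambda}{n-1}\, f$, into the bulk integrand $4(n-1) f |E|^{4} + (n-2) R\, \Delta f$ on the right-hand side of \eqref{divformula}. A direct expansion collapses this to $\frac{f}{n-1}\, P(R)$, where
$$P(R) = \bigl[(n-1)^{2} + (n-2)^{2}\bigr] R^{2} - 2(n-1)(3n-4)\Lambda\, R + 4(n-1)^{2}\Lambda^{2}.$$
The discriminant of $P$ simplifies to $4(n-1)^{2}(n-2)(n+2)\Lambda^{2}$, so the two real roots are exactly
$$R_{\pm} = \frac{(n-1)\bigl(3n-4 \pm \sqrt{(n-2)(n+2)}\bigr)}{(n-1)^{2} + (n-2)^{2}}\,\Lambda.$$
The hypothesis \eqref{upper_bound} is precisely $R \le R_{+}$, while \lemref{lower_scal} yields $R \ge R_{-}$. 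Since the leading coefficient of $P$ is positive, $P(R) \le 0$ everywhere on $M^{n}$.

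Next, since $\kappa = |\nabla f|$ is a positive constant along the connected boundary by Proposition~\ref{properties}(iii), identity \eqref{divformula} can be rewritten as
$$\kappa \int_{\partial M} R^{\partial M}\, dA_{g} = \int_{M}\Bigl[\tfrac{1}{f}\,|\mathring{\nabla}^{2} f|^{2} + f\,|\mathring{Ric}|^{2}\Bigr] dV_{g} + \frac{1}{n(n-1)} \int_{M} \bigl(-P(R)\bigr)\, f\, dV_{g},$$
with every term on the right-hand side nonnegative. Setting
$$C := \frac{2\Lambda}{n(n-1)^{2}\kappa^{2}\, |\partial M|}\int_{M}\bigl(-P(R)\bigr)\, f\, dV_{g} \ge 0,$$
and discarding the $\mathring{\nabla}^{2} f$ and $\mathring{Ric}$ contributions (both nonnegative) before multiplying through by $\tfrac{2\Lambda}{(n-1)\kappa}$ produces exactly
$$\kappa\, C\, |\partial M| \le \frac{2\Lambda}{n-1}\int_{\partial M} R^{\partial M}\, dA_{g},$$
which is \eqref{km67401m}.

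For the rigidity, equality forces both $\int_{M} \tfrac{1}{f}|\mathring{\nabla}^{2} f|^{2}\, dV_{g} = 0$ and $\int_{M} f\,|\mathring{Ric}|^{2}\, dV_{g} = 0$, so $\mathring{\nabla}^{2} f \equiv 0$ and $\mathring{Ric} \equiv 0$. Substituting these into \eqref{ab01} shows that $2 f\, E^{\flat}\otimes E^{\flat}$ is a scalar multiple of $g$ on the interior, which in dimension $n \ge 2$ forces $E \equiv 0$; hence $R = 2\Lambda$ is constant and $M^{n}$ is Einstein. The Reilly--Obata rigidity \cite[Lemma~3]{reilly2} applied to the totally geodesic boundary (Proposition~\ref{properties}(ii)) then identifies $(M^{n}, g, f)$ with the de Sitter system, and a direct evaluation using $-P(2\Lambda) = 4(n-2)\Lambda^{2}$ shows that the implicit constant $C$ above matches the prescribed value $\frac{4(n-2)\Lambda^{2}}{\kappa n(n-1)}$. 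The main obstacle is the algebraic step of recognizing that the bulk integrand collapses to a single quadratic $P(R)$ whose two roots coincide exactly with the endpoints coming from \lemref{lower_scal} and the hypothesis \eqref{upper_bound}; once this identification is carried out, the inequality and the rigidity follow cleanly from Proposition~\ref{propprop} and the standard Reilly--Obata argument.
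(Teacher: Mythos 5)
Your argument is correct and follows the same backbone as the paper's proof: both start from the identity \eqref{divformula} of Proposition~\ref{propprop}, reduce the bulk term to a quadratic in $R$ whose roots are exactly $R_{\pm}=\frac{(n-1)(3n-4\pm\sqrt{(n+2)(n-2)})}{(n-1)^2+(n-2)^2}\Lambda$, use Lemma~\ref{lower_scal} together with \eqref{upper_bound} to fix its sign, and close the rigidity case with Reilly's theorem. The differences are in packaging, and they are worth noting. First, you work directly with the plain quadratic $P(R)$ obtained by eliminating $|E|^2$ via \eqref{rrr}; the paper instead keeps the factored form $\overline{F}(|E|^2)$ from \eqref{geoinq}, which contains square roots, and must square the factor $P(R)$ and analyze an auxiliary quadratic $Q(R)$ to determine its sign. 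Your $P(R)$ equals $-4(n-1)^2Q(R)$, so the two computations are equivalent, but yours is cleaner. Second, you choose a different constant: the paper takes $C=\min_M\overline{F}(|E|^2)$, a pointwise minimum, and then needs the extra step of integrating $\Delta f$ to convert $\int_M f\,dV_g$ into $\kappa|\partial M|$ (with the additional estimate $\Lambda-(n-2)\min_M|E|^2\le\Lambda$); your $C$ is a normalized integral of $(-P(R))f$, which makes \eqref{km67401m} follow immediately from the identity and which, as you indicate, still evaluates to $\frac{4(n-2)\Lambda^2}{\kappa n(n-1)}$ on de Sitter (using $\int_M f\,dV_g=\frac{(n-1)\kappa|\partial M|}{2\Lambda}$ there). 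Since the theorem only asserts the existence of some constant $C\ge 0$ with the stated value in the equality case, both choices are legitimate; your equality analysis (forcing $\mathring{\nabla}^2f=0$ and $\mathring{Ric}=0$, hence $E\equiv 0$ by the rank argument applied to \eqref{ab01}, hence Einstein) is also sound.
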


\begin{proof} We start by defining
\begin{eqnarray}\label{bru}
\overline{F}(\vert E \vert^{2}) &=& \frac{4(n-1)}{n \kappa} \left\{ \vert E \vert^{2} + \frac{(n-2)^2}{4(n - 1)^2} \left[ \sqrt{R \left( R + \frac{8(n-1)^2}{(n-2)^3} \Lambda \right)} + R \right] \right\} \nonumber \\
&& \times \left\{ \frac{(n-2)^2}{4(n - 1)^2} \left[ \sqrt{R \left( R + \frac{8(n-1)^2}{(n-2)^3} \Lambda \right)} - R \right] - \vert E \vert^{2} \right\},
\end{eqnarray}
where \(\kappa = \left| \nabla f \right| \big|_{\partial M}\). Note that \(\overline{F}(\vert E \vert^{2})\) is a quadratic polynomial in the variable \(\vert E \vert^{2}\). From now on, the proof is divided into some steps.

We first need to obtain a geometric condition to guarantee that $\overline{F}(\vert E \vert^{2})\geq 0$. Indeed, since \(R \geq 0\), it suffices to control the sign of the second factor in the product in~\eqref{bru}. Using~\eqref{rrr}, we rewrite this factor in terms of the scalar curvature \(R\) as
\begin{eqnarray*}
P(R) &=& \frac{(n-2)^2}{4(n - 1)^2} \left[ \sqrt{R \left( R + \frac{8(n-1)^2}{(n-2)^3} \Lambda \right)} - R \right] - \vert E \vert^{2} \\
&=& \frac{(n-2)^2}{4(n - 1)^2} \sqrt{R \left( R + \frac{8(n-1)^2}{(n-2)^3} \Lambda \right)} - \frac{(n-2)^2}{4(n - 1)^2} R - \frac{R}{2} + \Lambda.
\end{eqnarray*} By squaring this expression, we get

\begin{eqnarray}
\left[\frac{(n-2)^2}{4(n - 1)^2}\right]^{2} R \left( R + \frac{8(n-1)^2}{(n-2)^3} \Lambda \right) 
&=& P(R)^{2} + 2P(R) \left( \frac{(n-2)^2}{4(n - 1)^2} R + \frac{R}{2} - \Lambda \right) \nonumber \\
&& + \left( \frac{(n-2)^2}{4(n - 1)^2} R + \frac{R}{2} - \Lambda \right)^{2}. \label{sqr_P}
\end{eqnarray} Observe from (\ref{rrr}) that \(R \geq 2|E|^{2}\) and therefore,
\[
\left[\frac{(n-2)^2}{4(n - 1)^2} R + \frac{R}{2} - \Lambda \right] \geq \left[\frac{(n-2)^2}{2(n - 1)^2}|E|^{2} + |E|^{2} \right] \geq 0.
\]
Furthermore, to guarantee that \(P(R) \geq 0\) we only need to show that
\begin{eqnarray}\label{cuidado com o sinal}
Q(R) := -\frac{(n-1)^2 + (n-2)^2}{4(n-1)^2} R^{2} + \frac{3n - 4}{2(n - 1)} \Lambda R - \Lambda^{2} \geq 0.
\end{eqnarray} The roots of \(Q(R)\) are
\[
R_{\pm} = \frac{(n-1) \left( 3n - 4 \pm \sqrt{(n+2)(n-2)} \right)}{(n-1)^2 + (n-2)^2} \Lambda.
\] Consequently, by Lemma~\ref{lower_scal} and the assumption~\eqref{upper_bound}, one sees that
\[
R_{-} \leq R \leq R_{+},
\]
and therefore \(Q(R)\), \(P(R)\) and $\overline{F}(\vert E\vert^{2})$ are nonnegative.

Proceeding, we return to inequality \eqref{geoinq} in Proposition \ref{propprop} which asserts that 
\begin{eqnarray*}
&&	\frac{1}{\kappa}\int_{M}\left[\frac{1}{f}\vert\mathring{\nabla}^2f\vert^{2} + f\vert\mathring{R}ic\vert^{2}\right]dV_{g}\nonumber\\
&& + \frac{4(n-1)}{n\kappa}\int_{M}f\left\{\vert E\vert^{2} +  \dfrac{(n-2)^2}{4(n - 1)^2}\left[\sqrt{R\left(R + \frac{8(n-1)^2}{(n-2)^3}\Lambda\right)} +R\right]\right\}\nonumber\\
&&\times\left\{\dfrac{(n-2)^2}{4(n - 1)^2}\left[\sqrt{R\left(R + \frac{8(n-1)^2}{(n-2)^3}\Lambda\right)} - R\right] - \vert E\vert^{2}\right\}dV_{g}\nonumber\\
&=& \int_{\partial M} R^{\partial M} dA_{g},
\end{eqnarray*}
so that
	\begin{eqnarray*}
		\frac{1}{\kappa}\int_{M}\left[\frac{1}{f}\vert\mathring{\nabla}^2f\vert^{2} + f\vert\mathring{R}ic\vert^{2}\right]dV_{g} + \int_{M}f\overline{F}(\vert E\vert^{2})dV_{g}=
		\int_{\partial M} R^{\partial M} dA_{g}.
	\end{eqnarray*}
By letting \(C := \min_M \overline{F}(\vert E\vert^{2}) \geq 0,\) one obtains that
\begin{eqnarray}\label{ineq_FE}
C\int_{M}fdV_{g}\leq
\int_{\partial M} R^{\partial M} dA_{g}.
\end{eqnarray} Moreover, equality in \eqref{ineq_FE} occurs if and only if $\mathring{R}ic=0$, and $\mathring{\nabla}^{2}f=0$.

We now need to remove the dependence on $f.$ To that end, on integrating the equation in (\ref{s1}):
\[
\Delta f = \frac{2}{n-1} \left[(n-2)|E|^{2} - \Lambda\right] f
\]
and applying Proposition~\ref{properties}, we obtain
\begin{eqnarray*}
	- \kappa \vert \partial M \vert \geq \frac{2}{n-1} \left((n-2) \min_M |E|^{2} - \Lambda\right) \int_M f \, dV_g.
\end{eqnarray*}
Since $C \geq 0$, one deduces that
\begin{eqnarray*}
	\kappa C \vert \partial M \vert &\leq& \frac{2C}{n-1} \left( \Lambda - (n-2) \min_M |E|^{2} \right) \int_M f \, dV_g \\
	&\leq& \frac{2C\Lambda}{n-1} \int_M f \, dV_g \\
	&\leq& \frac{2\Lambda}{n-1} \int_{\partial M} R^{\partial M} \, dA_g,
\end{eqnarray*}
so that
\begin{eqnarray} \label{mainteo4}
	\kappa C \vert \partial M \vert \leq \frac{2\Lambda}{n-1} \int_{\partial M} R^{\partial M} \, dA_g,
\end{eqnarray} which proves the asserted inequality.

Finally, we address the equality case. If equality is achieved, then \eqref{ineq_FE} yields $\mathring{\operatorname{Ric}} = 0$ and $\mathring{\nabla}^{2}f = 0$. By Reilly's theorem \cite{reilly2}, it follows that the manifold is the de Sitter space and then $\kappa = \dfrac{\Lambda}{n}$.	Furthermore, we can explicitly compute the constant $C$ by evaluating
\begin{eqnarray*}
	\overline{F}(0) &=& \frac{4(n-1)}{n\kappa}\left\{  \dfrac{(n-2)^2}{4(n - 1)^2}\left[\sqrt{2\Lambda\left(2\Lambda + \frac{8(n-1)^2}{(n-2)^3}\Lambda\right)} + 2\Lambda\right]\right\} \\
	&& \times\left\{\dfrac{(n-2)^2}{4(n - 1)^2}\left[\sqrt{2\Lambda\left(2\Lambda + \frac{8(n-1)^2}{(n-2)^3}\Lambda\right)} - 2\Lambda\right] \right\} \\
	&=& \dfrac{(n-2)^4\Lambda^2}{\kappa n(n - 1)^3} \left[\sqrt{1 + \frac{4(n-1)^2}{(n-2)^3}} + 1\right]\left[\sqrt{1 + \frac{4(n-1)^2}{(n-2)^3}} - 1\right] \\
	&=& \dfrac{4(n-2)\Lambda^2}{\kappa n(n - 1)}.
\end{eqnarray*} Thus, we must have $C = \dfrac{4(n-2)\Lambda^2}{\kappa n(n - 1)}$. So, the proof is finished. 

\end{proof}

\begin{remark}
\label{about_limit}
	Regarding the scalar curvature restriction in the assumption of Theorem~\ref{geometricineq}, notice that
	\[
	\lim_{n \to +\infty} \frac{(n-1) \left( 3n - 4 + \sqrt{(n+2)(n-2)} \right)}{(n-1)^2 + (n-2)^2} = 2.
	\]
	
	Moreover, for \( n = 3 \), we have the explicit bound
	\[
	1,10\,\Lambda\approx	2 \left( 1 - \frac{\sqrt{5}}{5} \right) \Lambda \leq R \leq 2 \left( 1 + \frac{\sqrt{5}}{5} \right) \Lambda \approx 2,89\,\Lambda.
	\]
	Note that this hypothesis is consistent with the restriction observed in Proposition~\ref{properties}.
\end{remark}

We proceed to prove Theorem~\ref{livreE}, which follows as a direct consequence of Theorem~\ref{geometricineq}.

\begin{proof}[{\bf Proof of Theorem \ref{livreE}}]
We begin by using (\ref{rrr}) to verify that the assumption in Theorem~\ref{geometricineq} is satisfied. Therefore, it suffices to apply the Gauss-Bonnet formula to the right-hand side of equation~(\ref{mainteo4}) in order to obtain the stated inequality.  
\end{proof}

It is also relevant to consider the case where the surface gravity does not appear explicitly. In this context, we have the following theorem.

\begin{theorem}
\label{teorigidezint}
		Let $(M^n,\, g,\, f,\, E)$ be a compact electrostatic system and $\partial M=\cup_{i=1}^{l} \Sigma_{i}$, where $\Sigma_{i}$ are the connected components of $\partial M$. Suppose that $$[(n-1)^2+(n-2)^2]\vert E\vert^{2}\leq (n-2)\Lambda.$$ Then we have:
		\begin{equation*}
			\frac{n-2}{n}\Lambda\sum_{i=1}^{l} \kappa_{i}|\Sigma_i|\leq\sum_{i=1}^{l} \kappa_{i} \int_{\Sigma_{i}}  \frac{R^{\Sigma_{i}}}{2}dA_g ,
		\end{equation*}
where $\kappa_{i}=|\nabla f|\Big|_{\Sigma_{i}}$ and $\omega_{n-1}$ is the area of the standard $(n-1)$-sphere. Moreover, equality holds if and only if $(M^n,\,g,\,f,\,E)$ is isometric to the de Sitter system.
\end{theorem}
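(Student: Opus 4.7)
The plan is to reduce the asserted boundary inequality to a bulk identity that comes from Proposition~\ref{propprop}, and to identify the coefficient $[(n-1)^2+(n-2)^2]$ as the natural constant that makes the bulk integrand nonnegative. First, since $\kappa_i=|\nabla f|\big|_{\Sigma_i}$ is constant on each connected component (Proposition~\ref{properties}(iii)), I rewrite the asserted inequality as a single boundary integral
\begin{equation*}
\frac{2(n-2)\Lambda}{n}\int_{\partial M}|\nabla f|\,dA_g \;\leq\; \int_{\partial M}|\nabla f|\,R^{\partial M}\,dA_g.
\end{equation*}
The right-hand side is already controlled by Proposition~\ref{propprop}, and the left-hand side can be converted into a bulk integral by integrating the second equation of \eqref{s1} and applying Stokes' theorem, which yields
\begin{equation*}
\int_{\partial M}|\nabla f|\,dA_g \;=\; \frac{2}{n-1}\int_M\!\bigl[\Lambda-(n-2)|E|^2\bigr]f\,dV_g.
\end{equation*}

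Next, I would combine these two relations with the identity in Proposition~\ref{propprop} and substitute $R=2\Lambda+2|E|^2$ from Proposition~\ref{properties}(i). A bookkeeping computation—grouping the $f|E|^4$ terms coming from the $4(n-1)f|E|^4$ contribution together with the ones produced by $(n-2)R\Delta f$—yields the key identity
\begin{equation*}
\begin{aligned}
\int_{\partial M}|\nabla f|R^{\partial M}dA_g-\frac{2(n-2)\Lambda}{n}\int_{\partial M}|\nabla f|\,dA_g
&= \int_M\!\Bigl[\tfrac{1}{f}|\mathring{\nabla}^2f|^2 + f|\mathring{Ric}|^2\Bigr]dV_g \\
&\quad + \frac{4}{n(n-1)}\int_M\! f|E|^2\Bigl\{(n-2)\Lambda-[(n-1)^2+(n-2)^2]|E|^2\Bigr\}dV_g.
\end{aligned}
\end{equation*}
Under the hypothesis $[(n-1)^2+(n-2)^2]|E|^2\leq (n-2)\Lambda$, both integrands on the right-hand side are nonnegative, which delivers the desired inequality.

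For the rigidity statement, equality forces $\mathring{\nabla}^2 f=0$, $\mathring{Ric}=0$, and the pointwise vanishing of $f|E|^2\{(n-2)\Lambda-[(n-1)^2+(n-2)^2]|E|^2\}$. Inserting $\nabla^2 f=\frac{\Delta f}{n}g$ together with $\mathring{Ric}=0$ into the first equation of \eqref{s1}, the traceless part collapses to $E^\flat\otimes E^\flat=\frac{|E|^2}{n}g$; since the left-hand side has rank at most one, we conclude $E\equiv 0$ pointwise. Then $\mathring{\nabla}^2 f=0$ on the vacuum static space with totally geodesic boundary $\partial M=f^{-1}(0)$, so Reilly's generalization of Obata's theorem \cite[Lemma 3]{reilly2} identifies $(M^n,g,f)$ with the de Sitter system. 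The main subtlety is purely bookkeeping: recognizing in advance that the subtraction of $\frac{2(n-2)\Lambda}{n}\int_{\partial M}|\nabla f|\,dA_g$ (rather than any other multiple of $\Lambda$) is precisely what eliminates the bulk $\Lambda^2\!\int_M f$ term and makes the coefficient $[(n-1)^2+(n-2)^2]$ emerge—this is exactly the constant that appears in the sharp hypothesis.
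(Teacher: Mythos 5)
Your proof is correct and follows essentially the same route as the paper: both start from Proposition~\ref{propprop}, substitute $R=2\Lambda+2|E|^2$ and the equation $\Delta f=\frac{2}{n-1}[(n-2)|E|^2-\Lambda]f$, and group the $f|E|^4$ terms to arrive at exactly the identity you display, with the coefficient $[(n-1)^2+(n-2)^2]$ emerging in the same way. Your treatment of the equality case is in fact slightly more careful than the paper's: since the hypothesis is a non-strict inequality, the vanishing of the second bulk integrand does not by itself force $E\equiv 0$, and your rank-one argument deducing $E\equiv 0$ from $\mathring{\nabla}^2f=0$ and $\mathring{Ric}=0$ cleanly closes that gap before invoking Reilly's theorem.
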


\begin{proof}
By Proposition \ref{propprop}, we have 

		\begin{eqnarray*}
			\int_{M}\left[\frac{1}{f}\vert\mathring{\nabla}^2f\vert^{2} + f\vert\mathring{R}ic\vert^{2}\right] dV_{g}	&=&	\int_{\partial M}\vert\nabla f\vert R^{\partial M} dA_{g} \nonumber\\
			&&+ \frac{1}{n}\int_{M}\left(4(n-1)f\vert E\vert^{4}+(n-2)R\Delta f\right) dV_{g},
		\end{eqnarray*} by using \eqref{rrr}, one obtains that
		\begin{eqnarray*}
			n\int_{M}\left[\frac{1}{f}\vert\mathring{\nabla}^2f\vert^{2} + f\vert\mathring{R}ic\vert^{2}\right]dV_{g}	&=&	n\kappa\int_{\partial M} R^{\partial M} dA_{g} \nonumber\\
			&&+\int_{M}\left(4(n-1)f\vert E\vert^{4}+2(n-2)(\vert E\vert^2 + \Lambda)\Delta f\right) dV_{g}\nonumber\\
			&=&	n\kappa\int_{\partial M} R^{\partial M} dA_{g} \nonumber\\
			&&+\int_{M}2\vert E\vert^{2}\left( 2(n-1)f\vert E\vert^{2}+(n-2)\Delta f\right) dV_{g} \nonumber\\
			&&- 2(n-2)\Lambda\kappa\vert\partial M\vert
		\end{eqnarray*}
		where $\kappa = \vert\nabla f\vert\Big|_{\partial M}$. Then, by Definition \ref{def1}, we get
		\begin{eqnarray*}
			&&	n\int_{M}\left[\frac{1}{f}\vert\mathring{\nabla}^2f\vert^{2} + f\vert\mathring{R}ic\vert^{2}\right]dV_{g}
			+  2(n-2)\Lambda\kappa\vert\partial M\vert=	n\kappa\int_{\partial M} R^{\partial M} dA_{g} \nonumber\\
			&&+\frac{4}{(n-1)}\int_{M}f\vert E\vert^{2}\left[((n-1)^{2}+(n-2)^{2})\vert E\vert^{2}  - 
			(n-2)\Lambda\right]dV_{g}.
		\end{eqnarray*} Therefore, by assuming that $[(n-1)^2+(n-2)^2]\vert E\vert^{2}\leq (n-2)\Lambda,$ one concludes that
		\begin{eqnarray*}
			\Lambda\vert\partial M\vert\leq \frac{n}{2(n-2)}\int_{\partial M} R^{\partial M} dA_{g}.
		\end{eqnarray*} Furthermore, equality holds if and only if $\vert\mathring{\nabla}^2f\vert^{2}=0$, $\vert\mathring{R}ic\vert^{2}=0$ and $E=0$. By applying the classical Reilly's theorem, we obtain the rigidity result.
	\end{proof}

\begin{proof}[{\bf Proof of Theorem \ref{teorigidezint1}}]
The proof follows immediately from Theorem~\ref{teorigidezint}, in combination with the Gauss–Bonnet formula.
\end{proof}

In the remainder of this section, we present an additional boundary area estimate in higher dimensions under the  assumption of constant scalar curvature. In this case, the technique differs slightly from the previous ones and is based on a volume comparison argument involving the Bishop–Gromov inequality. We note that it would be of particular interest to replace the constant scalar curvature assumption by a condition involving a bound on $|E|$. 

\begin{theorem}
\label{thm6}
Let \( (M^n,\,g,\,f,\,E) \) be a compact electrostatic system with connected boundary \( \partial M \) and constant scalar curvature. Then we have:
\begin{equation}
\label{lkm1111}
|\partial M| \leq c\, \Omega_{n-1},
\end{equation}
for a specific constant \( c > 0 \), where \( \Omega_{n-1} \) denotes the volume of the unit \((n-1)\)-dimensional sphere. Moreover, equality holds in (\ref{lkm1111}) if and only if \( (M^n,\,g,\,f,\,E) \) is isometric to the de Sitter system.
\end{theorem}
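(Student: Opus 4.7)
The plan is to lift $(M, g, f, E)$ to a warped-product closed manifold carrying a positive Ricci lower bound, and then apply a Bishop-Gromov-type comparison. By items $(i)$ and $(v)$ of Proposition~\ref{properties}, the constant-scalar-curvature hypothesis forces $|E|^{2}$ to be constant, so
\[
\lambda := \tfrac{2}{n-1}\bigl[\Lambda - (n-2)|E|^{2}\bigr]
\]
is a positive constant and the second line of \eqref{s1} reduces to $\Delta f = -\lambda f$. Rearranging the Hessian equation in \eqref{s1} and observing that the largest eigenvalue of $E^{\flat}\otimes E^{\flat}$ is $|E|^{2}$, one obtains the algebraic inequality
\[
Ric - \frac{\nabla^{2}f}{f} \;=\; \frac{2(\Lambda + |E|^{2})}{n-1}\,g - 2\,E^{\flat}\otimes E^{\flat} \;\geq\; \lambda\, g,
\]
which is the crucial input for what follows.

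Set $\kappa := |\nabla f|\big|_{\partial M}$ and consider the Riemannian warped product
\[
(\widehat{M},\widehat{g}) \;=\; \bigl(M\times \mathbb{S}^{1}_{T},\; g + f^{2}\,d\theta^{2}\bigr), \qquad T := 2\pi/\kappa.
\]
Standard warped-product Ricci identities give $\widehat{Ric}(X,Y) = Ric(X,Y) - \nabla^{2}f(X,Y)/f$ for horizontal $X, Y$ and $\widehat{Ric}(\partial_{\theta},\partial_{\theta}) = -f\,\Delta f = \lambda f^{2}$; combined with the inequality above, this yields $\widehat{Ric} \geq \lambda\,\widehat{g}$ on the $(n+1)$-manifold $\widehat{M}$. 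A Fermi-coordinate computation near $\partial M$ with $f \sim \kappa\, r$, where $r := dist(\cdot, \partial M)$, shows that the specific choice $T = 2\pi/\kappa$ is exactly what makes $\widehat{g}$ extend smoothly across the collapse axis, with local model $dr^{2} + r^{2}\,d\theta'^{2} + g_{\partial M}$. Hence $(\widehat{M},\widehat{g})$ is a closed smooth $(n+1)$-manifold and $\partial M$ embeds into it as the totally geodesic codimension-$2$ fixed locus of the $\mathbb{S}^{1}$-action.

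Writing $K := \lambda/n$, so that $\widehat{Ric} \geq nK\,\widehat{g}$, the comparison model is the round sphere $\mathbb{S}^{n+1}_{K}$ of radius $1/\sqrt{K}$, in which every totally geodesic $(n-1)$-submanifold has area $\Omega_{n-1}\,K^{-(n-1)/2}$. Applying the Heintze-Karcher volume comparison to the $r$-tubes around $\partial M \subset \widehat{M}$ and using the common leading-order asymptotics $Vol(T_{r}(\partial M)) = \pi r^{2} |\partial M| + O(r^{3})$ as $r \to 0^{+}$ (read off directly from the local model), dividing by $\pi r^{2}$ and letting $r \to 0^{+}$ yields
\[
|\partial M| \;\leq\; c\,\Omega_{n-1}, \qquad c = \Bigl(\tfrac{n(n-1)}{2\bigl[\Lambda - (n-2)|E|^{2}\bigr]}\Bigr)^{\!(n-1)/2}.
\]
If equality holds, the rigidity case of Heintze-Karcher forces $(\widehat{M},\widehat{g})$ to be isometric to $\mathbb{S}^{n+1}_{K}$; unfolding the warped-product structure recovers $(M, g, f)$ as the hemisphere of $\mathbb{S}^{n}_{K}$ with $f$ the associated height function and $E \equiv 0$, i.e., the de Sitter system.

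The principal obstacle is this last comparison step: the classical Bishop-Gromov inequality bounds volumes of geodesic balls, not of codimension-$2$ submanifolds, so one must invoke the sharp Heintze-Karcher inequality for tubes around totally geodesic submanifolds in a closed manifold with positive Ricci lower bound, and carefully verify that the $\mathbb{S}^{1}$-collapse of $\widehat{M}$ at $\partial M$ matches the collapse model at a codimension-$2$ equator of $\mathbb{S}^{n+1}_{K}$. The warped-product device is precisely what reduces the entire argument to this single classical technical input, with the rigidity statement of the comparison inherited intact.
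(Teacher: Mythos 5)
Your setup coincides with the paper's: both construct the circle warped product over $M$ with warping function $f$, both derive the Ricci lower bound $\widehat{Ric}\geq \lambda\,\widehat{g}$ from the sub-static inequality (item (vi) of Proposition~\ref{properties}, which in your version is the eigenvalue estimate for $E^{\flat}\otimes E^{\flat}$ — that part is correct), and both note that constant scalar curvature forces $|E|^{2}$, hence $\lambda$, to be constant. The divergence is in how the area bound is extracted, and that is where your argument breaks.

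The final step — ``applying the Heintze–Karcher volume comparison to the $r$-tubes around $\partial M\subset\widehat M$, dividing by $\pi r^{2}$ and letting $r\to 0^{+}$'' — does not produce the inequality $|\partial M|\leq \Omega_{n-1}K^{-(n-1)/2}$. The Heintze–Karcher inequality bounds $Vol(T_{r}(P))$ from above by $|P|$ times the \emph{model tube profile}, i.e., it compares the tube around $P$ with the tube around a model submanifold \emph{of the same area as $P$}, not with the tube around an equatorial $(n-1)$-sphere of $\mathbb{S}^{n+1}_{K}$. Since both sides of that inequality have the identical leading asymptotics $\pi r^{2}|\partial M|+O(r^{3})$, dividing by $\pi r^{2}$ and sending $r\to 0^{+}$ yields only the tautology $|\partial M|\leq|\partial M|$; the curvature hypothesis enters the tube expansion only at higher order in $r$. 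There is no comparison theorem that dominates $Vol(T_{r}(\partial M))$ by the volume of the tube around a \emph{maximal} totally geodesic $(n-1)$-sphere in the model, which is what your limit would require. (This is also why your constant lost the factor $\kappa^{-1}$ that the correct bound carries.) The missing ingredient is elementary and is exactly what the paper uses: by Fubini and integration by parts with $\Delta f=-\zeta f$ one gets the \emph{identity} $Vol(\widehat M)=2\pi\int_{M}f\,dV_{g}=\tfrac{2\pi}{\zeta}\kappa\,|\partial M|$, and then the plain Bishop volume bound $Vol(\widehat M)\leq \Omega_{n+1}\left(n/\zeta\right)^{(n+1)/2}$ for the closed $(n+1)$-manifold gives the stated estimate; the rigidity follows from Bishop's equality case together with equality in the sub-static inequality forcing $E\equiv 0$. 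Your smooth-closure discussion of the period $T=2\pi/\kappa$ is a reasonable refinement of a point the paper glosses over, but it does not repair the comparison step.
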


\begin{remark}
The constant \( c \) in Theorem \ref{thm6} is given by
\begin{equation*}
c := \frac{1}{\kappa} \cdot \frac{n^{\frac{n-1}{2}}}{\zeta^{\frac{n+1}{2}}},
\end{equation*}
where \( \zeta = \frac{2}{n-1} \left[ \Lambda - (n-2)|E|^2 \right] \).
\end{remark}

\begin{proof}
Initially, it follows from the system \eqref{s1} that
\[
- \frac{\Delta f}{f} = \frac{2}{n-1} \left[ \Lambda - (n-2)|E|^2 \right] =: \zeta,
\]
where \( \zeta > 0 \) is a constant by the constant scalar curvature assumption and (\ref{rrr}). Next, consider the warped product \( N^{n+1} = \mathbb{S}^1 \times_f M \), and denote geometric tensors on \( M \) and \( N \) with the subscripts \( M \) and \( N \), respectively. Using standard curvature formulas for warped products (see \cite[Chapter~7]{oneill}), one sees that
\begin{align*}
Ric_N(X,Y) &= Ric_M(X,Y) - \frac{1}{f} \nabla^2 f(X,Y), \\
Ric_N(X,V) &= 0, \\
Ric_N(U,V) &= -\frac{\Delta f}{f} \, g_{\mathbb{S}^1}(U, V),
\end{align*}
for vector fields \( X, Y \in \mathfrak{X}(M) \) and \( U, V \in \mathfrak{X}(\mathbb{S}^1) \).
Hence, one deduce from \eqref{sub_static} in Proposition~\ref{properties} that
\begin{equation}\label{lim_Ricci}
Ric_M - \frac{1}{f} \nabla^2 f \geq -\frac{\Delta f}{f} \, g_M,
\end{equation}
and consequently,
\[
Ric_N \geq \zeta\, g_N.
\]
Applying the Bishop-Gromov comparison theorem, we obtain

\begin{equation}
\label{olk780AA}
Vol(N) \leq \Omega_{n+1} \left( \frac{n}{\zeta} \right)^{\frac{n+1}{2}}.
\end{equation}

On the other hand, by Fubini's theorem and integration by parts, we have
\begin{eqnarray}
\label{olk780}
Vol(N) &=& \int_{\mathbb{S}^1} \left[ \int_M f \, dV_g \right] d\theta = 2\pi \int_M f \, dV_g \nonumber\\
&=& -\frac{2\pi}{\zeta} \int_M \Delta f \, dV_g = \frac{2\pi}{\zeta} \int_{\partial M} |\nabla f| \, dA_g \nonumber\\
&=& \frac{2\pi}{\zeta} \kappa \, |\partial M|,
\end{eqnarray}
where \( \kappa = |\nabla f|_{\partial M} \) is constant. Plugging (\ref{olk780}) into (\ref{olk780AA}), we get
\[
|\partial M| \leq \frac{\Omega_{n+1}}{2\pi \kappa} \left( \frac{n}{\zeta} \right)^{\frac{n+1}{2}}.
\]
Using the recurrence formula \( \Omega_{n-1} = \frac{n}{2\pi} \Omega_{n+1} \), one deduces that
\[
|\partial M| \leq \frac{1}{\kappa} \cdot \frac{n^{\frac{n-1}{2}}}{\zeta^{\frac{n+1}{2}}} \Omega_{n-1}.
\]

In the case of equality, Bishop’s theorem implies that 
\( (N^{n+1}, h) \) must be isometric to the round sphere. The equality in \eqref{lim_Ricci} then yields \( Ric_M - \frac{1}{f} \nabla^2 f = -\frac{\Delta f}{f} g_M \), which holds if and only if \( E = 0 \), meaning the system is vacuum static. Furthermore, by the classical characterization of Einstein manifolds associated with vacuum static spaces (see, for instance, \cite[Section~5]{ambrozio}), we conclude that \( (M, g) \) is isometric to the de Sitter system, which finishes the proof. 
\end{proof}

\section{Brown-York and charged Hawking masses}

In this section, we present the proofs of Theorems~\ref{teoBY} and~\ref{hawking2}. It is worth noting that the technique adopted here is inspired by recent works in various contexts of Einstein-type spaces, including the vacuum static case \cite{FY,HMR,Jie,yuan}, perfect fluid space-times \cite{costa} and quasi-Einstein manifolds \cite{ernani}. We begin by establishing the bound for the Brown–York mass in electrostatic systems. To do so, we consider the following notation

$$\beta^{-1}=\displaystyle\max_{M}\left(f^{2}+\frac{n(n-1)}{R}|\nabla f|^{2}\right)^{\frac{1}{2}}.$$ In addition, we introduce the function $v$ along with a conformal change of the metric $\overline{g}$ by
	$$v=(1+\beta f)^{-\frac{n-2}{2}}\,\,\quad\mbox{and}\quad\,\,\overline{g}=v^{\frac{4}{n-2}}g.$$ With this notation, we have the following lemma.

	\begin{lemma}\label{posit_conf}
Let $(M^n,\,g,\,f,\,E)$ be a compact electrostatic system with $\Lambda+|E|^{2}>0$. Then $R_{\bar{g}}$ is non-negative. Furthermore, $\overline{R}=0$ if and only if $E\equiv 0$, $\Delta f=-\frac{R}{n-1}f$ and $f^{2}+\frac{n(n-1)}{R}|\nabla f|^{2}$ is constant in $M^n$. Here, $\overline{R}$ stands for the scalar curvature of $\overline{g}$.
	\end{lemma}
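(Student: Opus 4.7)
The plan is to compute $R_{\bar{g}}$ directly from the conformal transformation rule, reduce the non-negativity to an algebraic inequality involving the definition of $\beta$, and then analyze the equality case term by term.

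First, I would recall the standard formula under the conformal change $\bar{g} = v^{4/(n-2)}g$:
\[
R_{\bar{g}} = v^{-\frac{n+2}{n-2}}\left(-\frac{4(n-1)}{n-2}\Delta v + R v\right),
\]
so it suffices to prove that $-\tfrac{4(n-1)}{n-2}\Delta v + R v \geq 0$. Writing $u = 1+\beta f$ so that $v = u^{-(n-2)/2}$, a direct computation gives
\[
\Delta v = -\tfrac{n-2}{2}\beta\, u^{-n/2}\Delta f + \tfrac{n(n-2)}{4}\beta^{2}u^{-(n+2)/2}|\nabla f|^{2},
\]
and therefore, after multiplying through by $u^{(n+2)/2}$, the non-negativity condition becomes
\[
R u^{2} + 2(n-1)\beta u\, \Delta f - n(n-1)\beta^{2}|\nabla f|^{2} \geq 0.
\]

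Next I would substitute the value of $\Delta f$ provided by the electrostatic system. Combining the second equation in \eqref{s1} with the scalar curvature identity \eqref{rrr} gives the clean expression
\[
\Delta f = -\frac{R}{n-1}f + 2|E|^{2}f.
\]
Plugging this in and expanding $u^{2}=(1+\beta f)^{2}$, the cross terms $\pm 2R\beta f$ cancel, and after a short manipulation the left-hand side simplifies to
\[
R\left[1 - \beta^{2}\!\left(f^{2} + \tfrac{n(n-1)}{R}|\nabla f|^{2}\right)\right] + 4(n-1)\beta\,|E|^{2} f(1+\beta f).
\]
By the very definition of $\beta$, namely $\beta^{-2} = \max_{M}\bigl(f^{2} + \tfrac{n(n-1)}{R}|\nabla f|^{2}\bigr)$, the bracket is non-negative; meanwhile $f\geq 0$ with $f>0$ in the interior, $1+\beta f>0$, and our standing assumption $\Lambda+|E|^{2}>0$ ensures $R = 2(\Lambda+|E|^{2})>0$. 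Hence both summands are non-negative and the first assertion follows.

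For the rigidity statement, if $R_{\bar{g}} \equiv 0$ then both non-negative summands must vanish identically. From the second term, since $f>0$ and $1+\beta f >0$ in the interior, we conclude $|E|\equiv 0$ in $\operatorname{int}(M)$, and by continuity $E\equiv 0$ on $M$. From the first term, since $R>0$, we obtain
\[
f^{2} + \tfrac{n(n-1)}{R}|\nabla f|^{2} \equiv \beta^{-2},
\]
which is constant. Finally, with $E\equiv 0$ the expression for $\Delta f$ derived above reduces to $\Delta f = -\tfrac{R}{n-1}f$, as claimed. The converse is immediate by retracing the simplification.

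The main obstacle is keeping the algebra tidy in the step where one expands $Ru^{2} + 2(n-1)\beta u \Delta f - n(n-1)\beta^{2}|\nabla f|^{2}$ and recognizes the precise combination $f^{2} + \tfrac{n(n-1)}{R}|\nabla f|^{2}$; this is exactly the reason the constant $\beta$ is defined through the maximum of that particular quantity, and verifying that the cancellation produces this expression (rather than a slightly different one) is the heart of the proof.
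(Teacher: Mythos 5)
Your proposal is correct and follows essentially the same route as the paper: the conformal scalar curvature formula applied to $v=(1+\beta f)^{-(n-2)/2}$, the electrostatic expression for $\Delta f$ combined with $R=2\Lambda+2|E|^2$, and the definition of $\beta$ to make the bracket non-negative. The only cosmetic difference is that you substitute $\Delta f=-\tfrac{R}{n-1}f+2|E|^2f$ exactly and exhibit $\overline{R}$ as a positive multiple of a sum of two manifestly non-negative terms, whereas the paper first records the inequality $\Delta f\geq-\tfrac{R}{n-1}f$ (with equality iff $E=0$) and propagates it; both yield the same equality analysis.
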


	\begin{proof}
		First, observe that
		\begin{eqnarray}
			\Delta f&=&\left(2\left(\frac{n-2}{n-1}\right)|E|^{2}-\frac{2}{(n-1)}\Lambda\right)f\nonumber\\
			&=&\left(2\left(\frac{n-2}{n-1}\right)|E|^{2}+\frac{2}{n-1}|E|^{2}-\frac{R}{n-1}\right)f\nonumber\\
			&=&\left(2|E|^{2}-\frac{R}{n-1}\right)f\nonumber\\
			&\geq & -\frac{R}{n-1}f, \label{ineq_el1}
		\end{eqnarray}
		where equality holds if and only if $E=0.$ From this, it follows that
		\begin{eqnarray}\nonumber
			\Delta v &=&\,\Delta\left((1+\beta f)^{-\frac{n-2}{2}}\right)\\\nonumber
			&=&\,-\frac{(n-2)\beta}{2}div\left((1+\beta f)^{-\frac{n}{2}}\nabla f\right)\\
			&=&\, -\frac{(n-2)\beta}{2}(1+\beta f)^{-\frac{n}{2}}\Delta f + \frac{n(n-2)}{4}\beta^2 (1+\beta f)^{-\frac{n+2}{2}}|\nabla f|^2\nonumber\\
			&\leq &\frac{n(n-2)}{4}\beta(1+\beta f)^{-\frac{n+2}{2}}\left(\frac{2R}{n(n-1)}(1+\beta f)f+\beta|\nabla f|^2\right). \label{ineq_el2}
		\end{eqnarray}

		On the other hand, we use the following formula for the scalar curvature 
		\begin{eqnarray*}
			{\overline{R}}=v^{-\frac{n+2}{n-2}}\left(R v-4\frac{(n-1)}{(n-2)}\Delta v\right),
		\end{eqnarray*} 
		in order to infer
		\begin{eqnarray*}
			\Delta v&=&\;\frac{(n-2)}{4(n-1)}R v-\frac{(n-2)}{4(n-1)}v^{\frac{n+2}{n-2}}{\overline{R}}\\
			&=&\;\frac{(n-2)}{4(n-1)}R (1+\beta f)^{-\frac{n-2}{2}}-\frac{(n-2)}{4(n-1)}(1+\beta f)^{-\frac{n+2}{2}}{\overline{R}}\\
			&=&\;\frac{(n-2)}{4(n-1)}(1+\beta f)^{-\frac{n+2}{2}}\left[(1+\beta f)^2R-{\overline{R}}\right].
		\end{eqnarray*}
		Substituting this information into \eqref{ineq_el2} yields
		\begin{eqnarray*}
			R(1+\beta f)^2- {\overline{R}}\leq n(n-1)\beta \left(\frac{2 R}{n(n-1)}(1+\beta f)f+\beta|\nabla f|^2\right).
		\end{eqnarray*} 
		Consequently,
		\begin{eqnarray}\nonumber
			{\overline{R}}&\geq &\, R(1+\beta f)^2-n(n-1)\beta \left(\frac{2 R}{n(n-1)}(1+\beta f)f+\beta|\nabla f|^2\right)\\\nonumber
			&=&\, R\left[(1+\beta f)^2-2\beta(1+\beta f)f-\frac{n(n-1)\beta^2}{R}|\nabla f|^2\right]\\\label{eqq3}
			&=&\, R\left[1-\beta^2\left(f^2+\frac{n(n-1)}{R}|\nabla f|^2\right)\right].
		\end{eqnarray} Since $R>0$ (whenever $\Lambda+|E|^{2}>0$), and using the chosen value of $\beta$, we conclude that ${\overline{R}}\geq 0.$
		
		Finally, it follows from \eqref{ineq_el1} that ${\overline{R}}=0$ if and only if $E\equiv 0$ and $\Delta f=-\frac{R}{n-1}f$.  Moreover, from the characterization of $\beta$, we deduce that the quantity $f^2+\frac{n(n-1)}{R}|\nabla f|^2$ is constant on $M^n,$ which finishes the proof. 
\end{proof}

	We are now ready to present the proof of Theorem~\ref{teoBY}.

	\begin{proof}[{\bf Proof of Theorem \ref{teoBY}}]
Following the notation introduced above, it is well known that the mean curvature $\overline{H}^{i}$ of $\Sigma_i$ with respect to the conformal metric $\overline{g}=v^{\frac{4}{n-2}}g$ is strictly positive. Indeed, since $f|_{\partial M}=0,$ it follows that $v\mid_{\partial M}=1.$ Hence, $\overline{g}=g$ over the boundary $\partial M$ and $(\Sigma_i,\overline{g})$ is isometric to $(\Sigma_{i},\,g)$, which, by assumption, can be isometrically embedded in $\mathbb{R}^n$ as a convex hypersurface with mean curvature  $H_{0}^{i},$ induced by the Euclidean metric. Moreover, taking into account that the mean curvature of $\Sigma_i$ with respect to the conformal metric $\overline{g}$ is given by
		\begin{eqnarray}\label{eqq5}
			\overline{H}^{i}=v^{-\frac{2}{n-2}}\left(H^{i}+2\frac{n-1}{n-2}\partial_\nu(\log(v))\right),
		\end{eqnarray} so that
		
		\begin{eqnarray}
			\label{eqq6}
			\overline{H}^{i}&=& \frac{2(n-1)}{(n-2)}\big\langle \nabla(1+\beta f)^{-\frac{n-2}{2}},\,\nu\big\rangle\nonumber\\
			&=& (n-1)\beta|\nabla f|\Big|_{_{\Sigma_i}},
		\end{eqnarray} where we have used that $H^{i}=0$ and $\nu=-\frac{\nabla f}{|\nabla f|}.$ This proves that $\overline{H}^{i}>0,$ as claimed (cf. Lemma 4 in \cite{tiarlos}).

		Proceeding, one sees from \eqref{eqq6} that
		
		\begin{eqnarray}
			\label{eqq7}
			\mathfrak{M}_{BY}(\Sigma_i,\overline{g})&=& \int_{\Sigma_i}(H_{0}^{i}-\overline{H}^{i}) dA_g\nonumber\\
			&=& \mathfrak{M}_{BY}(\Sigma_i,g)-(n-1)\beta|\nabla f|\Big|_{{\Sigma_i}}|\Sigma_i|.
		\end{eqnarray} Since Lemma \ref{posit_conf} implies that ${\overline{R}}\geq 0$,  and $\overline{H}^{i}>0,$ we apply the Positive Mass Theorem for the Brown-York mass \cite{Shi-Tam} to conclude that $\mathfrak{M}_{BY}(\Sigma_i,\overline{g})\geq 0.$ Consequently, 
		\begin{eqnarray}
			\label{eqq8}
			|\Sigma_i|&\leq &\frac{1}{(n-1)\beta|\nabla f|\big|_{{\Sigma_i}}}\mathfrak{M}_{BY}(\Sigma_i,g)\nonumber\\
			&=&\frac{1}{(n-1)\beta|\nabla f|\big|_{{\Sigma_i}}}\int_{\Sigma_i}H_{0}^{i} dA_g,
		\end{eqnarray} which proves the stated inequality \eqref{eqq4}.

		We now consider the case of equality. If equality holds in \eqref{eqq8} for some component $\Sigma_{i_{0}}$, then 		
		\begin{eqnarray*}
			\mathfrak{M}_{BY}(\Sigma_{i_{0}},\overline{g})=0.
		\end{eqnarray*} By invoking the equality case of the Positive Mass Theorem for the Brown–York mass, it follows that the conformal metric $\overline{g}$ is flat, and consequently, $(M^n,\,\overline{g})$ is isometric to a bounded domain in $\mathbb{R}^n$. Moreover, taking into account that ${\overline{R}}=0,$ Lemma~\ref{posit_conf} implies that $E\equiv 0$, $\Delta f=-\dfrac{R}{n-1}f$ and the quantity $\dfrac{n(n-1)}{R}|\nabla f|^2+f^2$ is constant on $M^n.$ Thus, since $(M^{n},\,g)$ is vacuum static, the scalar curvature $R$ is constant. Consequently, 
		\begin{eqnarray*}
			0=\nabla\left[\frac{n(n-1)}{R}|\nabla f|^2+f^2\right]=\frac{2n(n-1)}{R}\nabla^2 f(\nabla f)+2f\nabla f
		\end{eqnarray*}
		so that
		\begin{eqnarray*}
			\nabla|\nabla f|^2-2\frac{\Delta f}{n}\nabla f=0.
		\end{eqnarray*} Now, it suffices to use the Robinson-Shen-type identity in Lemma \ref{lemmaAk} to conclude that $|\mathring{Ric}|^2=0$. Finally, we use Reilly's theorem linked with the fact that $\partial M$ is totally geodesic in order to conclude that $(M^n,\,g)$ is isometric to $\mathbb{S}^{n}_{+}$.

		On the other hand, if $(M^n,\,g)$ is isometric to $\mathbb{S}^{n}_{+},$ one deduces that $\partial M=\mathbb{S}^{n-1}.$ Thereby, the Brown-York mass of $\mathbb{S}^{n-1}$ is given by
		\begin{eqnarray*}
			\mathfrak{M}_{BY}(\mathbb{S}^{n-1})=\int_{\mathbb{S}^{n-1}}(n-1) dA_{g_{\mathbb{S}^{n-1}}}=(n-1)\omega_{n-1},
		\end{eqnarray*}  where $\Omega_{n-1}$ is the volume of the standard $(n-1)$-sphere. At the same time, since $\mathbb{S}_+^n$ has constant scalar curvature $R=n(n-1),$ it follows that the height function $h$ satisfies $\Delta h=-\frac{R}{n-1}h=-nh$. Therefore, $(\mathbb{S}_+^n,\,g,\,h)$ is a static vacuum space with non-zero cosmological constant. Consequently, $h^2+\dfrac{n(n-1)}{R}|\nabla h|^2$ is constant. Indeed, a direct computation yields
		\begin{eqnarray*}
			\nabla\left(h^2+\frac{n(n-1)}{R}|\nabla h|^2\right)&=&2h\nabla h+2\nabla^2 h(\nabla h)\\
			&=&-\frac{2\Delta h}{n}\nabla h+2\nabla^2 h(\nabla h)\\
			&=&2\mathring{\nabla}^2 h(\nabla h)=2h\mathring{Ric}(\nabla h)=0.
		\end{eqnarray*} From this, it follows that 
		\begin{eqnarray*}
			\beta^{-2}=\left.\left(h^2+\frac{n(n-1)}{R}|\nabla h|^2\right)\right|_{\partial M}=|\nabla h|^{2}_{|{\partial M}},
		\end{eqnarray*} so that 
		
		$$\beta=\frac{1}{|\nabla h|_{|{\partial M}}}.$$ Hence,
		\begin{eqnarray*}
			\frac{\mathfrak{M}_{BY}(\partial M,g)}{\alpha (n-1)|\nabla h|_{|_{\partial M}}}=\omega_{n-1}=|\partial M|,
		\end{eqnarray*} which is the equality in \eqref{eqq8}. So, the proof is completed.
	\end{proof}

\begin{remark}
We note that Theorem~\ref{teoBY} can also be formulated within a broader class of sub-static systems (see Remark~\ref{sub_sta}). The core of its proof relies on inequality~\eqref{ineq_el1}, which remains valid in this more general setting. This extension has already been observed, for example, in the case where the energy–momentum tensor corresponds to that of a perfect fluid, under a sub-static condition related to the dominant energy condition, specifically, the inequality $\rho + P \geq 0$, where $\rho$ denotes the energy density and $P$ is the pressure; see \cite[Theorem~2]{costa}. This broader perspective allows the result to be extended to other classes of energy–momentum tensors, such as those associated with wave maps, including Klein–Gordon-type matter fields (cf. \cite[Section 2]{cfmr}).
\end{remark}

	We now recall that the charged Hawking mass of a $2$-surface $\Sigma$ is given by
	\begin{equation*}
		\mathfrak{M}_{CH}(\Sigma) = \sqrt{\frac{|\Sigma|}{16 \pi}}\left(\frac{1}{2}\chi(\Sigma) - \frac{1}{16\pi}\int_{\Sigma}(H^2 + \frac{4}{3}\Lambda)dA_{g} + \frac{4\pi}{|\Sigma|}Q(\Sigma)^2 \right),
	\end{equation*} where $|\Sigma|$ stands for the area of $\Sigma,$ $\chi(\Sigma)=2\big(1-g(\Sigma)\big)$ is the Euler characteristic and $g(\Sigma)$ stands for the genus of $\Sigma,$ respectively. We point out that, even in the case of minimal surfaces, the positivity of the charged Hawking mass is not trivial to establish. However, a straightforward computation shows that the charged Hawking mass of a sphere in the Reissner-Nordstr\"om-de Sitter space satisfies $\mathfrak{M}_{CH}(\mathbb{S}(r))=\mathfrak{M}_{ADM}$ (see \cite[p. 6]{baltazar2023}).

\begin{proof}[{\bf Proof of Theorem \ref{hawking2}}]
To begin with, we notice that a direct computation by using the definition of charged Hawking mass and Eq. \eqref{crusc_eq1} yields

\begin{eqnarray*}
			\mathfrak{M}_{CH}(\partial M)&=&\sqrt{\frac{|\partial M|}{16 \pi}}\left(\frac{1}{2}\chi(\partial M) - \frac{1}{16\pi}\int_{\partial M}(H^2 + \frac{4}{3}\Lambda)dA_g + \frac{4\pi}{|\partial M|}Q(\partial M)^2 \right)\\
			&=&\sqrt{\frac{|\partial M|}{16 \pi}}\left(1 - \frac{\Lambda}{12\pi}\vert\partial M\vert + \frac{4\pi}{|\partial M|}Q(\partial M)^2 \right)\\
			&\geq&\left( \frac{8\pi}{|\partial M|}Q(\partial M)^2 \right)\sqrt{\frac{|\partial M|}{16 \pi}},
		\end{eqnarray*}  which proves the first assertion. Moreover, the rigidity statement follows from Theorem \ref{cororo}.

Proceeding, we shall prove the second inequality. Indeed, by combining the definition of charged Hawking mass and Theorem \ref{teorigidezint1}, one obtains that
		\begin{eqnarray*}
			\mathfrak{M}_{CH}(\partial M)&=&\sqrt{\frac{|\partial M|}{16 \pi}}\left(\frac{1}{2}\chi(\partial M) - \frac{1}{16\pi}\int_{\partial M}(H^2 + \frac{4}{3}\Lambda)dA_g + \frac{4\pi}{|\partial M|}Q(\partial M)^2 \right)\\
			&=&\sqrt{\frac{|\partial M|}{16 \pi}}\left(1 - \frac{\Lambda}{12\pi}\vert\partial M\vert + \frac{4\pi}{|\partial M|}Q(\partial M)^2 \right)\\
			&\geq&\left( \frac{4\pi}{|\partial M|}Q(\partial M)^2 \right)\sqrt{\frac{|\partial M|}{16 \pi}},
		\end{eqnarray*} which gives the stated inequality. Furthermore, the rigidity statement follows from Theorem  \ref{teorigidezint1}. This finishes the proof of the theorem. 
		
	\end{proof}

	\noindent{\bf Conflict of Interest:} There is no conflict of interest to disclose.
	
	\

\noindent{\bf Data Availability:} Not applicable.
	
	\
	
\noindent{{\bf Acknowledgments.}} The authors would like to thank the referees for their careful reading and valuable suggestions. Allan Freitas wishes to thank IMECC/UNICAMP for the stimulating scientific environment provided during the development of part of this work. He is particularly grateful to Lino Grama for his warm hospitality and constant encouragement.


\begin{thebibliography}{1}
		
		\bibitem{alaee2023} A. Alaee, M, Khuri and S.-T. Yau: \emph{A quasi-local mass}. Commun. Math. Phys. 405 (2024) 111.
		
		\bibitem{ambrozio} L. Ambrozio: \emph{On static three-manifolds with positive scalar curvature.} J. Differ. Geom. 107 (2017) 1--45.
		
		\bibitem{baltazar2023} H. Baltazar, A. Barros and R. Batista: \emph{A local rigidity theorem for minimal two-spheres in charged time-symmetric initial data set.} Lett. Math. Phys. 113, 92 (2023).
		
\bibitem{BDR1} H. Baltazar, R. Di\'ogenes and E. Ribeiro Jr.: {\it Isoperimetric inequality and Weitzenb\"ock type formula for critical metrics of the volume}. Israel J. Math. 234 (2019) 309--329.

\bibitem{BLF} E. Barbosa, L. Lima and A. Freitas: {\it The generalized Pohozaev-Schoen identity and some geometric applications}. Commun. Anal. Geom. 28 (2020) 223--242.

\bibitem{BSilva} A. Barros and A. da Silva: {\it Rigidity for critical metrics of the volume functiona}l. Math. Nachr. 291 (2019) 709--719.

\bibitem{Barrow} J. Barrow, J. and D. Shaw: {\it The value of the cosmological constant}. Gen. Relativ. Gravit. 43 (2011) 2555--2560.

\bibitem{Batista} R. Batista, R. Di\'ogentes, M. Ranieri, and E. Ribeiro Jr.: {\it Critical metrics of the volume functional on compact three-manifolds with smooth boundary}. J. Geom. Anal. 27 (2017) 1530-1547.
		
		\bibitem{BMaz} S. Borghini and L. Mazzieri: {\it On the mass of static metrics with positive cosmological constant: II}. Commun. Math. Phys. 377 (2020) 2079--2158.
		
		\bibitem{bouch} W. Boucher: \emph{Cosmic no-hair theorems}. Classical general relativity (London, 1983), 43-52, Cambridge Univ. Press, Cambridge, 1984. 
		
		\bibitem{boucher} W. Boucher and G. Gibbons: \emph{ Cosmic baldness, in the very early universe.} (Cambridge, 1982), 273–278, Cambridge Univ. Press, Cambridge, 1983.
		
		\bibitem{boucher1} W. Boucher, G. Gibbons and G. Horowitz: \emph{ Uniqueness theorem for anti-de Sitter spacetime.} Phys. Rev. D (3) 30, no. 12 (1984) 2447--2451.
		
		\bibitem{brendle} S. Brendle, F. Marques and A. Neves: \emph{ Deformations of the hemisphere that increase scalar curvature}. Invent. Math. 185, no. 1 (2011) 175--197.

		\bibitem{cederbaum2016uniqueness} C. Cederbaum and G. Galloway: \emph{Uniqueness of photon spheres in electro-vacuum spacetimes}. Class. Quantum Grav. 33, no. 7 (2016) 075006.
		
\bibitem{chern1944} S.-S. Chern.: \emph{A Simple intrinsic proof of the Gauss--Bonnet formula for closed Riemannian manifolds}. Annals of Math. 45, no. 4 (1944) 747--752.
		
		\bibitem{christodoulou1986} D. Christodoulou and S.-T. Yau: \emph{Some remarks on the quasi-local mass}, in: Mathematics and General Relativity (Santa Cruz, CA, 1986), Vol. 71, pp. 9-14, Amer. Math. Soc., Providence, (1988).

\bibitem{Crus} P. Chru\'sciel: \emph{Remarks on rigidity of the de Sitter metric.} \href{https://homepage.univie.ac.at/piotr.chrusciel/papers/deSitter/deSitter2.pdf}{Chru\'sciel's homepage}.		
				
\bibitem{chrusciel1999} P. Chru\'sciel:  {\em Towards a classification of static electrovacuum spacetimes containing an asymptotically flat spacelike hypersurface with compact interior.} Class. Quantum Grav. 16, no. 3 (1999) 689–704.
		
		\bibitem{chrusciel2007} P. Chru\'sciel and P. Tod: \emph{The classification of static electro-vacuum spacetimes containing an asymptotically flat spacelike hypersurface with compact interior.} Commun. Math. Phys. 271, no. 3 (2007) 577--589.
		
		\bibitem{chrusciel2017non} P. Chru\'sciel and E. Delay: \emph{Non-singular space-times with a negative cosmological constant: II. Static solutions of the Einstein–Maxwell equations.} Lett. Math. Phys. 107, no. 8 (2017) 1391-1407.
		
\bibitem{CEM} J. Corvino, M. Eichmair and P. Miao: {\it Deformation of scalar curvature and volume}. Math. Annalen. 357 (2013) 551--584.
		
		\bibitem{costa} J. Costa, R. Di\'ogenes, N. Pinheiro and E. Ribeiro Jr.: \emph{Geometry of static perfect fluid space-time.} Class. Quantum Grav. 40 (2023) 205012.
		
		\bibitem{cfmr} G. Colombo, A. Freitas, L. Mari and M. Rigoli: \emph{Some splitting and rigidity results for sub-static spaces}. ArXiv:2412.05238 (2024).
		
		\bibitem{leandro2020} F. Coutinho, R. Di\'ogenes, B. Leandro and E. Ribeiro Jr.: \emph{Static perfect fluid space-time on compact manifolds}. Class. Quantum Grav. 37 (2020) 015003.
		

\bibitem{tiarlos} T. Cruz, V. Lima and A. de Sousa: \emph{Min-max minimal surfaces, horizons and electrostatic systems}. J. Differ. Geom. 128 (2024) 583--637.

\bibitem{ernani} R. Di\'ogenes; T. Gadelha and E. Ribeiro Jr.: \emph{Geometry of compact quasi-Einstein manifolds with boundary.} Manuscripta Math. 169 (2022), 167--183.

\bibitem{ernani2}  R. Di\'ogenes; J. Gonçalves and E. Ribeiro Jr.: \emph{Geometric inequalities for quasi-Einstein manifolds.} Annali di Matematica Pura ed Appl. (2025), https://doi.org/10.1007/s10231-025-01595-8.

\bibitem{khuri} M. Disconzi and M. Khuri: \emph{On the Penrose inequality for charged black holes.} Class. Quantum Grav. 29 (2012) 245019.

\bibitem{FY} Y. Fang and W. Yuan: {\it Brown-York mass and positive scalar curvature II: Besse’s conjecture and related problems}. Ann. Glob. Anal. Geom. 56 (2019) 1--15.

\bibitem{fried} H. Friedrich: \emph{Existence and structure of past asymptotically simple solutions of Einstein's field equations with positive cosmological constant.} J. Geom. Phys. 3, no. 1 (1986) 101-117. 

\bibitem{ghp} G. W. Gibbons, S. A. Hartnoll and C. N. Pope: \emph{Bohm and Einstein-Sasaki metrics, black
holes, and cosmological event horizons.} Phys. Rev. D. 67,  no. 8 (2003) 084024.

\bibitem{gover} A. R. Gover and B. Orsted: \emph{Universal principles for Kazdan-Warner and Pohozaev-Schoen type identities.} Commun. Contemp. Math. 15 (2013), no. 4, 1350002.

\bibitem{hayward} S. Hayward: \emph{Inequalities relating area, energy, surface gravity and charge of black holes.} Phys. Rev. Lett., 81 (1998), no. 21, 4557--4559.


\bibitem{HMR} O. Hijazi, S. Montiel and S. Raulot: {\it Uniqueness of the de Sitter spacetime among static vacua with positive cosmological constant}. Ann. Glob. Anal. Geom. 47 (2014) 167--178.

		
		\bibitem{huisken2001} G. Huisken and T. Ilmanen: \emph{The inverse mean curvature flow and the Riemannian Penrose inequality.} J. Differ. Geom. 59 (2001), no. 3, 353-437.
		
		 \bibitem{HY} S. Hwang and G. Yun: {\it Cosmic no-hair conjecture and conformal vector fields}. Math. Nachr. (2025) 1-14. DOI: 10.1002/mana.70025.
		 
		 
		 \bibitem{Jaracz} J. Jaracz: \textit{The penrose inequality and positive mass theorem with charge for manifolds with asymptotically cylindrical ends}. Ann. Henri Poincar\'e, 21 (2020) 2581--2609.

		
\bibitem{koba} O. Kobayashi: \emph{A differential equation arising from scalar curvature.} J. Math. Soc. Japan 34 (1982), 665.


\bibitem{MKhuri} M. Khuri, G. Weinstein and S. Yamada: \textit{Proof of the Riemannian Penrose inequality with charge for multiple black holes}. J. Differ. Geom. 106 (2017) 451--498.
		
		\bibitem{kunduri2018} H. Kunduri and J. Lucietti: {\em No static bubbling spacetimes in higher dimensional Einstein-Maxwell theory.} Class. Quantum Grav. 35 (5) (2018) 054003.
		
\bibitem{laf} J. Lafontaine:  \emph{Sur la géométrie d'une g\'en\'eralisation de l'equation diff\'erentielle d'Obata.} J. Math. Pures Appl. 9 (1983), 63.
		
		\bibitem{leandro2023} B. Leandro, M. Andrade and R. Lousa: \emph{On the geometry of electrovacuum spaces in higher dimensions.} Ann. Henri Poincar\'e. 24  (2023) 3153--3184.
		
		\bibitem{leandro2024} B. Leandro and R. Lousa: \emph{Electrostatic system with divergence-free Bach tensor and non-null cosmological constant}. Forum Math. 36, no. 3, (2024) 599--613.
		
		\bibitem{leandro2025} B. Leandro and G. Sabo: \emph{Sharp lower bound for the charged Hawking mass in the electrostatic space}. Math. Research Lett. 32, no. 3 (2025) 875-895.
		
\bibitem{lee} J. Lee.: \emph{Introduction to Smooth Manifolds}, 2nd edition. Graduate Texts in Mathematics, vol. 218, Springer (2012).

\bibitem{LiXia} J. Li and C. Xia: {\it An integral formula and its applications on sub-static manifolds}. J. Diff. Geom. 113(3) (2019) 493--518.		
		
		\bibitem{Lucietti} J. Lucietti: \emph{All higher-dimensional Majumdar–Papapetrou black holes.} Ann. Henri Poincar\'e. 22, no. 7 (2021) 2437–2450. 

		
		\bibitem{miao2015} P. Miao and L.-F. Tam: \emph{Evaluation of the ADM mass and center of mass via the Ricci tensor.} Proc. Amer. Math. Soc. 144 (2), (2015) 753--761.
	
\bibitem{miao2020}	P. Miao, Y. Wang and N. Xie: {\it On Hawking mass and Bartnik mass of CMC surfaces}. Math. Research Lett. 27 (2020) 855--885.

		
\bibitem{oneill} B. O'Neill: \emph{Semi-Riemannian Geometry with Applications to Relativity}, Academic Press (1983).		
		
		\bibitem{penrose} R. Penrose: \emph{Gravitational Collapse and Space-Time Singularities}. Phys. Rev. Lett.  14, 57 (1965).
		
		\bibitem{penrose1} R. Penrose: \emph{Some unsolved problems in classical general relativity.} Annals of Math. Studies, 102 (1982), 631--668.
		
		\bibitem{Jie} J. Qing: {\it On the uniqueness of AdS space-Time in higher dimensions}. Ann. Henri
Poincar\'e. 5 (2004) 245--260.
		
		\bibitem{qiu} G. Qiu and C. Xia: \emph{A generalization of Reilly’s formula and its applications to a new Heintze-Karcher type inequality.} Int. Math. Res. Not. IMRN. 17 (2015) 7608--7619.
		

\bibitem{reilly2} R. Reilly: \emph{Applications of the Hessian operator in a Riemannian manifold.} Indiana Univ. Math. J. 26 (1977), 459--472.		
				
		
		\bibitem{reilly} R. Reilly: \emph{Geometric applications of the solvability of Neumann problems on a Riemannian manifold.} Arch. Rational Mech. Anal. (1) 75 (1980) 23--29.

		\bibitem{Shi-Tam} Y. Shi and L.-F. Tam: {\it Positive mass theorem and the boundary behaviors of compact manifolds with nonnegative scalar curvature}. J. Differ. Geom. 62 (2002), no. 1, 79--125.
		
	
\bibitem{sh} Y. Shen: \emph{A note on Fischer-Marsden's conjecture.} Proc. Amer. Math. Soc. 125 (1997), 901--905.

\bibitem{schoen} R. Schoen: \emph{The existence of weak solutions with prescribed singular behavior for a conformally invariant scalar equation.} Comm. Pure Appl. Math. 41, no. 3, (1988) 317--392.
		
		\bibitem{wang} M.-T. Wang, Y.-K. Wang and X. Zhang: \emph{Minkowski formulae and Alexandrov theorems in spacetime.} J. Differ. Geom. 105 , no. 2 (2017) 249–290.
		
		\bibitem{yuan} W. Yuan: \emph{Brown-York mass and positive scalar curvature I - First eigenvalue problem and its applications.} Proc. Amer. Math. Soc. 151 (2023), 313--326.


\bibitem{Y2023} W. Yuan: {\it Volume comparison with respect to scalar curvature}. Analysis \& PDE, 16 (2023) 1--34.
		
	\end{thebibliography}
\end{document}